\def\pr{\text{ P\/}}
\def\ex{\text{E\/}}
\def\eps{\varepsilon}
\def\part{\partial}
\newtheorem{Theorem}{Theorem}[section]
\newtheorem{Lemma}[Theorem]{Lemma}
\newtheorem{Corollary}[Theorem]{Corollary}
\theoremstyle{remark}
\newtheorem*{Remark}{\bf Remark}
\numberwithin{equation}{section}
\title{\bf Birth of a giant $(k_1,k_2)$-core in the random digraph}
\author{Boris Pittel\\
\small Department of Mathematics\\[-0.8ex]
\small The Ohio State University\\[-0.8ex] 
\small Columbus, Ohio, U.S.A.\\
\small\tt bgp@math.osu.edu\\
\and
Daniel Poole \\
\small Department of Mathematics\\[-0.8ex]
\small The Ohio State University\\[-0.8ex]
\small Columbus, Ohio, U.S.A.\\
\small\tt poole@math.osu.edu
}
\begin{document}

\maketitle 

\begin{abstract} The $(k_1,k_2)$-core of a digraph is the largest sub-digraph with minimum
in-degree and minimum out-degree at least $k_1$ and $k_2$ respectively. For $\max\{k_1, k_2\} \geq 2$, we establish
existence of the threshold edge-density $c^*=c^*(k_1,k_2)$, such that the random digraph $D(n,m)$, on
the vertex set $[n]$ with $m$ edges, asymptotically almost surely has a giant $(k_1,k_2)$-core
if $m/n> c^*$, and has no $(k_1,k_2)$-core if $m/n<c^*$. Specifically, denoting  $\pr(\text{Poisson}(z)\ge k)$ by $p_k(z)$, we prove that $c^*=\min\limits_{z_1,z_2}\max\left\{\tfrac{z_1}{p_{k_1}(z_1)p_{k_2-1}(z_2)};
\tfrac{z_2}{p_{k_1-1}(z_1)p_{k_2}(z_2)}\right\}$.

\end{abstract}


\maketitle

\section{Main result and some prehistory} 

Let the fixed non-negative integers $k_1$, $k_2$ be such that $\max \{k_1, k_2\} \geq 2$. A $(k_1, k_2)$-core of a directed graph (digraph) $D=(V,E)$ on vertex set $V$ and set $E$ of directed edges is a maximal subdigraph with minimum in-degree and minimum out-degree at least $k_1$ and $k_2$ respectively. If a digraph does not have such a subdigraph, we say that the $(k_1, k_2)$-core is empty. 

We consider the random $D(n,m)$, the digraph chosen uniformly at random from all $\binom{(n)_2}{m}$ digraphs on $[n]$ with $m$ directed edges (arcs). As customary, for some $m=m(n)$, we say that some property holds {\it with high probability}, denoted {\it w.h.p.}, if the probability that $D(n,m)$ has this property tends to 1 as $n$ tends to infinity. We determine the sharp threshold for the existence of the $(k_1, k_2)$-core in $D(n,m)$. First some notations. Given $\text{Poi}(z)$, Poisson distributed random variable with parameter $z$, let $p_j(z):=\pr(\text{Poi}(z)\ge j)$. Introduce 
\[
c^*=c^*(k_1,k_2):=
\min\limits_{z_1,z_2>0}\max\left\{\frac{z_1}{p_{k_1}(z_1)p_{k_2-1}(z_2)};
\frac{z_2}{p_{k_1-1}(z_1)p_{k_2}(z_2)}\right\}.
\]
By symmetry, $c^*(k_1, k_2) = c^*(k_2, k_1)$. 
\begin{Theorem}\label{broadthm} $c^*$ is well defined, $c^*(0, k_2) = c^*(1, k_2) = \min \frac{z_2}{p_{k_2}(z_2)}$, and 
\begin{itemize}
\item for $c<c^* $, w.h.p. the $(k_1, k_2)$-core of $D(n, m=[c n])$ is empty;
\item for $c>c^* $, w.h.p. the $(k_1, k_2)$-core of $D(n, m=[c n])$ is not empty; in fact, there is some $\beta(c) = \beta(k_1, k_2, c)$ such that the $(k_1, k_2)$-core has $\beta n + O_p(n^{1/2} \log n)$ vertices. 
\end{itemize}
\end{Theorem}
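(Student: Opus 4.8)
\emph{Step 0: the cavity heuristic and the analytic facts about $c^*$.} Locally $D(n,[cn])$ resembles a Galton--Watson digraph in which each vertex has independent $\text{Poi}(c)$ in- and out-degrees, and the peeling procedure producing the $(k_1,k_2)$-core (iteratively discard any vertex with in-degree $<k_1$ or out-degree $<k_2$) becomes a message-passing recursion: call an out-stub of $u$ carried by the arc $u\to v$ \emph{alive} if $v$ has at least $k_1-1$ other alive in-stubs and at least $k_2$ alive out-stubs, and dually for in-stubs. If $z_1$ (resp.\ $z_2$) denotes the limiting expected number of alive in-stubs (resp.\ out-stubs) at a vertex, the Poisson size-bias identity turns these definitions into
\[
z_1=c\,p_{k_1}(z_1)\,p_{k_2-1}(z_2),\qquad z_2=c\,p_{k_1-1}(z_1)\,p_{k_2}(z_2),
\]
a vertex then lying in the core with limiting probability $\beta=p_{k_1}(z_1)p_{k_2}(z_2)$ for the \emph{largest} solution $(z_1,z_2)$. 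First I would carry out the purely real-analytic study of this system and of $(z_1,z_2)\mapsto\max\{z_1/(p_{k_1}(z_1)p_{k_2-1}(z_2)),\,z_2/(p_{k_1-1}(z_1)p_{k_2}(z_2))\}$: its behaviour as $z_1,z_2\to0$ or $\infty$ (whence the infimum defining $c^*$ is attained, in the interior when $k_1\ge2$ and after the reduction below when $k_1\le1$, so $c^*$ is well defined); the equivalence ``the system has a strictly positive solution $\iff c\ge c^*$''; and the reduction, when $k_1\in\{0,1\}$, where $p_{k_1}\equiv1$ or $p_{k_1-1}\equiv1$ and one may send $z_1\downarrow0$, to $c^*(0,k_2)=c^*(1,k_2)=\min_z z/p_{k_2}(z)$ --- the elementary inequality $p_{k_2}(z)\le z\,p_{k_2-1}(z)$ being what collapses the $\max$ onto the single term $z_2/p_{k_2}(z_2)$.

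\emph{Step 1: $c<c^*\Rightarrow$ empty core, by a first moment bound.} Since the $(k_1,k_2)$-core is the unique maximal subdigraph meeting both degree constraints, it is nonempty iff \emph{some} nonempty $S\subseteq[n]$ has $D(n,m)[S]$ with minimum in-degree $\ge k_1$ and minimum out-degree $\ge k_2$. Writing $N_{k_1,k_2}(j,i)$ for the number of simple digraphs on $[j]$ with $i$ arcs obeying both constraints, and using that the number of internal arcs is hypergeometric (its binomial $\binom{(j)_2}{i}$ cancelling the one from choosing the induced digraph),
\[
\pr(\text{core}\neq\emptyset)\ \le\ \sum_{j\ge1}\binom nj\sum_{i}\frac{\binom{(n)_2-(j)_2}{m-i}}{\binom{(n)_2}{m}}\,N_{k_1,k_2}(j,i).
\]
By the configuration-model representation, $N_{k_1,k_2}(j,i)\le i!\,[x^i]\phi_{k_2}(x)^j\,[x^i]\phi_{k_1}(x)^j$ with $\phi_k(x)=\sum_{a\ge k}x^a/a!$. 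Feeding in the local-limit asymptotics for the coefficient extractions and the hypergeometric ratio, and applying Laplace's method with $j=\gamma n$, $i=\delta n$, the sum is $\exp(n(F(\gamma_\star,\delta_\star)+o(1)))$ for a saddle point; the Lagrange stationarity conditions for $F$ coincide with the fixed-point system of Step~0, and $F(\gamma_\star,\delta_\star)<0$ exactly when $c<c^*$. The bounded-$j$ range contributes $o(1)$ because $\max\{k_1,k_2\}\ge2$ rules out tiny dense cores.

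\emph{Step 2: $c>c^*\Rightarrow$ a core of $\beta n+O_p(n^{1/2}\log n)$ vertices.} Run the peeling as an explicit arc-revealing algorithm and analyze it by Wormald's differential-equation method (equivalently, by the urn/death-process reformulation). The natural statistics are the number of surviving arcs together with the profile $\{Y_{a,b}\}$ of surviving vertices of in-degree $a$, out-degree $b$; since all degrees are $O(\log n)$ w.h.p., truncating at degree $O(\log n)$ leaves only $O(\log^2 n)$ coordinates. Their smooth expected one-step increments yield an ODE system whose stationary point is precisely the largest fixed point $(z_1,z_2)$ of Step~0; for $c>c^*$ the solution reaches it while a strictly positive fraction of vertices is still present --- i.e.\ the process halts before the vertex set is exhausted, leaving $\beta n+o(n)$ vertices, $\beta=p_{k_1}(z_1)p_{k_2}(z_2)$. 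The martingale concentration supplied by the method, combined with the $O(\log n)$ degree bound and a union bound over the $O(\log^2 n)$ coordinates and $O(n)$ steps, upgrades this to the stated $O_p(n^{1/2}\log n)$ error.

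\emph{Where the difficulty lies.} The delicate half is Step~2: the first moment only forbids cores below $c^*$, so for $c>c^*$ one must genuinely show the peeling \emph{stops early} --- that the algorithmic trajectory cannot drift off the ODE-predicted curve and collapse to the empty digraph, and that the largest, rather than the trivial, fixed point governs the output --- all while keeping simultaneous control of the truncated, yet growing, family of degree classes tightly enough to land the $O_p(n^{1/2}\log n)$ fluctuation. A cheaper route to mere nonemptiness is a second-moment estimate for the number of valid vertex sets of the critical size $\gamma_\star n$, but there the obstacle becomes showing such sets cluster only as much as the true core permits, i.e.\ that the pair correlations are negligible precisely for $c>c^*$.
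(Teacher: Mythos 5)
Your outline shares the paper's overall philosophy — analyze the peeling algorithm via the differential-equation method, expect the fixed-point system to yield $c^*$ and $\beta$ — but it diverges in the subcritical half and, more importantly, substantially under-specifies the supercritical half, missing the ideas that make the argument actually close.

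For the supercritical case, you propose tracking the full degree profile $\{Y_{a,b}\}$ truncated at degree $O(\log n)$, hence $O(\log^2n)$ coordinates, and then invoking Wormald's theorem. This is not how the paper proceeds, and it would run into trouble: Wormald's theorem as used requires a \emph{fixed} number of smoothly-evolving coordinates, and degree classes that only become nonempty midway through the process would spoil the smoothness/boundedness hypotheses. The paper's key structural observation is that one need only track vertices with \emph{light} in-degree or out-degree, namely the $(k_1+1)(k_2+1)$ counts $v_{a,b}$, $v_{a,\bullet}$, $v_{\bullet,b}$, $v$ (plus the edge count $\mu$); all heavy vertices are lumped together, and their degree distribution is handled through the truncated-Poisson conditional law furnished by the directed Chv\'atal sequence model. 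Moreover, the ODE system collapses further onto six coordinates $(v^i,v^o,\mu_i,\mu_o,v,\mu)$, and the paper finds two explicit conserved quantities $z_iz_o/\mu$ and $p_{k_1}(z_i)p_{k_2}(z_o)/v$ along trajectories. These conservation laws are what give a clean criterion for the ODE to terminate with $v>0$, and the Perron--Frobenius analysis of the terminal Jacobian $H(z_i,z_o)$ is what shows the infimum defining $c^*$ is attained exactly at the critical trajectory. Without these devices you have no mechanism for proving the trajectory stops early, which is exactly the step you yourself flag as delicate. The residual tail near termination (where the $O(\log^6 n)$ discrepancy from Wormald's theorem could still leave some light vertices) is handled by a separate exponential super-martingale, which you also omit.

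For the subcritical case, your proposed first-moment bound over all candidate core sets is a genuinely different route from the paper, which instead runs the \emph{same} deletion process, observes from the conserved quantities that $\mu(t)$ and $v(t)$ tend to zero, applies Wormald to concentrate the random process around the ODE solution, and then invokes a directed analogue of \L uczak's theorem to exclude cores below a fixed $\alpha(k,c)n$. Your route is plausible in spirit (this is roughly how first-moment core bounds go for $k$-cores), but your saddle-point / Laplace-method claim, that $F(\gamma_\star,\delta_\star)<0$ precisely for $c<c^*$ and that the stationarity conditions coincide with the peeling fixed point, is asserted rather than checked, and verifying it is as much work as the paper's approach. The paper's route has the advantage of reusing the machinery already built for the supercritical case. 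Finally, your Step 0 is explicitly heuristic; the rigorous real-analytic study of the fixed-point system (including that the min-max over $(z_1,z_2)$ is attained, the degenerate $k_1\in\{0,1\}$ reduction, and the identification of the attained minimum with the critical termination trajectory) is done in the paper via Lemmas 3.6 and 3.7 and would need to be supplied.
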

For $c\downarrow c^*$, the in-degree and the out-degree of a generic vertex in the 
$(k_1,k_2)$-core are asymptotically independent, and distributed as $\text{Poi}(z_1^*)$ and $\text{Poi}(z_2^*)$, conditioned on $\{\text{Poi}(z_1^*)\ge k_1\}$ and $\{\text{Poi}(z_2^*)\ge k_2\}$ respectively. Furthermore, $\beta(c^*+)=p_{k_1}(z_i^*)\,p_{k_2}(z_o^*)$.

\begin{Remark}
The definition of $c^*(k_1, k_2)$ as the minimum of the maximum of two explicitly defined functions allows the interested reader to numerically determine $c^*$ for moderate sized $k_1$ and $k_2$. For instance, in the following table are numerical approximations for $c^*$ for $k_1, k_2 \leq 4$. 
\begin{center}
\begin{tabular}{|c|c|c|c|c|}
\hline
$k_1 \backslash k_2$ & 1 & 2 & 3 & 4 \\
\hline
$1$  &  n.a. &  3.351 & 5.148  &  6.799 \\
\hline
$2$  &  3.351 &  3.817 & 5.235  &  6.820 \\
\hline
$3$  &  5.148  &  5.235 &  5.768  &  6.971 \\
\hline
$4$  & 6.799  &   6.820 &  6.971 &  7.526  \\
\hline
\end{tabular}
\end{center}
\end{Remark}

\begin{Remark}
Though not immediately obvious from the definition of $c^*(k_1, k_2)$, we have that for $k \geq 2,$
\begin{equation*}
c^*(0, k) = c^*(1, k) < c^*(2, k) < \ldots < c^*(k,k).
\end{equation*}
A close look at the formula for $c^*(k_1, k_2)$ show that as $k \to \infty$, 
\begin{equation*}
c^*(k, k) = k + \sqrt{k \log \tfrac{k e^2}{2\pi}} - 1 + O\left( \sqrt{ k^{-1} \log k} \right),
\end{equation*}
and
\begin{equation*}
c^*(0, k) = k + \sqrt{k \log \tfrac{k}{2\pi}} - 1 + O\left( \sqrt{ k^{-1}/ \log k} \right),
\end{equation*}
so that $c^*(k,k)-c^*(0,k) \sim \sqrt{k / \log k}$.

\end{Remark}

In Figure \ref{fig: corepicture}, we produce a randomly sampled digraph with 50 vertices and 170 arcs, which corresponds to an arc density $c=3.4$ slightly above $c^*(1, 2) \approx 3.351$. For the empirical probability that a $(1,2)$-core exists, even 100 vertices does not give the small probabilities that we want. For 10,000 trials of $n=100, m=300$, the fraction of digraphs with a nontrivial $(1,2)$-core is roughly 27\%; for 10,000 trials of $n=100, m=350$, the fraction of digraphs with a nontrivial $(1,2)$-core is roughly 95\%. If we jump up an order, these probabilities get closer to what we want them to be. For 5,000 trials of $n=1000, m=3000$, zero of these digraphs had a $(1,2)$-core and for 5,000 trials of $n=1000, m=3500$, all but 17 had a $(1,2)$-core. Further, 5,000 trials of $n=1000$ and $m=3100, 3200, 3300, 3400$, the fraction of such digraphs with a $(1,2)$-core was 0.2\%, 9\%, 50\%, and 91\%, respectively. 

\begin{figure}[!h]
  \begin{center}
\scalebox{1}{\includegraphics{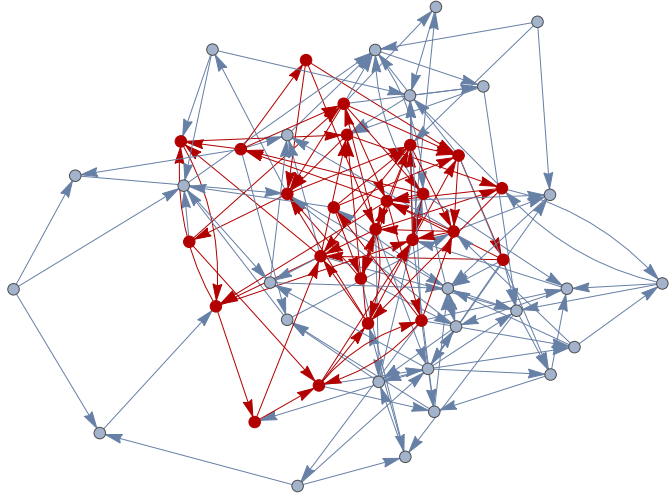}}
  \end{center}
  \caption{\label{fig: corepicture} (1,2)-core for $c=3.4$, $n=50$}
\end{figure}

\subsection{Connections to prior work}

The $k$-core has been well studied in the random undirected graph $G(n,m)$. In the pioneering paper \cite{Bol2}  Bollob\'{a}s defined the $k$-core of a graph to be the maximal subgraph with minimum degree at least $k$, and proved that for $8 \leq k+3 \leq c/n, c \geq 67$ with high probability (w.h.p.) the $k$-core of $G(n,m =cn/2)$ is non-empty and $k$-connected.  
 This breakthrough result opened a new area of analysis of the Erd\H os-R\'enyi random graph
process in the {\it postcritical\/} phase, i.e. beyond formation of a giant connected component, but long before the random graph becomes connected.
Later, T. \L uczak \cite{Luc1} proved that, for $k \geq 3$, w.h.p. the $k$-core of $G(n,m)$, when present, has at least $0.0002n$ vertices. Pittel, Spencer and Wormald \cite{PitSpeWor} introduced a randomized deletion algorithm which terminates with the $k$-core, and by analyzing the likely realization of this algorithm, for $k \geq 3$ they established the sharp threshold $c^*(k)$ for w.h.p. existence of the $k$-core, proving  that  $c^*(k)=\min\frac{z}{p_{k-1}(z)}$. (Thus $c^*(0,k_2)=c^*(1, k_2)=c^*(k_2+1)$.)  It was also proved in \cite{PitSpeWor} that the transition window for the edge density $c$ has width of order $O(n^{-\delta})$ for every $\delta<1/2$. 
More recently, Janson and M. \L uczak \cite{JanLuc} proved that the window width is of order
$n^{-1/2}$ exactly, and established a normal law for the size of the $k$-core in the supercritical phase, and a ``non-normal'' law at the threshold.

The interested reader will find in \cite{PitSpeWor} an informal 
explanation of the formula for $c^*(k)$ based on a deletion algorithm that at each round discards
all vertices of degree below $k$, and on the fact that, locally, the neighborhood of a generic vertex
is asymptotic to the first  few generations of the Galton-Watson branching process 
with Poi$(c)$-distributed immediate progeny. Later Molloy
\cite{Mol} and Riordan \cite{Rio} found proofs of the threshold $c^*(k)$, both based on this approach, with \cite{Mol} covering a general case of hypergraphs. The argument in \cite{Rio} uses a powerful (local) coupling of the graph $G(n, m)$ to the Galton-Watson process. There is a similar,
considerably more ``hand-waving'', explanation of how the parameters $z_1$ and $z_2$ enter
the stage, but it stops well short of a formula for $c^*(k_1,k_2)$. And  finding a
satisfying formal proof 
as conceptually transparent, and inherently simple, as our proof below seems a daunting task. To be sure, like
\cite{PitSpeWor}, and Aronson, Frieze, Pittel \cite{AroFriPit}, our argument will also be based on analysis of a randomized deletion algorithm, but with steps being much less radical:  each time
a {\it single\/}, uniformly random, ``light'' vertex is  deleted together with all edges
incident to it.  For this slowed-down deletion algorithm we can use the deterministic ODE system
as a potential approximation of the random work of the algorithm, a powerful approach
to random graphs championed by Wormald for many years.

This paper is closely related to (1) Pittel and Poole \cite{PitPoo}, in which we proved a normal, joint,
law for the size and the number of edges in the giant strong component of $D(n,m=[cn])$, $c>1$;
(2) two mutually complementary papers, P\'erez-Gim\'enez and  Wormald \cite{PerWor}, and Pittel \cite{Pit1} on asymptotic counting of strongly connected digraphs; (3) T. \L uczak and Seierstad \cite{LucSei} on the size of the strong giant for $c-1\gg n^{-1/3}$. Our earlier sources
of inspiration are Karp \cite{Kar1} and T. \L uczak \cite{Luc2} both on $c=1$ as the sharp threshold for
birth of the strong giant, and in particular, Karp's asymptotic formula $n\theta(c)^2$ ($\theta=1-e^{-c\theta}$) for the likely size of the giant for $c>1$.

\subsection{Outline for the proof}

First we introduce a randomized deletion algorithm that delivers  the, possibly empty, $(k_1, k_2)$-core of a given digraph.

To analyze its work on $D(n,m)$, in Section 2.1 we introduce a {\it directed\/} version of a random sequence model, originally invented by Chv\'atal \cite{Chv} for analysis of the $3$-colorability of the sparse
Erd\H os-R\'enyi random graph $G(n,m)$. The Chv\'atal model was instrumental in
\cite{AroFriPit} for a sharp analysis of the Karp-Sipser greedy matching algorithm in $G(n,m)$. 
The directed version enables us to view the deletion algorithm as a Markov process on
the set of the $(k_1+1)(k_2+1)$-tuples, whose components are counts of vertices of various,
relevant, in/out-degrees, with an additional component recording  the current number of edges.

In Section 2.2, we derive the asymptotic formulas for the expected, conditional, change of the
current $(k_1+1)(k_2+1)$-tuple. The computations are necessarily technical, but are conceptually simple, as  our approach does not require knowledge of the conditional distribution of that change.   
We like to think of this part as exploring the probabilistic infrastructure of the deletion algorithm.
The resulting list of $(k_1+1)(k_2+1)$ identities does look intimidating, but it leads to a system of 
 identities for changes of a leading subset of just $6$ parameters, no matter how large $k_1$
and $k_2$ are. To be sure this subsystem is not exactly closed, as it contains the current number
of non-isolated light vertices, not expressible through those $6$ parameters. 

In Section 3 we use these expected changes as a motivation for putting forth a deterministic
ordinary differential equation (ODE) system, of $(k_1+1)(k_2+1)$ equations, anticipating that its solution will be a sharp approximation of the random realization of the deletion process. 

In Section 3.1 we identify two integrals, i.e. two functions of those $6$ leading parameters,
that remain constant along the ODE trajectories, in a promising harmony with the integrals
in Pittel and Poole \cite{PitPoo} paper on the distribution of the giant $(1,1)$-core in $D(n,m)$,
 \cite{PitSpeWor} ($k$-core in $G(n,m)$),  and \cite{AroFriPit} (maximum matching in $G(n,m)$).

In Section 3.2 we establish a condition (Lemma \ref{1,2}) sufficient for the ODE trajectory to
terminate at a finite time.

In Section 3.3 we prove existence of the threshold density $c^*=c^*(k_1,k_2)$ for the finite-time termination of the ODE trajectory with initial conditions close to those for $D(n,m=[cn])$.

In Section 3.4 we translate the conditions for $c^*$ into a ``variational principle'',
Lemma  \ref{c*,genk1,k2}.

Finally, in Section 4 we use a general purpose theorem, due to Wormald \cite{Wor1}, and
a probabilistic counterpart of the argument in Section 3.3, to show that $c^*$ is the edge
density threshold for a giant $(k_1,k_2)$-core in $D(n,m=[cn])$.

\section{Deletion algorithm}  We begin with description of a deletion algorithm that delivers the $(k_1,k_2)$-core of a given digraph $D$. We  call a vertex  of a digraph light  if either
its in-degree is at most $k_1-1$, or its out-degree is at most $k_2-1$.  Set $D_0=D$. Recursively, in a current digraph $D_t$  we choose uniformly at random 
a non-isolated light vertex and delete all the edges incident to this vertex. This procedure determines the next digraph $D_{t+1}$.
The terminal digraph is the (possibly empty) $(k_1,k_2)$-core of the digraph $D$ complemented by a set of isolated vertices.

\subsection{Deletion process as a Markov chain on finite tuples} Our task is to analyze the likely behavior of this algorithm applied to the random digraph $D(n,m)$. We notice upfront that, for $k:=\max\{k_1, k_2\} \geq 2$, with probability $1-O\left(n^{-k+3/2}\right)$ the $(k_1, k_2)$-core of $D(n, m=[cn])$ is either empty, or of size $0.9 \, \alpha (k, c) \, n$ at least, where
\begin{equation}\label{akc bound}
\alpha(k,c) := \min \left \{ \left(\frac{e^{k+1} \, c^k}{k^k} \right)^{\frac{1}{k-1}} , \frac{k}{c \, e} \right \}.
\end{equation}
This claim is a directed counterpart of a well-known result of \L uczak \cite{Luc1} for the $k$-core in the random graph $G(n, p=c/n), k \geq 3$. Similarly to the argument in that paper, the proof is based on the asymptotic estimate of the expected number of sub-digraphs with the number of edges at least $k$ times the number of vertices. Thus we may and will stop the deletion process once the total number of vertices with in-degree and out-degree exceeding $k_1$ and $k_2$ respectively, drops below, says, $0.8 \alpha(k, c)n$: if continued, w.h.p. the deletion process will end with the empty digraph.\\

Now down to brass tacks. To handle the considerable technical details, we introduce an auxiliary  random sequence model. This model is a directed version of the Chv\'atal random sequence model \cite{Chv}, which was already used by Aronson, Frieze and Pittel \cite{AroFriPit} for analysis of a vertex deletion process at the heart of the Karp-Sipser greedy matching algorithm \cite{KarSip}.

Given a sequence $\bold x=(x_1,\dots,x_{2m})$, $x_i\in [n]$, we define a {\it multi\/}-digraph
$D_{\bold x}$ with vertex set $[n]$ and (directed) edge set $\bigl[\{x_{2r-1},x_{2r}\}:\,1\le r\le m\bigr]$;
thus $e_{\bold x}(i,j)$, the number of directed edges $i\to j$, is $|\{r: x_{2r-1}=i,x_{2r}=j\}$. In essence,
$\bold x$ is a full, $m$-long record of throwing in $m$ directed edges into the initially empty digraph, one edge
at a time with loops and parallel edges allowed. The in-degree sequence 
$\boldsymbol\delta_{\bold x}$
and the out-degree sequence $\boldsymbol\Delta_{\bold x}$ of $D_{\bold x}$  are given by
\[
\delta_{\bold x}(i)=|\{r: x_{2r}=i\}|,\quad \Delta_{\bold x}(i)=|\{r:x_{2r-1}=i\}|,\quad 1\le i\le n,
\]
so that 
\[
\sum_{I\in [n]}\delta_{\bold x}(i)= \sum_{I\in [n]}\Delta_{\bold x}(i)=m.
\]
If $\bold x$ is distributed uniformly on the set $[n]^{2m}$ then $D_{\bold x}$ can serve as a ``surrogate''
for $D(n,m)$, meaning the following.  First of all, conditioned on the event ``$D_{\bold x}\text{ is simple}$'' (i.e. no loops or parallel edges), $D_{\bold x}$ is distributed as $D(n,m)$. Second, for $m/n\to c<\infty$,
\begin{equation*}
\pr(D_{\bold x}\text{ is simple})=\binom{(n)_2}{m}\cdot \frac{m!}{n^{2m}}\to e^{-c-c^2/2}>0.
\end{equation*}
Thus uniformly over all events  $A$ 
\begin{equation}\label{4}
\pr(D(n,m)\in A) =O\bigl(\!\!\pr(D_{\bold x}\in A)\bigr).
\end{equation}
Therefore to show that an event $A(n,m)$ is unlikely for $D(n,m)$ it suffices to prove that $A(n,m)$ is unlikely for  $D_{\bold x}$. And $D_{\bold x}$ is incomparably easier to deal with. We can view
the random sequence $\bold x$ as a full record of throwing  {\it alternately\/} $m$  ``out-balls'' and $m$ ``in-balls'' into $n$ boxes {\it one ball at a time\/} (starting with an out-ball) independently of one other. So $x_{2r-1}=i$, $x_{2r}=j$ means that  the $r$-th out-ball and the $r$-th in-ball went into the box $i$ and the box $j$ respectively, signifying birth of a directed edge $e_{\bold x}(i,j)$ from vertex $i$ to vertex $j$.  So the in-degree sequence $\boldsymbol\delta_{\bold x}=\{\delta_{\bold x}
(j)\}_{j\in [n]}$ (out-degree sequence $\boldsymbol\Delta_{\bold x}=\{\Delta_{\bold x}(i)\}_{i\in [n]}$ resp.) is the collection of 
``in-occupancy'' (``out-occupancy'') numbers for the $n$ boxes representing the vertices. 
In particular, $\boldsymbol\delta_{\bold x}$ and $\boldsymbol\Delta_{\bold x}$  are mutually independent, each distributed multinomially, with  $m$ trials and $n$ equally likely outcomes in each trial, a property crucial for analysis of the deletion algorithm. 

Let us describe one step of the deletion algorithm applied to a multi-digraph $D_{\bold x}$ in terms of the underlying sequence $\bold x$. This algorithm delivers a sequence
$\{\bold x(t)\}$ where $\bold x(0)=\bold x$, and each $\bold x(t)\in M:=([n]\cup \{\star\})^{2m}$, where for all $r$, $x_{2r-1}(t)=\star$ if and only if $x_{2r}(t)=\star$. The $\star$ pairs mark the 
locations $(2r-1,2r)$ in the original $\bold x(0)$ whose vertex occupants have been deleted after
$t$ steps. 

Recursively, at step $t+1$  we  (1)  select a vertex $i$ uniformly at random
among all non-isolated light vertices $j$, i.e. those with either $\delta_{\bold x}(j)<k_1$ or $\Delta_{\bold x}(j)<k_2$; (2) identify all the pairs $x_{2r-1},x_{2r}$ such that at least one
of the occupants $x_{2r-1}$, $x_{2r}$ is $i$, and replace each such pair with the symbol $\{\star,\star\}$, to get $\bold x(t+1)$. 

Given a generic $\bold x\in M$, define 
\[
\mathcal S_{\bold x}=\bigl(\{V_{a,b}(\bold x)\}, \{V_{a,\bullet}(\bold x)\}, \{V_{\bullet,b}(\bold
x)\}, V(\bold x), 
\mu(\bold x)\bigr);
\]
here $a<k_1$, $b<k_2$, $V_{a,b}(\bold x)$ is the set of all (doubly) light vertices, in $\bold x$, with in-degree $a$ and out-degree $b$; $V_{a,\bullet}(\bold x)$ is the set of all
(semi) light vertices of in-degree $a$ and out-degree $\ge k_2$; $V_{\bullet,b}(\bold x)$ is the set of all
(semi) light vertices of in-degree $\ge k_1$ and out-degree $b$;  $V(\bold x)$ is the set of all (doubly) heavy vertices, i.e. with in-degree $\ge k_1$ and out-degree $\ge k_2$; $\mu(\bold x)$ is the
number of  non-star pairs in $\bold x$. Everywhere below the subindeces (superscripts)  ``$a$''
(``$b$''  resp.) will also denote a generic value of light in-degree (light out-degree resp.).  Set 
\[
\bold s_{\bold x}= \bigl(\{v_{a,b}(\bold x)\}, \{v_{a,\bullet}(\bold x)\}, \{v_{\bullet,b}(\bold x)\}, v(\bold x), 
\mu(\bold x)\bigr),
\]
with the first four components being  cardinalities of the set components of $\mathcal S_{\bold x}$.
The sequence 
$\{\bold x(t)\}$ determines the sequence $\{\bold s(t)\} :=\{\bold s_{\bold x(t)}\}$. For brevity,
we will write $v_{a,b}(t),\dots,\mu(t)$ instead of $v_{a,b}(\bold x(t)),\dots, \mu(\bold x(t))$.
We want to show that $\{\bold s(t)\}$ is a Markov chain.\\

The following two claims are ``directed'' counterparts of Lemma 2 and Lemma 3 in \cite{AroFriPit}.
\begin{Lemma}\label{x(t)uniform} Given $\bold s$, let $M_{\bold s}=\{\bold x: \bold s_{\bold x}=\bold s\}$. Suppose that $\bold x(0)$ is distributed uniformly on  $M_{\bold s(0)}$. Then, for all 
$t\ge 0$, conditioned on $\bold s(0),\,\bold s(1),$ $\dots,\bold s(t)$, the sequence $\bold x(t)$ is distributed uniformly on $M_{\bold s(t)}$.
\end{Lemma}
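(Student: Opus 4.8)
The plan is to argue by induction on $t$, the base case $t=0$ being exactly the hypothesis. (We may assume $k_1,k_2\ge 1$: when $\min\{k_1,k_2\}=0$ the $(k_1,k_2)$-core threshold reduces to that of an ordinary $k$-core, cf.\ Theorem~\ref{broadthm}.) Write $\mathrm{del}(i,\bold x)$ for the sequence obtained from $\bold x$ by replacing with $(\star,\star)$ every coordinate-pair containing $i$, so one step of the algorithm sends $\bold x(t)$ to $\mathrm{del}(i,\bold x(t))$ for a uniformly random non-isolated light vertex $i$. For the inductive step, fix a history $\mathbf h=(\bold s(0),\dots,\bold s(t))$ and a state $\bold s'$ that can follow $\bold s(t)$ (the case where $\bold x(t)$ has no non-isolated light vertex is terminal and trivial). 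The number $L(\bold s(t))$ of non-isolated light vertices depends on $\bold s(t)$ alone, and $\bold x(t)$ is uniform on $M_{\bold s(t)}$ given $\mathbf h$ by the inductive hypothesis; hence, letting $C(\bold y)$ denote the number of pairs $(\bold x,i)$ with $\bold x\in M_{\bold s(t)}$, $i$ a non-isolated light vertex of $\bold x$, and $\mathrm{del}(i,\bold x)=\bold y$, a one-line computation gives, for every $\bold y$,
\[
\pr\bigl(\bold x(t+1)=\bold y\mid\mathbf h\bigr)=\frac{C(\bold y)}{|M_{\bold s(t)}|\,L(\bold s(t))},
\]
and therefore, summing over $\bold y\in M_{\bold s'}$ to get $\pr(\bold s(t+1)=\bold s'\mid\mathbf h)$,
\[
\pr\bigl(\bold x(t+1)=\bold y\mid\mathbf h,\ \bold s(t+1)=\bold s'\bigr)=\frac{C(\bold y)}{\sum_{\bold y'\in M_{\bold s'}}C(\bold y')},\qquad \bold y\in M_{\bold s'}.
\]
So the lemma is equivalent to: $C(\bold y)$ takes the same value for every $\bold y\in M_{\bold s'}$.

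I would prove this in two steps. First, a transposition argument: if $(\bold x,i)$ is counted by $C(\bold y)$ then $i$ is isolated in $\bold y$ (deletion isolates $i$), and for any two isolated vertices $i,i'$ of $\bold y$ the swap $\sigma=(i\,i')\in S_n$ fixes $\bold y$, maps $M_{\bold s(t)}$ onto itself, and satisfies $\mathrm{del}(\sigma i,\sigma\bold x)=\sigma\,\mathrm{del}(i,\bold x)$; hence $\bold x\mapsto\sigma\bold x$ bijects the restorations of $\bold y$ through $i$ onto those through $i'$. Consequently $C(\bold y)=\iota'\cdot R(\bold y)$, where $\iota'$ is the number of isolated vertices of $\bold y$ (this equals the $V_{0,0}$-count recorded in $\bold s'$, since $k_1,k_2\ge1$) and $R(\bold y)$ counts the $\bold x\in M_{\bold s(t)}$ with $\mathrm{del}(i_0,\bold x)=\bold y$ for one fixed isolated vertex $i_0$ of $\bold y$. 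It remains to show $R(\bold y)$ depends on $\bold s(t),\bold s'$ only.

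Second, an enumeration. Such an $\bold x$ is obtained from $\bold y$ by selecting $\mu(\bold s(t))-\mu(\bold s')$ of the star-pairs of $\bold y$, un-starring them, and filling each with an ordered vertex-pair one of whose entries is $i_0$, so that $\bold s_{\bold x}=\bold s(t)$. I would group these restorations by a type $\tau$ recording the in- and out-degree acquired by $i_0$, the number of loops created at $i_0$, and, for each bucket present in $\bold s'$, how many of $i_0$'s new in-/out-neighbours lie in it and with what multiplicities; the admissible types are exactly those whose net effect carries the bucket-vector and $\mu$ of $\bold s'$ to those of $\bold s(t)$, a condition on the two states alone. For a fixed admissible $\tau$ the number of restorations factors as $\binom{m-\mu(\bold s')}{\mu(\bold s(t))-\mu(\bold s')}$ (which star-pairs to revive), times ordering factors for the revived coordinates, times a product of binomial coefficients choosing which vertices of each bucket become neighbours of $i_0$ — every factor a polynomial in the entries of $\bold s'$. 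The point that makes this go through, and the reason the claim survives even though $M_{\bold s'}$ splits into several $S_n\times S_m$-orbits, is that attaching extra edges to an already-heavy vertex never changes its bucket, so the differing exact degrees of the heavy vertices inside $M_{\bold s'}$ are invisible to the count. Summing over $\tau$ exhibits $R(\bold y)$ as a function of $(\bold s(t),\bold s')$, closing the induction.

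The only genuinely laborious part is this last enumeration: pinning down the admissible types and their exact weights while correctly handling loops at $i_0$, several parallel edges joining $i_0$ to a single neighbour, and light neighbours whose in- or out-degree crosses $k_1$ or $k_2$ upon restoration. Everything else — the induction, the Bayes step, and the exchangeability of the isolated vertices — is routine, and this is the directed analogue of the corresponding argument in \cite{AroFriPit}.
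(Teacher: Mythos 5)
Your proof is correct, and its skeleton coincides with the paper's: induction on $t$, a Bayes-type computation reducing the claim to the constancy of the restoration count over $M_{\bold s'}$, and a combinatorial argument establishing that constancy. Where you diverge is in that last ingredient. The paper asserts ``clearly'' that, for a fixed $\bold x'\in M_{\bold s'}$, the number of ways to undo the deletion depends only on the partition $\bold V'$ (and the target tuple), and then applies a general $S_n$-permutation to pass from one partition to another with the same block sizes; the bucket-versus-exact-degree observation is left implicit inside that ``clearly''. You use the $S_n$-action only in the small form of swapping isolated vertices, which cleanly separates the choice of the restored vertex $i_0$ from the rest, and then you propose a direct enumeration by restoration type to show that $R(\bold y)$ is a function of the two tuples alone. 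This makes explicit exactly the observation the paper suppresses — that once a vertex is heavy, further incident edges do not move it between buckets, so the exact degrees of heavy vertices inside $M_{\bold s'}$ do not enter the count — and it replaces the blanket permutation step with a concrete count. Both routes work; yours is more hands-on, and the residual bookkeeping you flag (loops at $i_0$, parallel edges, threshold crossings) is precisely the content the paper compresses into its one-word ``clearly''. One tiny caveat: your opening reduction to $k_1,k_2\ge1$ is harmless here, but the justification you give (the threshold reduces to the ordinary $k$-core) is about Theorem~\ref{broadthm}, not about this lemma; the cleaner reason is simply that the $V_{0,0}$-bucket, which records the isolated vertices, is part of $\bold s$ exactly when $k_1,k_2\ge1$, matching the paper's implicit convention. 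Also, when you define $R(\bold y)$, you should carry along the requirement that $i_0$ be light in $\bold x$, as it is in your definition of $C(\bold y)$; otherwise the identity $C(\bold y)=\iota'\cdot R(\bold y)$ is off by the (possibly empty) restorations in which $i_0$ ends up doubly heavy.
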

\begin{proof} We prove this by induction on $t$. It is true for $t=0$.  Indeed $\bold x(0)=\bold x$
is distributed uniformly on $[n]^{2m}$. Therefore, conditioned on  $\bold s(0)$, $\bold x(0)$  is uniformly distributed on $M_{\bold s(0)}\subseteq [n]^{2m}$. Suppose the claim holds for
some $t\ge 0$. Let us prove the induction step. \\

{\bf (1)\/} First we show that for generic $\bold s$, $\bold s^\prime$,  each $\bold x^\prime\in M_{\bold s^\prime}$ arises by an admissible transition of the edge-deletion
algorithm from the same number $D(\bold s,\bold s^\prime)$ of $\bold x\in M_{\bold s}$. Suppose $\mu^\prime=\mu-k$, for some $k \geq 1$.

To select a generic $\bold x^\prime\in M_{\bold s^\prime}$ we (1) choose  a partition $\bold V^\prime$ of the vertex set
\begin{equation}\label{partition}
[n]=\bigcup_{a,b} V^\prime_{a,b}\,\,\bigcup_a V^\prime_{a,\bullet}\,\,\bigcup_b V^\prime_{\bullet,b}\,\,\bigcup V^\prime,
\end{equation}
with $|V^\prime_{a,b}|=v^\prime_{a,b}$, $|V^\prime_{a,\bullet}|=v^\prime_{a,\bullet}$, $|V^\prime_{\bullet, b}|=v^\prime_{\bullet,b}$, $|V^\prime|=v^\prime$; 
(2) select $\mu^\prime$  pairs among $m$ pairs $\{2r-1,2r\}$ and fill them with the vertex pairs $\{i,j\}$ such that the resulting in/out-degree sequence is compatible with the partition $\bold V^\prime$, putting the pairs $\{\star,\star\}$ into the remaining $m-\mu^\prime$ pairs $\{2r-1,2r\}$.

To undo the deletion step, we need to identify a vertex $i\in V^\prime_{0,0}$ and replace some $k$ pairs $\{\star,\star\}$ in $\bold x^\prime$ with
edges $\{u_\ell,v_\ell\}$, $1 \leq \ell \leq k$ chosen such that (1) for each $\ell$, at least one
of the vertices $u_{\ell}$, $v_{\ell}$ is $i$; (2) 
the in/out degrees for the resulting sequence $\bold x$ are compatible with the counts $v_{a,b}$, $v_{a,\bullet}$, $v_{\bullet,b}$, and $v$, and vertex $i$ is light.
 Clearly the number of ways to do this is completely
determined by the partition $\bold V^\prime$, and those counts $v_{a,b}$, $v_{a,\bullet}$, $v_{\bullet,b}$, and $v$. But then this number is a function $D(\bold s,\bold s^\prime)$, i.e.  it
depends only the blocks
cardinalities $v^\prime_{a,b}$, $v^\prime_{a,\bullet}$, $v^\prime_{\bullet, b}$ and $v^\prime$. Indeed a permutation on $[n]$, that transforms one such partition $\bold V^\prime$ of $[n]$ into another given
partition $\bold V^\prime_1$, induces a bijection between the two corresponding sets of the ways to undo the deletion step.\\

{\bf (2)\/} Next, if $\bold x^\prime\in M_{\bold s(t+1)}$, then the inductive assumption and the Markov property
of the process $\{\bold x(t)\}_{t\ge 0}$ implies---via conditioning on $\bold s(t)$---that
\begin{equation}\label{viacond}
\pr(\bold x(t+1)=\bold x^\prime\,|\,\{\bold s(\tau)\}_{\tau\le t})=\frac{1}{|M_{\bold s(t)}|}\sum_{\bold x\in M_{\bold s(t)}}
\pr(\bold x(t+1)=\bold x^\prime\boldsymbol |\,\bold x(t)).
\end{equation}
Now, the number of choices of a transition $Tr$ available for the deletion process applied
to $\bold x\in M_{\bold s(t)}$ is $L(\bold s(t))$, where $L(\bold s(t))$ is the total number of non-isolated light vertices, completely determined by $\bold s(t)$. Hence
\[
\pr(\bold x(t+1)=\bold x^\prime\,|\,\bold x(t))=\frac{1}{L(\bold s(t))}\sum_{Tr} 1_{\{\bold x^\prime\text{ arises
from }\bold x(t)\text{ via }Tr\}}.
\]
Using \eqref{viacond}, we obtain then
\begin{equation}\label{above}
\pr(\bold x(t+1)=\bold x^\prime\,|\,\{\bold s(\tau)\}_{\tau\le t})=\frac{D(\bold s(t),\bold s_{\bold x^\prime})}{|M_{\bold s(t)}|\,L(\bold s(t))}.
\end{equation}
This transition probability depends only on the current tuple $\bold s(t)$ and the next
tuple $\bold s(t+1)=\bold s_{\bold x^\prime}$, rather than on the full value of $\bold x(t+1)=\bold x^\prime$ in the set $M_{\bold s(t+1)}$. Thus
\[
\pr(\bold x(t+1)=\bold x^\prime\,|\,\{\bold s(\tau)\}_{\tau\le t+1})=\frac{1}{|M_{\bold s_{\bold x^\prime}}|},
\]
and the proof is complete.
\end{proof}

\begin{Lemma}\label{v(t)Markov} 
The random sequence of the tuples $\{\bold s(t)\}_{t\ge 0}$,
is a time-\linebreak homogeneous Markov chain with the transition probability 
\[
p(\bold s(t+1)=\bold s^\prime\,|\,\bold s(t)=\bold s):=\frac{D(\bold s,\bold s^\prime)
|M_{\bold s^\prime}|}{|M_{\bold s}|\,L(\bold s)},
\]
if $L(\bold s) \neq 0$. 
\end{Lemma}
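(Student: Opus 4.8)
The plan is to deduce Lemma \ref{v(t)Markov} directly from the computation already carried out inside the proof of Lemma \ref{x(t)uniform}. Two facts from that proof are exactly what I want to reuse: first, part (1) of the proof of Lemma \ref{x(t)uniform} shows that for generic tuples $\bold s,\bold s'$, every $\bold x'\in M_{\bold s'}$ has the same number $D(\bold s,\bold s')$ of admissible edge-deletion preimages in $M_{\bold s}$, a number depending only on $\bold s$ and $\bold s'$; second, equation \eqref{above} shows that whenever $L(\bold s(t))\neq 0$,
\[
\pr\bigl(\bold x(t+1)=\bold x'\,\big|\,\{\bold s(\tau)\}_{\tau\le t}\bigr)=\frac{D(\bold s(t),\bold s_{\bold x'})}{|M_{\bold s(t)}|\,L(\bold s(t))},\qquad \bold x'\in M_{\bold s(t+1)}.
\]
So the first step is simply to fix a target tuple $\bold s'$, sum this identity over the $|M_{\bold s'}|$ sequences $\bold x'\in M_{\bold s'}$, and use that $D(\bold s(t),\bold s_{\bold x'})$ is constant and equal to $D(\bold s(t),\bold s')$ on $M_{\bold s'}$, obtaining
\[
\pr\bigl(\bold s(t+1)=\bold s'\,\big|\,\{\bold s(\tau)\}_{\tau\le t}\bigr)=\frac{D(\bold s(t),\bold s')\,|M_{\bold s'}|}{|M_{\bold s(t)}|\,L(\bold s(t))}.
\]

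The second step is to read off the two conclusions. The right-hand side just displayed is a deterministic function of the pair $(\bold s(t),\bold s')$ alone, with no residual dependence on $\bold s(0),\dots,\bold s(t-1)$; hence conditioning on the entire history is the same as conditioning on $\bold s(t)$, which is the Markov property, and it identifies the one-step transition probability as claimed. Time-homogeneity is then immediate, since the same function $(\bold s,\bold s')\mapsto D(\bold s,\bold s')\,|M_{\bold s'}|\,/\,(|M_{\bold s}|\,L(\bold s))$ carries no explicit $t$-dependence (each of $D(\bold s,\bold s')$, $|M_{\bold s'}|$, $|M_{\bold s}|$, $L(\bold s)$ is a function of the tuples only), with the states $\bold s$ satisfying $L(\bold s)=0$ acting as absorbing. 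As a consistency check I would also verify $\sum_{\bold s'}D(\bold s,\bold s')\,|M_{\bold s'}|=|M_{\bold s}|\,L(\bold s)$ by double counting pairs consisting of a preimage sequence in $M_{\bold s}$ and a choice of the deleted vertex, so that the stated kernel is genuinely a probability distribution.

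I do not anticipate a real obstacle: the one substantive ingredient — constancy of $D(\bold s,\bold s')$ over $M_{\bold s'}$, established via the relabelling bijection in part (1) of the proof of Lemma \ref{x(t)uniform} — is already in hand, and the passage from $\pr(\,\cdot\mid\bold x(t))$ to $\pr(\,\cdot\mid\{\bold s(\tau)\}_{\tau\le t})$ is precisely the content of Lemma \ref{x(t)uniform}. The only point requiring care is pure bookkeeping: $D(\bold s,\bold s')$ must be read consistently as a count of (preimage sequence, choice of deleted vertex) pairs rather than of bare preimage sequences, since a single $\bold x$ can in principle produce the same $\bold x'$ under two different vertex choices; this is the interpretation under which the sum over transitions $Tr$ in \eqref{above} equals $D(\bold s(t),\bold s_{\bold x'})$, and it is harmless throughout.
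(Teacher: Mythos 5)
Your proof is correct and takes essentially the same route as the paper, which simply says the lemma "follows immediately from \eqref{above}"; you spell out that step (sum \eqref{above} over the $|M_{\bold s'}|$ sequences $\bold x'\in M_{\bold s'}$, observe the result depends only on $(\bold s(t),\bold s')$), and your bookkeeping remark about reading $D(\bold s,\bold s')$ as a count of (preimage, deleted-vertex) pairs is an accurate reading of how $D$ actually arises in the double sum $\sum_{\bold x}\sum_{Tr}$ leading to \eqref{above}.
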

\begin{proof} Follows immediately from \eqref{above}.
\end{proof}

\subsection{Expected one-step transitions} Let $\bold s$ be given. Suppose $\bold x$ is chosen uar from $M_{\bold s}$ and one step of the deletion algorithm is carried out, yielding $\bold x^\prime$. Let $\bold s^\prime=\bold s_{\bold x^\prime}$.  Our task is to estimate sharply $\ex[\bold s^\prime-\bold s\,|\,\bold s]$.\\

{\bf Step 1.\/}  First we need to determine the vertex degree distribution of the random
$\bold x\in M_{\bold s}$. To make formulas easier on the eye, we will use $\bold X=\{ X_j; j\in [n]\}$,
$\bold Y=\{Y_j; j\in [n]\}$ to denote the in/out degree sequences of the uniformly random $\bold x\in M_{\bold s}$, and continue to use $\delta$'s, $\Delta$'s  for generic values of individual
vertex in/out degrees $X_i$, $Y_i$. 
Since $\bold s$ contains
full information on counts of vertices with either light in-degree, or light out-degree, our focus
will be on vertices with either in-degree $\ge k_1$ or, not exclusively,  out-degree $\ge k_2$.
Recall that
\begin{align*}
&\bold s=\bigl(\{v_{a,b},v_{a,\bullet},v_{\bullet, b},v\}_{\{a<k_1,\,b <k_2\}},\mu\bigr),\\
&\sum_{a,b}v_{a,b} +\sum_a v_{a,\bullet} + \sum_ b v_{\bullet,b}+v=n.
\end{align*}
To generate the elements $\bold x$ of $M_{\bold s}$ we
\begin{align}
&\bullet\text{select }\bold V,\text{ a partition of }[n]\text{ into blocks }V_{a,b}, V_{a,\bullet}, V_{\bullet,b},V
\text{ of sizes }\notag\\
&\quad v_{a,b}, v_{a,\bullet} , v_{\bullet,b}, v, \text{ for all } a<k_1, \,b<k_2,\,\text{ in }\notag\\
&\qquad\qquad\qquad \binom{n}{\bold v}:=\frac{n!}{\prod_{a,b}v_{a,b}!\prod_a v_{a,\bullet}!\prod_b v_{\bullet,b}!\, v!}\,\,\text{ ways},\label{binv}\\
&\,\,\,\text{ and set }J=J(\bold V)=V\cup\,(\cup_b V_{\bullet,b}),\text{ and }I=I(\bold V)=V\cup\,(\cup_aV_{a,\bullet});\notag\\
&\bullet \text{select }\mu\text{ non-star pairs out of total }m\text{ pairs in }\binom{m}{\mu}\text{ ways},\notag\\
&\bullet\text{choose a matrix }\mathcal M=\{\mu_{i,j}\}_{i,j\in [n]}\text{ such that }\sum_{i,j}\mu_{i,j}=\mu,\label{summuij}\\
&\,\,\,\, \sum_{i\in [n]}\mu_{i,j}=a\,\, (\ge k_1 \text{ resp.}),\,\text{for } j\in (\cup_b V_{a,b})\,\cup V_{a,\bullet}\,\,\,(\text{for } j\in J \text{ resp.}),
\label{indeg}\\
&\,\,\,\, \sum_{j\in [n]}\mu_{i,j}=b\,\, (\ge k_2 \text{ resp.}),\,\text{for } i\in (\cup_a V_{a,b})\cup V_{\bullet,b}\,\,\,(\text{for } i\in I \text{ resp.}),
\label{outdeg}\\
&\bullet\text{for each }\{i,j\} \text{ select and fill }\mu_{i,j} \text{ non-star pairs with labels }i,j. \notag
\end{align}
For each realization of this $4$-step selection we obtain a distinct $\bold x\in M_{\bold s}$. Introduce 
\[
\delta_j=\sum_{i\in[n]}\mu_{i,j},\quad \Delta_i=\sum_{j\in [n]}\mu_{i,j},
\]
the in/out-degrees of the  resulting $\bold x$. According to \eqref{indeg}-\eqref{outdeg},
\begin{equation}\label{fullinout}
\begin{aligned}
&\delta_j=a\,\, (\ge k_1 \text{ resp.}),\,\text{ for } j\in (\cup_b V_{a,b})\,\cup V_{a,\bullet}\,\,\,(\text{for } j\in J \text{ resp.}),\\
&\Delta_i=b\,\, (\ge k_2 \text{ resp.}),\,\text{ for }i\in (\cup_a V_{a,b})
\cup V_{\bullet,b}\,\,\,(\text{for } i\in I \text{ resp.}).
\end{aligned}
\end{equation}
Thus $J$ ($I$ resp.) is the set of vertices of in-degree $\ge k_1$ (out-degree $\ge k_2$ resp.)
The number of ways to choose a matrix $\mathcal M$ with the in-degrees $\boldsymbol\delta$ and the out-degrees $\boldsymbol\Delta$ {\it and\/} to fill the $\mu$ vacant locations $\{\cdot,\cdot\}$ with $\mu_{i,j}$ pairs  $\{i,j\}$, ($i,j\in [n]$),
is
\begin{equation}\label{binprod} 
\begin{aligned}
\binom{\mu}{\boldsymbol\delta}\cdot\binom{\mu}{\boldsymbol\Delta}&=\frac{(\mu!)^2}{\prod_{a<k_1}(a!)^{v^a}
\prod_{b<k_2}(b!)^{v^b}}\\
&\,\,\,\,\,\,\times\prod\limits_{j\in J}\!\,1/\delta_j! \,\,\,\,\,\,\cdot\prod\limits_{i\in I}
\!\,1/\Delta_i!,
\end{aligned}
\end{equation}
where
\begin{equation}\label{va,vb}
\begin{aligned}
v^a&:=|\cup_ b V_{a,b}|+|V_{a,\bullet}|=\sum_{b<k_2}v_{a,b}+v_{a,\bullet},\\
 v^b&:=|\cup_a V_{a,b}|+|V_{\bullet,b}|=\sum_{a<k_1} v_{a,b}+v_{\bullet,b}
\end{aligned}
\end{equation}
are the total number of vertices with in-degree $a<k_1$ (with out-degree $b<k_2$ resp.). Notice that it is the second line expression in \eqref{binprod} that is not determined  by $\bold s$ alone. We know that  $\delta_j\ge k_1$ for $j\in J$, $\Delta_i\ge k_2$ for 
$i\in I$.  Also
\begin{equation}\label{sumsdelDel}
\begin{aligned}
\sum_{j\in J} \delta_j &=\mu - \mu_{i},\quad \mu_{i}:=\sum_a a v^a,\\
\sum_{i\in I}\Delta_i&=\mu-\mu_{o},\quad \mu_{o}:=\sum_b b v^b,
\end{aligned}
\end{equation}
and  
\begin{equation}\label{vi,vo}
|J|=v^{i}:=v+\sum_b v_{\bullet,b};\quad |I|=v^{o}:=v+\sum_a v_{a,\bullet}.
\end{equation}
Here $\mu_{i}$ ($\mu_{o}$ resp.) is the total in-degree (out-degree resp.) of vertices with maximum in-degree (out-degree resp.) below $k_1$ (below $k_2$ resp.); further  $v^{i}$ ($v^{o}$ resp.) is the total number of vertices with in-degree (out-degree resp.) at least $k_1$ (at least $k_2$ resp.). 

So, denoting $\hat{\boldsymbol\delta}=\{\delta_j: j\in J\}$, $\hat{\boldsymbol\Delta}=
\{\Delta_i: i\in I\}$,
\begin{equation}\label{g(s)prelim}
\begin{aligned}
g(\bold s)&=\binom{n}{\bold v}\binom{m}{\mu}\cdot\frac{(\mu!)^2}{\prod_{a<k_1}(a!)^{v^a}
\prod_{b<k_2}(b!)^{v^b}}\\
&\,\,\,\,\,\,\times\sum_{(\hat{\boldsymbol\delta},\hat{\boldsymbol\Delta})\text{ meet }\eqref{sumsdelDel},\atop \delta_j\ge k_1,\,\Delta_i\ge k_2}\,\,
\prod\limits_{j\in J}1/\delta_j!\,\,\prod\limits_{i\in I}1/\Delta_i! .
\end{aligned}
\end{equation}
For the sum to be non-zero, we need to have
\begin{equation}\label{edgedensity>}
\mu-\mu_{i}\ge k_1 |J|=k_1v^{i},\quad \mu-\mu_{o}\ge k_2 |I|=k_2v^{o}.
\end{equation}
Enter the generating functions! Introducing indeterminates $z_{i}$,$z_{o}$, the second line sum in \eqref{g(s)prelim} equals
\begin{equation}\label{[coeff]}
\begin{aligned}
&[z_{i}^{\mu-\mu_{i}}z_{o}^{\mu-\mu_{o}}]\sum_{\delta_j\ge k_1,\,j\in J
\atop \Delta_i\ge k_2,\,i\in I}\,\prod\limits_{j\in J}\,\,z_{i}^{\delta_j}/\delta_j!\,\,\prod\limits_{i\in I}\,\,z_{o}^{\Delta_i}/\Delta_i! \\
=&[z_{i}^{\mu-\mu_{i}}z_{o}^{\mu-\mu_{o}}]\, f_{k_1}(z_{i})^{v^{i}}
f_{k_2}(z_{o})^{v^{o}};\qquad (f_k(z):=\sum_{j\ge k}z^j/j!).
\end{aligned}
\end{equation}
Thus, combining \eqref{binv}, \eqref{binprod}, \eqref{g(s)prelim} and \eqref{[coeff]}, we  have proved
\begin{Lemma}\label{g(s)better}
\begin{align*}
g(\bold s)=&\binom{n}{\bold v}\binom{m}{\mu}\cdot\frac{(\mu!)^2}{\prod_{a<k_1}(a!)^{v^a}
\prod_{b<k_2}(b!)^{v^b}}\\
&\times [z_{i}^{\mu-\mu_{i}}z_{o}^{\mu-\mu_{o}}]\, f_{k_1}(z_{i})^{v^{i}}
f_{k_2}(z_{o})^{v^{o}},
\end{align*}
with ($v^a$, $v^b$), ($\mu_{i}$, $\mu_{o}$),  and ($v^{i}$, $v^{o}$) defined in \eqref{va,vb}, \eqref{sumsdelDel} and \eqref{vi,vo} respectively.
\end{Lemma}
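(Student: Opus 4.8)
\emph{Proof strategy.} Since $g(\bold s)=|M_{\bold s}|$, the plan is to count the sequences $\bold x\in M_{\bold s}$ by arguing that the four-step selection procedure preceding the lemma is a bijection onto $M_{\bold s}$, and then to evaluate the resulting sum with generating functions. First I would verify the bijection: given any $\bold x\in M_{\bold s}$, the partition $\bold V$ is forced (each block is exactly the set of vertices of $\bold x$ of the prescribed in/out-degree type, and $|V_{a,b}|=v_{a,b}$, etc., because $\bold s_{\bold x}=\bold s$), the set of $\mu$ non-star index pairs is forced, the matrix $\mathcal M=\{\mu_{i,j}\}$ is forced, and the assignment of labeled non-star pairs to the cells of $\mathcal M$ is forced; conversely each admissible choice in the procedure yields a distinct $\bold x\in M_{\bold s}$ (the light-vertex counts match by the choice of $\bold V$, the heavy in/out-degrees satisfy $\ge k_1,\,\ge k_2$ by the matrix constraints \eqref{indeg}--\eqref{outdeg}, and $\mu(\bold x)=\mu$). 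Hence $g(\bold s)$ is the product of the counts at each step, which is \eqref{g(s)prelim}: $\binom{n}{\bold v}$ for $\bold V$ by \eqref{binv}, $\binom{m}{\mu}$ for the non-star pairs, and the inner sum for the rest.

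The one step needing care is counting, with $\bold V$ and the $\mu$ non-star pairs fixed, the number of admissible label assignments. The crucial point — built into the alternating out-ball/in-ball description of $\bold x$ — is that the out-label sequence $(x_{2r-1})$ and the in-label sequence $(x_{2r})$ over the $\mu$ non-star pairs are chosen independently of one another, because the surrogate multidigraph $D_{\bold x}$ imposes no constraint linking the tail of an edge to its head. Thus, conditioning on the in-degrees $\hat{\boldsymbol\delta}=\{\delta_j:j\in J\}$ and out-degrees $\hat{\boldsymbol\Delta}=\{\Delta_i:i\in I\}$ (the light-vertex degrees being already pinned down by $\bold V$), the count factors as $\binom{\mu}{\boldsymbol\delta}\binom{\mu}{\boldsymbol\Delta}$, which is \eqref{binprod}; pulling the light contributions $\prod_{a<k_1}(a!)^{v^a}$, $\prod_{b<k_2}(b!)^{v^b}$ out of the factorials and summing over $(\hat{\boldsymbol\delta},\hat{\boldsymbol\Delta})$ subject only to $\delta_j\ge k_1$, $\Delta_i\ge k_2$ and the totals \eqref{sumsdelDel} yields \eqref{g(s)prelim}, with the remaining sum factoring into an in-part and an out-part precisely because there is no joint constraint.

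Finally, each factor sum is a coefficient extraction: $\sum_{\hat{\boldsymbol\delta}}\prod_{j\in J}1/\delta_j!=[z_i^{\mu-\mu_i}]\prod_{j\in J}\bigl(\sum_{\delta\ge k_1}z_i^{\delta}/\delta!\bigr)=[z_i^{\mu-\mu_i}]f_{k_1}(z_i)^{v^i}$, using $|J|=v^i$ and $\mu-\mu_i=\sum_{j\in J}\delta_j$, and symmetrically $[z_o^{\mu-\mu_o}]f_{k_2}(z_o)^{v^o}$ for the out-part with $|I|=v^o$; this is \eqref{[coeff]}. Collecting the three factors $\binom{n}{\bold v}$, $\binom{m}{\mu}$, and the one coming from the label count gives the stated formula, and \eqref{edgedensity>} is exactly the condition for these coefficients to be nonzero, so the formula automatically returns $g(\bold s)=0$ when no admissible $\bold x$ exists.

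I expect the substantive point to be the decoupling in the second paragraph — that once $\bold V$ and the non-star positions are fixed, the joint label count separates into $\binom{\mu}{\boldsymbol\delta}\binom{\mu}{\boldsymbol\Delta}$ summed over degree sequences with no cross-constraint — since this is exactly what lets the two generating functions $f_{k_1}$ and $f_{k_2}$ separate. This is where working with the surrogate model (a multidigraph with no simplicity requirement) earns its keep; everything else is bookkeeping already assembled in \eqref{binv}--\eqref{[coeff]}.
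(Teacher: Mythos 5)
Your proof is correct and takes essentially the same approach as the paper: you re-derive the four-step selection, identify the same in/out decoupling that yields the product of multinomials $\binom{\mu}{\boldsymbol\delta}\binom{\mu}{\boldsymbol\Delta}$, and perform the same coefficient extraction to arrive at \eqref{g(s)prelim}--\eqref{[coeff]}. The only difference is that you spell out the bijection between admissible choices and elements of $M_{\bold s}$, which the paper leaves implicit.
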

To proceed, introduce $g(\bold V,\mu)$,  the total number of $\bold x\in M_{\bold s}$ with a fixed
partition $\bold V$ of $[n]$  into blocks $V_{a,b}, V_{a,\bullet}, V_{\bullet,b},V$
of sizes $v_{a,b}, v_{a,\bullet} , v_{\bullet,b}, v$, for all $a<k_1$, $b<k_2$. By symmetry, it follows from Lemma
\eqref{g(s)better} that 
\begin{equation}\label{g(V,mu)=}
\begin{aligned}
g(\bold V,\mu)=&\binom{m}{\mu}\cdot\frac{(\mu!)^2}{\prod_{a<k_1}(a!)^{v^a}
\prod_{b<k_2}(b!)^{v^b}}\\
&\times [z_{i}^{\mu-\mu_{i}}z_{o}^{\mu-\mu_{o}}]\, f_{k_1}(z_{i})^{v^{i}}
f_{k_2}(z_{o})^{v^{o}}.
\end{aligned}
\end{equation}
In other words, $(\bold V,\mu)$ assumes each of its values with the same probability $\binom{n}
{\bold v}^{-1}$. 
Introduce $g(\bold V,\mu;\hat{\boldsymbol\delta}, \hat{\boldsymbol\Delta})$, the total
number of sequences $\bold x\in M_{\bold s}$ with a fixed partition $\bold V$ and the in-degree (out-degree resp.) sequence for vertices in $J=J(\bold V)$ ($I=I(\bold V)$) equal to
$\hat{\boldsymbol\delta}$ ($\hat{\boldsymbol\Delta}$ resp.). The admissible
$\hat{\boldsymbol\delta}$, $\hat{\boldsymbol\Delta}$ must meet the conditions $\delta_j\ge k_1$ , $j\in J=J(\bold V)$, $\Delta_i\ge k_2$, $i\in I=I(\bold V)$ and \eqref{sumsdelDel}.
Arguing as in derivation of \eqref{g(s)prelim}, we have 
\begin{equation}\label{g(V,mu,hats)=}
\begin{aligned}
g(\bold V,\mu;\hat{\boldsymbol\delta}, \hat{\boldsymbol\Delta})&=\binom{m}{\mu}\cdot\frac{(\mu!)^2}{\prod_{a<k_1}(a!)^{v^a}
\prod_{b<k_2}(b!)^{v^b}}\\
&\quad\times\prod\limits_{j\in J}1/\delta_j!\,\,\prod\limits_{i\in I}1/\Delta_i! .
\end{aligned}
\end{equation}
Set 
$\hat{\bold X}=\{X_j; j\in J\}$, $\hat{\bold Y}=\{Y_i;i\in I\}$. Of course, the remaining components of $\bold X$ and of $\bold Y$ are uniquely determined by $\bold V$. From \eqref{g(V,mu)=}--\eqref{g(V,mu,hats)=} it follows that
\begin{equation}\label{dist(X,Y)}
\pr(\hat{\bold X}=\hat{\boldsymbol\delta},\,\hat{\bold Y}=\hat{\boldsymbol\Delta}\,|\,\bold V,\mu)=
\frac{\prod_j 1/\delta_j!}{[z_{i}^{\mu-\mu_{i}}] f_{k_1}(z_{i})^{v^{i}}}\cdot
\frac{\prod_i1/\Delta_i!}{
[z_{o}^{\mu-\mu_{o}}]
f_{k_2}(z_{o})^{v^{o}}},
\end{equation}
Thus, conditioned on $\bold V$, $\mu$, the vectors $\hat{\bold X}$ and $\hat{\bold Y}$ are {\it mutually\/} independent, each of the components of $\hat{\bold X}$ ($\hat{\bold Y}$ resp.)
having a common distribution, that of $X$ and of $Y$ respectively: for $\delta\ge k_1$, $\Delta\ge k_2$,
\begin{equation}\label{X,Y}
\begin{aligned}
\pr(X=\delta):=\!\pr(X=\delta\boldsymbol |\bold V,\mu)&=\frac{1}{\delta!}\,\frac{\bigl[\eta^{\mu-\mu_i-\delta}\bigr]
f_{k_1}(\eta)^{v^i-1}}{\bigl[\xi^{\mu-\mu_i}\bigr]f_{k_1}(\xi)^{v^i}},\\
\pr(Y=\Delta):=\!\pr(Y=\Delta\boldsymbol |\bold V,\mu)&=\frac{1}{\Delta!}\,\frac{\bigl[\eta^{\mu-\mu_o-\Delta}\bigr]
f_{k_2}(\eta)^{v^o-1}}{\bigl[\xi^{\mu-\mu_o}\bigr]f_{k_2}(\xi)^{v^o}}.
\end{aligned}
\end{equation}
There is a more tractable approximation for the distributions of $X$ and $Y$, applicable 
for a sufficiently large range of $\bold s$.\\

Fix $z_{i}>0$, $z_{o}>0$ and define two
{\it truncated\/} Poissons $Z_{i}$ and $Z_{o}$,
\begin{equation*}
\pr(Z_{i}=\delta)=\frac{z_{i}^{\delta}}{\delta! f_{k_1}(z_{i})},\,\,\delta\ge k_1,\quad
\pr(Z_{o}=\Delta)=\frac{z_{o}^{\Delta}}{\Delta! f_{k_2}(z_{o})},\,\,\Delta\ge k_2;
\end{equation*}
so $Z_{i,o}$ is $\text{Poi}(z_{i,o})$ conditioned on $\text{Poi}(z_{i,o})\ge k_{1,2}$. Introduce $\hat{\bold Z}_{i}=\{Z_{i}^{(j)},\,j\in J\}$, the $|J|$-long sequence of independent copies of $Z_{i}$, and $\hat{\bold Z}_{o}=\{Z_{o}^{(i)},\,i\in I\}$, the $|I|$-long sequence of independent copies of $Z_{o}$. Using $\hat{\bold Z}_{i}$ and $\hat{\bold Z}_{o}$,
we rewrite the equations \eqref{X,Y} as follows: for $\delta\ge k_1$, $\Delta\ge k_2$,
\begin{equation}\label{X,Y:Zi,Zo}
\begin{aligned}
\pr(X=\delta)&=\pr(Z_i=\delta)\,\frac{\pr\!\left(\sum_{j=1}^{v^i-1}Z_i^{(j)}=\mu-\mu_i-\delta
\right)}{\pr\!\left(\sum_{j=1}^{v^i}Z_i^{(j)}=\mu-\mu_i\right)},\\
\pr(Y=\Delta)&=\pr(Z_o=\Delta)\,\frac{\pr\!\left(\sum_{j=1}^{v^o-1}Z_o^{(j)}=\mu-\mu_o-\Delta\right)}{\pr\!\left(\sum_{j=1}^{v^o}Z_o^{(j)}=\mu-\mu_o\right)}.
\end{aligned}
\end{equation}
Not too surprisingly, we choose $z_i$ and $z_o$ such that
\[
\ex[Z_i]=\frac{\mu-\mu_i}{v^i},\quad \ex[Z_o]=\frac{\mu-\mu_o}{v^o},
\]
or, denoting $\psi_k(z)=zf_{k-1}(z)/f_k(z)$,
\begin{equation}\label{zi,zo,roots}
\ex[Z_i]=\psi_{k_1}(z)=\frac{\mu-\mu_{i}}{v^{i}}\,\,\text{ and }\,\, 
\ex[Z_o]=\psi_{k_2}(z)=\frac{\mu-\mu_{o}}{v^{o}}.
\end{equation}
Intuitively, $\psi_k(z)$ is strictly increasing with $z$. Indeed, for the Poisson $Z=Z(z)$, truncated at $k$, 
\begin{equation}\label{E[Z]grows}
\frac{d \psi_k(z)}{dz}=\frac{d E[Z]}{dz}=z^{-1}\text{Var}(Z)>0.
\end{equation}
(The interested reader may wish to prove this surprisingly simple, yet very useful, identity;  cf.  
 \cite{PitSpeWor}, \cite{AroFriPit}.) Therefore, as $\psi_k(0+)=k$ and $\psi_k(\infty)=\infty$, the equations \eqref{zi,zo,roots} have unique positive roots iff $\mu-\mu_{i}>k_1 v^{i}$ and $\mu-\mu_o>k_2 v^o$, cf. \eqref{edgedensity>}. We will assume that $\bold s$ is such
that, a bit stronger,
\begin{equation}\label{diff>}
\mu-\mu_i-k_1v^i\ge \omega,\quad \mu-\mu_o-k_2v^o\ge \omega,
\end{equation}
where $\omega=\omega(n)\to\infty$ however slowly. From \eqref{zi,zo,roots} it follows
that  $z_i\ge \Theta(\omega/v^i)$, $z_o\ge\Theta(\omega/v^o)$. We also assume that  
\begin{equation}\label{vi,o>,mu<}
v\ge \frac{n}{\log n}.
\end{equation}
Since $\mu\le m=[c_nn]$, we see then that  the parameters $z_{i}$, $z_{o}$ are of 
order $O(\log n)$.
Under the conditions \eqref{diff>}, \eqref{vi,o>,mu<},  the denominators in \eqref{X,Y:Zi,Zo} are given
by a local limit theorem (LLT)
\[
\pr\left(\sum_{j=1}^{v^{i,o}}Z_{i,o}^{(j)}=\mu-\mu_{i,o}\right)
=\frac{1+O((v^{\text{i,o}}z_{\text{i,o}})^{-1})}{\sqrt{2\pi v^{\text{i,o}}\text{Var}(Z_{\text{i,o}})}}
=\theta\bigl((v^{i,o}z_{i,o})^{-1/2}\bigr).
\]
We omit the proof since it is a direct extension of the LLT for the Poissons truncated at 
$k=2$ established in  \cite{AroFriPit}. It follows then that, uniformly over $\delta$,
$\Delta$,
\begin{equation}\label{PX,PY<}
\begin{aligned}
\pr(X=\delta)&=O\bigl(\!\pr(Z_i=\delta)(v^{i}z_{i})^{1/2}\bigr),\\
\pr(Y=\Delta)&=O\bigl(\!\pr(Z_o=\Delta)(v^{o}z_{o})^{1/2}\bigr).
\end{aligned}
\end{equation}
We will also need a sharp asymptotic formula for the ratios of the local probabilities in 
\eqref{X,Y:Zi,Zo}, considerably stronger than a formula obtained by using the LLT 
for the numerators, and separately for the denominators. A very similar case of the Poissons truncated at $k=2$ was analyzed in \cite{PitWor}, Lemma 7. 
\begin{Lemma}\label{ratioappr} Suppose that $\delta=o(v^i)$ and $\Delta=o(v^o)$. Under the
conditions \eqref{diff>}, \eqref{vi,o>,mu<}, the ratios in \eqref{X,Y:Zi,Zo} are $1+O(\delta/v^i)$ and $1+O(\Delta/v^o)$,
so that
\begin{equation}\label{PX,PYclose}
\begin{aligned}
\pr(X=\delta)&=\!\pr(Z_i=\delta)\bigl[1+O(\delta/v^i)\bigr],\\
\pr(Y=\Delta)&=\!\pr(Z_o=\Delta)\bigl[1+O(\Delta/v^o)\bigr].
\end{aligned}
\end{equation}
\end{Lemma}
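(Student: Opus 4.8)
The plan is to estimate the ratio of local probabilities
$\pr\!\left(\sum_{j=1}^{v^i-1}Z_i^{(j)}=\mu-\mu_i-\delta\right)\big/\pr\!\left(\sum_{j=1}^{v^i}Z_i^{(j)}=\mu-\mu_i\right)$
(and its out-degree analogue) directly, rather than applying the LLT to numerator and denominator separately, since the latter would only give $1+O((v^iz_i)^{-1/2})$, far too weak. First I would write $S_\ell=\sum_{j=1}^{\ell}Z_i^{(j)}$, so the ratio is $\pr(S_{v^i-1}=N-\delta)/\pr(S_{v^i}=N)$ with $N=\mu-\mu_i$. The natural route is a "tilting'' argument: recall that $z_i$ was chosen in \eqref{zi,zo,roots} precisely so that $\ex[Z_i]=N/v^i$, i.e. $z_i$ is the correct exponential-tilt parameter making $N$ the mean of $S_{v^i}$ and (approximately) $N-\delta$ close to the mean of $S_{v^i-1}$ when $\delta$ is small. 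Conditioning on the value of $Z_i^{(v^i)}$, one gets the exact identity
$\pr(S_{v^i}=N)=\sum_{r\ge k_1}\pr(Z_i=r)\,\pr(S_{v^i-1}=N-r)$,
so the denominator is an average of local probabilities of $S_{v^i-1}$ over a window of values centred near $N-\ex[Z_i]$, weighted by the law of $Z_i$ (which has exponential tails and is concentrated on an $O(\sqrt{z_i})=O(\sqrt{\log n})$ scale). The numerator is the single term $\pr(S_{v^i-1}=N-\delta)$.

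The key step is then a smoothness/regularity estimate for the local probability function $\ell\mapsto\pr(S_{v^i-1}=\ell)$ across a window of width $O(\delta+\sqrt{z_i})$ around $N$. Here I would invoke a \emph{local} version of the CLT with control on first differences: writing $h(\ell)=\pr(S_{v^i-1}=\ell)$, one shows $h(\ell+1)/h(\ell)=1+O\big((\ell-\ex S_{v^i-1})/(v^i\mathrm{Var}(Z_i))\big)+O((v^iz_i)^{-1})$ uniformly for $|\ell-\ex S_{v^i-1}|=O(\sqrt{v^iz_i}\,)$, which follows from the Gaussian approximation $h(\ell)\approx (2\pi v^i\sigma^2)^{-1/2}\exp\{-(\ell-\ex S_{v^i-1})^2/(2v^i\sigma^2)\}$ (with $\sigma^2=\mathrm{Var}(Z_i)\asymp z_i$, by \eqref{E[Z]grows} and since $z_i=O(\log n)$) together with the Edgeworth-type error bounds from the same LLT machinery used in \cite{AroFriPit}. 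Summing the conditioning identity against these ratios: $\pr(S_{v^i}=N)=h(N-\delta)\cdot\big(\ex[\,\mathrm{ratio\ factors}\,]\big)$, and the ratio factors telescope to $1+O((r-\delta)/v^i)+O((v^iz_i)^{-1})$; averaging over $r\sim Z_i$ (mean $\ex[Z_i]=N/v^i=O(\log n)$, hence $\ex|r-\delta|=O(\delta+z_i)$) gives denominator $=h(N-\delta)\,(1+O(\delta/v^i)+O(z_i/v^i)+O((v^iz_i)^{-1}))$. Since $z_i=O(\log n)$, $v^i\ge v\ge n/\log n$ by \eqref{vi,o>,mu<}, and $(v^iz_i)^{-1}$ is far smaller than $\delta/v^i$ whenever $\delta\ge 1$ (and the statement is about $\delta=o(v^i)$, so $\delta\ge 1$), all error terms are absorbed into $O(\delta/v^i)$, yielding the claimed ratio $1+O(\delta/v^i)$. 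The out-degree statement is identical with $(k_1,z_i,v^i,\mu_i)$ replaced by $(k_2,z_o,v^o,\mu_o)$, and \eqref{PX,PYclose} is then immediate from \eqref{X,Y:Zi,Zo}.

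The main obstacle I anticipate is making the first-difference estimate for $h(\ell)$ genuinely uniform over the relevant window while $z_i$ (hence $\sigma^2$) is allowed to grow like $\log n$: one must be careful that the Gaussian width $\sqrt{v^i\sigma^2}$ dominates both $\delta$ and the $Z_i$-fluctuation scale $\sqrt{z_i}$, which it does under \eqref{diff>}–\eqref{vi,o>,mu<} but requires tracking the constants through the truncated-Poisson LLT. A secondary technical point is handling the truncation $Z_i\ge k_1$: the conditioning identity only involves $r\ge k_1$, which is harmless, but the tail bound $\ex|Z_i-\delta|=O(\delta+z_i)$ needs the exponential decay of $\pr(Z_i=r)$ for large $r$, which follows from $z_i=O(\log n)$ and Poisson tails. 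I would lift the detailed LLT-with-differences from \cite{AroFriPit} (their $k=2$ case) and \cite{PitWor} Lemma 7 essentially verbatim, noting only that nothing in those arguments used $k=2$ beyond the generic properties $\psi_k'(z)=\mathrm{Var}(Z)/z>0$ and the exponential tails, both of which hold for all $k$.
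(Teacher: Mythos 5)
The paper does not give its own proof of this lemma; it simply says the argument ``runs very close'' to \cite{PitWor}, Lemma~7 (a truncated-at-$2$ Poisson case), so there is no paper-side argument to match your sketch against line by line. Your route---the decomposition $\pr(S_{v^i}=N)=\sum_{r\ge k_1}\pr(Z_i=r)\pr(S_{v^i-1}=N-r)$ with $N=\mu-\mu_i$, followed by first-difference control of $\ell\mapsto\pr(S_{v^i-1}=\ell)$ via the local CLT---is the natural one and is consistent in spirit with that reference.

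The gap is in the final absorption step. After averaging over $r\sim Z_i$ you collect an error of the shape $O(\delta/v^i)+O(z_i/v^i)+O((v^iz_i)^{-1})$ and then claim that all three are $O(\delta/v^i)$ ``whenever $\delta\ge 1$''. That disposes of the third term only. The second one, $O(z_i/v^i)$, is \emph{not} $O(\delta/v^i)$ when $\delta=O(1)$ and $z_i\asymp\log n$, which is squarely inside the regime the lemma is invoked in (e.g.\ $\delta=k_1$ for $\pr(X=k_1)$). It is a genuine term, not an artifact of loose bounding: a direct Gaussian/saddle-point evaluation of the ratio in \eqref{X,Y:Zi,Zo} gives $1+\frac{\text{Var}(Z_i)-(\ex[Z_i]-\delta)^2}{2v^i\,\text{Var}(Z_i)}+\cdots$, which for $\delta\ll z_i$ is $1-\Theta(z_i/v^i)$. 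In \cite{PitWor} the tilting parameter is $O(1)$, so there $z/v=O(1/v)=O(\delta/v)$ and the absorption is harmless; here $z_i$ is allowed to grow. Relatedly, ``$\delta=o(v^i)$'' is too generous a hypothesis: for $\delta\gg\sqrt{v^iz_i}$ the factor $\exp\{-(\ex[Z_i]-\delta)^2/(2v^i\text{Var}(Z_i))\}$ is no longer $1+o(1)$. What your argument actually delivers cleanly is the ratio $=1+O\bigl((\delta+z_i)/v^i\bigr)$ for $\delta=o(\sqrt{v^iz_i})$; you should state and prove that version, and then verify it still yields Corollary~\ref{E[X],E[Y]=appr} unchanged, which it does because $\ex\bigl[Z_i(Z_i+z_i)\bigr]/v^i=O(z_i^2/v^i)=O(n^{-1}\log^3 n)$.
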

\noindent
The proof is omitted, as it runs very close to that in \cite{PitWor}, (pp. $154$-$156$). Since
$z_{i,o}=O(\log n)$ and $v_{i,o}\ge n/\log n$, we obtain

\begin{Corollary}\label{E[X],E[Y]=appr} Under the condition \eqref{diff>},
\[
E[X]=\ex[Z_i]+O\bigl(n^{-1}\log^3 n\bigr),\quad E[Y]=\ex[Z_o]+O\bigl(n^{-1}\log^3 n\bigr).
\]
\end{Corollary}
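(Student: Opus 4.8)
The plan is to deduce Corollary~\ref{E[X],E[Y]=appr} directly from Lemma~\ref{ratioappr} by averaging the pointwise estimate \eqref{PX,PYclose} against the truncated Poisson weights, while controlling the contribution of the ``large'' values of $\delta$ and $\Delta$ that fall outside the regime $\delta=o(v^i)$, $\Delta=o(v^o)$ in which Lemma~\ref{ratioappr} applies. Concretely, I would first write
\[
E[X]=\sum_{\delta\ge k_1}\delta\,\pr(X=\delta)
=\sum_{\delta\ge k_1}\delta\,\pr(Z_i=\delta)\bigl[1+O(\delta/v^i)\bigr]
\]
on the range $\delta\le \delta_0$, where $\delta_0$ is a truncation level to be chosen (say $\delta_0=\log^2 n$, which is safely $o(v^i)$ since $v^i\ge v\ge n/\log n$). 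On this range the error term contributes
\[
O\!\left(\frac{1}{v^i}\sum_{\delta\ge k_1}\delta^2\,\pr(Z_i=\delta)\right)
=O\!\left(\frac{\ex[Z_i^2]}{v^i}\right),
\]
and since $z_{i}=O(\log n)$ and $Z_i$ is $\mathrm{Poi}(z_i)$ conditioned to be $\ge k_1$, one has $\ex[Z_i^2]=O(z_i^2)=O(\log^2 n)$; combined with $v^i\ge n/\log n$ this gives an error $O(n^{-1}\log^3 n)$, exactly as claimed. The leading term $\sum_{\delta\le\delta_0}\delta\,\pr(Z_i=\delta)$ equals $\ex[Z_i]$ up to the tail $\sum_{\delta>\delta_0}\delta\,\pr(Z_i=\delta)$, which is super-polynomially small by the standard Poisson tail bound, hence negligible.

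The one genuine loose end is the tail of $X$ itself, i.e.\ the contribution of $\delta>\delta_0$ to $E[X]=\sum_\delta\delta\,\pr(X=\delta)$, where Lemma~\ref{ratioappr} is not available. Here I would invoke the crude bound \eqref{PX,PY<}, $\pr(X=\delta)=O\bigl(\pr(Z_i=\delta)(v^iz_i)^{1/2}\bigr)$, so that
\[
\sum_{\delta>\delta_0}\delta\,\pr(X=\delta)
=O\!\left((v^iz_i)^{1/2}\sum_{\delta>\delta_0}\delta\,\pr(Z_i=\delta)\right).
\]
The extra factor $(v^iz_i)^{1/2}=O(n^{1/2})$ is harmless because $\sum_{\delta>\delta_0}\delta\,\pr(Z_i=\delta)$ is super-polynomially small (being a Poisson tail beyond $\log^2 n$ with parameter $O(\log n)$), so the whole tail contribution is $o(n^{-1})$. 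The estimate for $E[Y]$ is identical, with $Z_o$, $v^o$, $z_o$ in place of $Z_i$, $v^i$, $z_i$, and with $k_2$ in place of $k_1$; no new ideas are needed.

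The main (minor) obstacle is bookkeeping the interplay of the three scales: the truncation level $\delta_0$ must be chosen large enough that the Poisson tails beyond it beat the polynomial factor $n^{1/2}$ coming from \eqref{PX,PY<}, yet small enough to stay in the $o(v^i)$ regime of Lemma~\ref{ratioappr}; any $\delta_0$ with $\log^{1+c}n\ll \delta_0\ll n/\log n$ works, and $\delta_0=\log^2 n$ is a convenient concrete choice. Once that choice is fixed, everything else is the routine second-moment computation $\ex[Z_i^2]=O(\log^2 n)$ together with $v^i\ge n/\log n$, which produces the advertised error term $O(n^{-1}\log^3 n)$. I would present this as a short paragraph rather than a formal displayed argument, since it is a direct corollary of the already-granted Lemma~\ref{ratioappr} and the moment estimates for truncated Poissons with parameter $O(\log n)$.
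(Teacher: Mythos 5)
Your argument is correct and is exactly the implicit computation the paper compresses into ``Since $z_{i,o}=O(\log n)$ and $v_{i,o}\ge n/\log n$, we obtain'': apply Lemma~\ref{ratioappr} termwise, bound the resulting error by $O\bigl(\ex[Z_i^2]/v^i\bigr)=O(n^{-1}\log^3 n)$, and handle the range where $\delta=o(v^i)$ fails via \eqref{PX,PY<} and the super-polynomial Poisson tail beyond $\log^2 n$. The only thing you add is the explicit truncation at $\delta_0=\log^2 n$, which the paper treats as obvious; your choice is the natural one and matches \eqref{P(X>)<,P(Y>)<}.
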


Finally,  since $\ex[Z_{i,o}]=O(\log n)$ as well, we should expect chances of $X$, $Y$ exceeding $\log^2 n$ be very
small. Indeed, applying \eqref{PX,PY<}, for $U=X,Y$,
\begin{equation}\label{P(X>)<,P(Y>)<}
\begin{aligned}
\pr(U\ge& \log^2 n\boldsymbol| \bold V,\mu, u)=O\bigl(e^{-\log^2 n}\bigr)\\&\Longrightarrow
\pr(U\ge \log^2 n\boldsymbol| \bold s)=O\bigl(e^{-\log^2 n}\bigr),
\end{aligned}
\end{equation}
Thus we have a complete description of the distribution of the in/out degree sequence $(\bold X,
\bold Y)$ of $\bold x$, conditioned on $(\bold V,\mu)$. It depends on $\mu$ and $\bold V$,
with the latter entering only through $\bold v$, composed of cardinalities of set-components of $\bold V$, which makes this distribution equal to the distribution of $(\bold X,\bold Y)$ conditioned only on $\bold s$. We had observed already that, conditioned on the in/out degree sequence,
the directed edges are obtained by allocating uniformly at random all $\mu=\sum_i X_i$ in-balls and all $\mu=\sum_iY_i$
out balls among $\mu$ boxes, one in-ball and one out-ball per box. This allows to write a formula for the joint distribution of the numbers of edges between a given vertex $u$ and each  
vertex $j\in [n]$. Here it is. Let $E(u,j)$ and $E(j,u)$ denote the
random number of directed edges from $u$ to $j$,  and from $j$ to $u$; let $\bold E(u)=\{E(u,j),
E(j,u)\}_{j\in [n]}$. Then, for each $\bold e(u):=\{e(u,j),e(j,u)\}_{j\in [n]}$, such that $\sum_j e(u,j)=Y_u$, $\sum_j e(j,u)=X_u$,
we have
\begin{equation}\label{edgedistr}
\begin{aligned}
\pr(\bold E(u)=\bold e(u)| \bold X,\bold Y)&=\frac{\prod_j\binom{X_j}{e(u,j)}\binom{Y_j}{e(j,u)}}
{\sum_{\bold e'}\prod_{w}\binom{X_w}{e'(u,w)}\binom{Y_w}{e'(w,u)}}\\
&=\frac{\prod_{j}\binom{X_j}{e(u,j)}\binom{Y_j}{e(j,u)}}{\binom{\mu}{X_u}\binom{\mu}{Y_u}};
\end{aligned}
\end{equation}
(needless to say, the generic $\bold e'(u)$ in the sum meets the same constraint as $\bold e(u)$).
Consequently
\begin{equation}\label{margE(u,v)}
\begin{aligned}
\pr(E(u,j)=e(u,j)\boldsymbol |\bold X,\bold Y)&=\frac{\binom{X_j}{e(u,j)}\binom{\mu-X_j}{Y_u-e(u,j)}}
{\binom{\mu}{Y_u}},\\
\pr(E(j,u)=e(j,u)\boldsymbol |\bold X,\bold Y)&=\frac{\binom{Y_j}{e(j,u)}\binom{\mu-Y_j}{X_u-e(j,u)}}
{\binom{\mu}{X_u}},
\end{aligned}
\end{equation}
$E(u,j)$, $E(j,u)$ being (conditionally) independent.

{\bf Step 2.\/} We are ready now to evaluate the expected, one-step, change $d\bold s:= \bold s^\prime -\bold s$, conditioned on $\bold s$. As we recall, during a generic step we select a light vertex $u$ uniformly at random and delete all the edges incident to it. Every choice of $u$ and
deletion of the associated edges,  has the same (conditional) probability $1/L(\bold s)$, where $L(\bold s)$, the total number of {\it non-isolated\/} light
vertices at state $\bold s$,  is given by
\[
L(\bold s)=\sum_{(a,b) \neq (0,0)}v_{a,b} +\sum_a v_{a,\bullet}+\sum_b v_{\bullet,b}.
\]
Recall that for $a<k_1$, $b<k_2$, $v_{a,\bullet}$ ($v_{\bullet ,b}$ resp.) is the number of
vertices of in-degree $a$, and out-degree $\ge k_2$ (of out-degree $b$, and in-degree $\ge k_1$
resp.). Clearly it suffices then to evaluate $\ex[d\bold s|\bold s,u]$, i.e.
the expected change conditioned on $\bold s$, and $u$, the chosen light vertex. In each of the
steps that follow, we first derive the expected change of a component of $\bold s'-\bold s$ conditioned on the finer information given by $(\bold X,\bold Y)$ and then average the result
using the distribution of $(\bold X,\bold Y)$, conditional on $\bold s$, obtained in the previous
step.\\

{\bf (1)\/} Let us evaluate $\ex[dv_{a,b}|\bold s,u]$. Clearly, 
\begin{equation*}
\begin{aligned}
v^\prime_{a,b}=&\bigl|\{j:  X_j^\prime=a, Y_j^\prime=b\}\bigr| = \sum_j 1_{\{X^\prime_j=a, Y^\prime_j=b\}}\\
=& v_{a,b} + \sum_j \left( 1_{\{X^\prime_j=a, Y^\prime_j=b\}} -1_{\{X_j=a, Y_j=b\}}\right), 
\end{aligned}
\end{equation*}
so that 
\begin{equation}\label{dvab,uneqv,1}
dv_{a,b}:=v^\prime_{a,b}-v_{a,b}=\sum_j \left( 1_{\{X^\prime_j=a, Y^\prime_j=b\}} -1_{\{X_j=a, Y_j=b\}}\right).
\end{equation}
Here
\begin{equation*}
1_{\{X^\prime_j=a, Y^\prime_j=b\}} -1_{\{X_j=a, Y_j=b\}} = \left\{
\begin{array}{ll}
      1 & j \in V^\prime_{a,b} \text{ and } j \notin V_{a,b} \\
      -1 & j \in V_{a,b} \text{ and } j \notin V^\prime_{a,b}  \\
      0 & \text{ otherwise. }\\
\end{array} 
\right.
\end{equation*}
Therefore 
\begin{equation*}
\begin{aligned}
\ex\bigl[1_{\{X^\prime_j=a, Y^\prime_j=b\}} -1_{\{X_j=a, Y_j=b\}} \boldsymbol | \bold V,\mu,
u\bigr] & = \pr ( j \in V^\prime_{a,b};\, j \notin V_{a,b}\boldsymbol | \bold X,\bold Y,
u ) \\
& - \pr (j \in V_{a,b};\,  j \notin V^\prime_{a,b}\boldsymbol | \bold X,\bold Y,
u)
\end{aligned}
\end{equation*}
Let $j \neq u$.  By \eqref{margE(u,v)},
\begin{equation*}
\begin{aligned}
\pr (j \in V_{a,b};\, j \notin V^\prime_{a,b}  \boldsymbol | & \bold V,\mu,
u)  = 1_{\{j\in V_{a,b}\}}\bigl[1-\pr(j\in V'_{a,b}\boldsymbol |\bold V,\mu,u)\bigr]\\
&=1_{\{j\in V_{a,b}\}}\bigl[1-\pr(E(u,j)=E(j,u)=0\boldsymbol |\bold V,\mu,u)\bigr]\\
&=1_{\{j\in V_{a,b}\}}\left[1-\frac{\binom{\mu-X_u}{b}\binom{\mu-Y_u}{a}}{\binom{\mu}{b}
\binom{\mu}{a}}\right].
\end{aligned}
\end{equation*}
Furthermore
\begin{multline*}
\pr ( j \in V^\prime_{a,b};\,  j \notin V_{a,b}\boldsymbol | \bold V,\mu, u )\\
=\!\!\!\!\! \sum_{k, \ell \geq 0 \atop (k, \ell) \neq (0,0)}\!\!\!\!\!\!\!\pr(X_j = a+k; Y_j = b+ \ell;\, E(j, u)=\ell;\, E(u,j)=k \boldsymbol | \bold V,\mu, u) \\
= \sum_{k, \ell \geq 0 \atop (k, \ell) \neq (0,0)}
 \frac{ {X_u \choose \ell} {\mu-X_u \choose Y_j - \ell}}{{\mu \choose Y_j}} \cdot\frac{ {Y_u \choose k} {\mu-Y_u \choose X_j - k}}{{\mu \choose X_j}} 1_{\{X_j = a+k, Y_j = b+\ell\}}  \\
 = \sum_{k, \ell \geq 0 \atop (k, \ell) \neq (0,0)}
  \frac{ {X_u \choose \ell} {\mu-X_u \choose b}}{{\mu \choose b+\ell}} \cdot\frac{ {Y_u \choose k} {\mu-Y_u \choose a}}{{\mu \choose a+k}} 1_{\{X_j = a+k, Y_j = b+\ell\}}.
 \end{multline*}
So, for $j\neq u$,
\begin{multline}\label{a,b,jnequ}
\ex\bigl[1_{\{X^\prime_j=a, Y^\prime_j=b\}} -1_{\{X_j=a, Y_j=b\}} \boldsymbol | \bold V,\mu,
u\bigr] =-1_{\{j\in V_{a,b}\}}\left[1-\frac{\binom{\mu-X_u}{b}\binom{\mu-Y_u}{a}}{\binom{\mu}{b}
\binom{\mu}{a}}\right]\\
+\!\!\!\sum_{k, \ell \geq 0 \atop (k, \ell) \neq (0,0)}
  \frac{ {X_u \choose \ell} {\mu-X_u \choose b}}{{\mu \choose b+\ell}} \cdot\frac{ {Y_u \choose k} {\mu-Y_u \choose a}}{{\mu \choose a+k}} 1_{\{X_j = a+k, Y_j = b+\ell\}}.
\end{multline}
For $j=u$, we have $1_{\{X^\prime_u=a, Y^\prime_u=b\}} = 1_{\{(a,b) = (0,0)\}}$, since we delete all of $u$'s incident edges.  
Therefore, 
\begin{equation*}
1_{\{X^\prime_u=a, Y^\prime_u=b\}} -1_{\{X_u=a, Y_u=b\}} = 1_{\{(a,b)=(0,0)\}} - 
1_{\{u \in V_{a,b}\}},
\end{equation*}
i.e. {\it conditionally\/} a constant; the last indicator is zero for $(a,b)=(0,0)$, since $u$ is
non-isolated.  So putting together \eqref{dvab,uneqv,1}, \eqref{a,b,jnequ},
the last formula and using notation $V_{\alpha,\beta}:=\{j:\,X_j=\alpha,Y_j=\beta\}$, $v_{\alpha,\beta} := |V_{\alpha, \beta}|$ even for
$\alpha\ge k_1$ or/and $\beta\ge k_2$, we have
\begin{multline}\label{dva,b;X,Y,u}
\ex\bigl[dv_{a,b}\boldsymbol |\bold V,\mu,u\bigr]=1_{\{(a,b)=(0,0)\}} - 1_{\{u \in V_{a,b}\}}\\
-\bigl(v_{a,b}-1_{\{u\in V_{a,b}\}}\bigr)\left[1-\frac{\binom{\mu-X_u}{b}\binom{\mu-Y_u}{a}}{\binom{\mu}{b}\binom{\mu}{a}}\right]\\
+\sum_{k, \ell \geq 0 \atop (k, \ell) \neq (0,0)}
  \frac{ {X_u \choose \ell} {\mu-X_u \choose b}}{{\mu \choose b+\ell}} \cdot\frac{ {Y_u \choose k} {\mu-Y_u \choose a}}{{\mu \choose a+k}} \cdot\bigl(v_{a+k,b+\ell}-1_{\{u\in V_{a+k,b+\ell}\}}\bigr).
 \end{multline}
Now, according to \eqref{P(X>)<,P(Y>)<}, 
\begin{equation}\label{X,Ysmall}
\pr\bigl(\bigl.\max_j(X_j+Y_j)\le \log^2 n\bigr|\,\bold V,\mu\bigr)\ge 1 - e^{-0.5\log^2 n}.
\end{equation}
For the extremely likely values of $(\bold X,\bold Y)$, that we focus on from now, we have
 \[
\frac{bX_u}{\mu}+\frac{aY_u}{\mu}+O\bigl(n^{-2}\log^6n\bigr),
\]
as $\mu$ is $n/\log n$ at least. Further, the dominant contribution to the sum in \eqref{dva,b;X,Y,u} comes from
$(k,\ell)=(1,0)$ and $(k,\ell)=(0,1)$, and the full sum equals
\begin{multline*}
\frac{(b+1)X_u}{\mu}\bigl(v_{a,b+1}-1_{\{u\in V_{a,b+1}\}}\bigr)\\
+\frac{(a+1)Y_u}{\mu}\bigl(v_{a+1,b}-1_{\{u\in V_{a+1,b}\}}\bigr)+O(\eps_n),\quad 
\eps_n:=n^{-1}\log^6 n,
\end{multline*}
as $v_{a+k,b+\ell}\le \mu/(a+b+1)$. Now $\eps_n$ far exceeds the expected contribution of
$(\bold X,\bold Y)$ that do not meet the constraint \eqref{X,Ysmall}. So the equation 
\eqref{dva,b;X,Y,u} becomes
\begin{multline}\label{dva,b;X,Y,u, asym}
\ex\bigl[dv_{a,b}\boldsymbol |\bold V,\mu,u\bigr]=1_{\{(a,b)=(0,0)\}} - 1_{\{u \in V_{a,b}\}}\\
-v_{a,b}\left(\frac{bX_u}{\mu}+\frac{aY_u}{\mu}\right)
+\frac{(b+1)X_u}{\mu}\,v_{a,b+1}
+\frac{(a+1)Y_u}{\mu}\,v_{a+1,b}\\
+O(\eps_n);
\end{multline}
(three terms containing $1_{\{u\in V_{a,b}\}}$, $1_{\{u\in V_{a,b+1}\}}$ and $1_{\{u\in V_{a+1,b}\}}$ added up to $(a+b)/\mu$, absorbed  by $O(\eps_n)$).

Next we use \eqref{dva,b;X,Y,u, asym} to evaluate $\ex\bigl[dv_{a,b}\boldsymbol |\bold V,\mu\bigr]$, recalling that $u$ is chosen uar from  $L:=L(\bold s)$ non-isolated light vertices. To do so,
notice first that
\[
\sum_{j\text{ light}}X_j=\mu_i+\sum_{j\in J}X_j1_{\{Y_j<k_2\}},
\]
where $\mu_i$ is the total in-degree of in-light vertices, determined completely by $\bold s$,
see \eqref{sumsdelDel}. So
\begin{align}
\ex\left[\sum_{j\text{ light}}X_j\biggr|\bold V,\mu\right]&=\mu_i+
\ex\left[\sum_{j\in J}X_j1_{\{Y_j<k_2\}}\biggr|\bold V,\mu\right]\notag\\
&=\mu_i+\ex[X]\big|\{j\in J: Y_j<k_2\}\bigr|\notag\\
&=\mu_i+\ex[X] (v^i-v)=:E_i;\label{=:Ei}
\end{align}
explanation: $J=J(\bold V,\mu)$ is the set of all $v^i$ in-heavy vertices; conditioned on
$(\bold V,\mu)$,  $X$ has the common distribution of the in-degree of each one of those vertices, see \eqref{X,Y}; $v$ is the number of all in/out-heavy vertices. Analogously
\begin{equation}\label{=:Eo}
\ex\left[\sum_{j\text{ light}}Y_j\biggr|\bold V,\mu\right]=\mu_o+\ex[Y](v^o-v)=:E_o.
\end{equation}
Now for  $a\le k_1-2$, $b\le k_2-2$ the parameters $v_{a+1,b}$, $v_{a,b+1}$ are
completely determined by $\bold s$. So, using \eqref{=:Ei} and \eqref{=:Eo}, we have
\begin{multline}\label{dva,b;X,Y, asym}
\ex\bigl[dv_{a,b}\boldsymbol |\bold V,\mu\bigr]=1_{\{(a,b)=(0,0)\}}-\frac{v_{a,b}}{L}\,1_{\{(a,b)\neq
(0,0)\}}\\
+\frac{E_o}{L}\,\frac{(a+1)v_{a+1,b}-av_{a,b}}{\mu}+\frac{E_i}{L}\,\frac{(b+1)v_{a,b+1}-b v_{a,b}}{\mu}\\
+O\bigl(n^{-1}\log^2 n\bigr).
\end{multline}
We still need to consider the border values $a=k_1-1$ and/or $b=k_2-1$, in which case, given
$\bold s$, $v_{a+1,b}$ and $v_{a,b+1}$ are random. 

Introducing $V^{\alpha}=\cup_{\beta}V_{\alpha,\beta}\cup V_{\alpha,\bullet}$, 
$V^{\beta}=\cup_{\alpha}V_{\alpha,\beta}\cup V_{\bullet,\beta}$, where $\alpha<k_1$, $\beta<k_2$, 
we evaluate
\begin{align*}
v_{a,k_2}\cdot \sum_{j\text{ light}}X_j&=\left(\sum_{\ell\in V^a\cap I} 1_{\{Y_{\ell}=k_2\}}\right)
\left(\mu_i+\sum_{j\in J\cap\,(\cup_{\beta} V^{\beta})}\!\!X_j\right)\\
&=\mu_i\sum_{\ell\in V^a\cap I} 1_{\{Y_{\ell}=k_2\}}+\sum_{\ell\in V^a\cap I,\atop
j\in J\cap\,(\cup_{\beta} V^{\beta})}\!\!\!
X_j1_{\{Y_{\ell}=k_2\}}.
\end{align*}
In the second sum, given $\bold V$ and $\mu$, $X_j$ and $1_{\{Y_{\ell}=k_2\}}$ are independent.
Besides,  $\bigl|J\cap\,(\cup_{\beta} V^{\beta})\bigr|=v^i-v$, i.e.
the total number of in-heavy/out-light vertices, and $\bigl|V^a\cap I\bigr|=v_{a,\bullet}$, i.e.
the total number of out-heavy vertices, with light in-degree $a$.
Therefore
\begin{equation}\label{Evak2,u}
\begin{aligned}
\ex\left[v_{a,k_2}\cdot \sum_{j\text{ light}}X_j\biggr|\bold V,\mu\right]&
=\bigl(\mu_i+\ex[X](v^i-v)\bigr)\!\!\pr(Y=k_2)v_{a,\bullet}\\
&=E_i\!\!\pr(Y=k_2)v_{a,\bullet}.
\end{aligned}
\end{equation}
Analogously
\begin{equation}\label{Evk1b}
\ex\left[v_{k_1,b}\cdot \sum_{j\text{ light}}Y_j\biggr|\bold V,\mu\right]=E_o\!\!\pr(X=k_1)v_{\bullet,b}.
\end{equation}
These two identities mean that the equation \eqref{dva,b;X,Y, asym} holds for $a=k_1-1$ and
$b=k_2-1$, if we {\it define\/}
\[
v_{k_1,b}=\pr(X=k_1)v_{\bullet,b},\quad v_{a,k_2}=\pr(Y=k_2)v_{a,\bullet}.
\]
The remainder term aside, the expression in \eqref{dva,b;X,Y, asym} depends on $(\bold V,
\mu)$ only through $\bold s$. So we can, and will  replace conditioning on $(\bold V,\mu)$ by conditioning on $\bold s$ only.\\

{\bf (2)\/} Now let us turn to $v^\prime_{a, \bullet}$ and $v^\prime_{\bullet, b}$. 
Just as we've done with $v^\prime_{a,b}$, we write
\begin{equation}
dv_a := v^\prime_{a,\bullet} - v_{a, \bullet} =  \sum_j \left(1_{\{X^\prime_j = a, Y^\prime_j \geq k_2\}} - 1_{\{ X_j = a, Y_j \geq k_2 \}} \right),
\end{equation}
where
\begin{equation*}
1_{\{X^\prime_j=a, Y^\prime_j\geq k_2\}} -1_{\{X_j=a, Y_j \geq k_2\}} = \left\{
\begin{array}{ll}
      1 & j \in V^\prime_{a,\bullet}\text{ and } j \notin V_{a,\bullet} ,\\
      -1 & j \in V_{a,\bullet},\text{ and }j \notin V^\prime_{a,\bullet} , \\
      0 & \text{ otherwise. }\\
\end{array} 
\right.
\end{equation*}
So
\begin{multline*}
\ex\bigl[1_{\{X^\prime_j=a, Y^\prime_j\geq k_2\}} -1_{\{X_j=a, Y_j\geq k_2\}} \boldsymbol | \bold V,\mu,u\bigr]\\
 =1_{\{ j \notin V_{a,\bullet}\}}\!\pr ( j \in V^\prime_{a,\bullet}\boldsymbol | \bold V,\mu,
u ) 
- 1_{\{j \in V_{a,\bullet}\}}\!\pr ( j \notin V^\prime_{a,\bullet}\boldsymbol | \bold V,\mu, u).
\end{multline*}
Let  $j \neq u$. For $j\in V_{a,\bullet}$, the event $\{ j \notin V^\prime_{a,\bullet}\}$ is a non-disjoint
union of two events, ``there is an edge from $u$ to $j$'' , i.e. its deletion alone will pull the in-degree of $j$ below $a$, and  ``there are sufficiently many edges from $j$ to $u$'', i.e. such that their deletion will pull out-degree of $j$ below $k_2$. Therefore, by the conditional
independence of $E(u,j)$ and $E(j,u)$,
\begin{multline}\label{prob_breakdown_va}
\pr ( j \notin V^\prime_{a,\bullet}  \boldsymbol |  \bold V,\mu,
u) = \!\pr \bigl( E(u,j) \geq 1\boldsymbol | \boldsymbol V,\mu, u) 
 +\! \!\pr\bigl(E(u,j)=0\boldsymbol|\bold V,\mu,u\bigr)\\
 \times\left[\sum_{k\ge 0,\ell>0}\!\!1_{\{Y_j=k_2+k\}}\!\!\pr \bigl( E(j,u) = k+\ell \boldsymbol | \bold V,\bold \mu, u\bigr)\!\!\right]\!;
\end{multline}
here, by \eqref{margE(u,v)}, for the moderate $(\bold X,\bold Y)$,
\[
\pr \bigl( E(u,j) \geq 1\boldsymbol | \boldsymbol V,\mu, u)=1-\frac{\binom{\mu-Y_u}{a}}{\binom{\mu}
{a}}=\frac{aY_u}{\mu}+O\bigl(n^{-2}\log^6n)\bigr).
\]
Furthermore, the sum over $k,\ell$ equals
\begin{equation*}
 \sum_{k \geq 0, \ell \geq 1}\!\! 1_{\{Y_j=k_2+k}\}\frac{{X_u \choose k+\ell}{\mu-X_u \choose (k_2+k)-(k+\ell)}}{{\mu \choose k_2+k}} 
 =\frac{k_2 X_u}{\mu}  1_{\{ Y_j = k_2\}} + O\bigl( n^{-6}\log^2 n \bigr) 1_{\{Y_j \geq k_2\}}.
\end{equation*}
Hence, for $j\in V_{a,\bullet}$,
\begin{equation*}
\pr (j \notin V^\prime_{a,\bullet}  \boldsymbol | \bold V,\mu, u) = \frac{a Y_u}{\mu} + \frac{k_2 X_u}{\mu}  1_{\{Y_j = k_2\}} + O\bigl(n^{-2} \log^6n\bigr) 1_{\{Y_j \geq k_2\}}.
\end{equation*}
Analogously
\begin{equation*}
\begin{aligned}
\pr ( j \in V^\prime_{a,\bullet};\,& j \notin V_{a,\bullet}\boldsymbol | \bold X,\bold Y, u ) = \sum_{k \geq 1} 1_{\{X_j = a+k, \,Y_j\ge k_2\}}\!\pr\bigl(E(u,j)=k \boldsymbol | \bold V,\mu, u\bigr) \\&= \sum _{k\ge 1}\frac{ {Y_u \choose k} {\mu-Y_u \choose a}}{{\mu \choose a+k}} 1_{\{X_j = a+k,\, Y_j \geq k_2\}}  \\
& = \frac{(a+1)Y_u}{\mu} 1_{\{X_j = a+1, \,Y_j \geq k_2\}} + O\bigl(n^{-2} \log^6n\bigr) 1_{\{ X_j \geq a+1, \,Y_j \geq k_2\}}. 
\end{aligned}
\end{equation*}
Therefore for $j\neq u$, 
\begin{multline*}
\ex\bigl[1_{\{X^\prime_j=a, \,Y^\prime_j\geq k_2\}} -1_{\{X_j=a, \,Y_j\geq k_2\}} \boldsymbol | \bold V,
\mu, u\bigr] \\
 =\frac{(a+1)Y_u}{\mu} 1_{\{X_j = a+1,\, Y_j \geq k_2\}} - \frac{a Y_u}{\mu} 1_{\{X_j = a, \,Y_j \geq k_2\}}
 - \frac{k_2 X_u}{\mu}  1_{\{X_j = a, \,Y_j = k_2\}}\\
 + O\bigl(n^{-2} \log^2n\bigr)  1_{\{X_j \geq  a, \,Y_j \geq k_2\}}.
\end{multline*}
Adding up these equations for $j\neq u$,  we obtain 
\begin{multline*}
\ex\left[ \sum_{j \neq u} 1_{\{X^\prime_j=a, \,Y^\prime_j\geq k_2\}} -1_{\{X_j=a,\, Y_j\geq k_2\}} \boldsymbol | \bold V,\mu, u\right] \\
 =\frac{(a+1)Y_u}{\mu}\, v_{a+1, \bullet} - \frac{a Y_u}{\mu}\, v_{a, \bullet}
- \frac{k_2 X_u}{\mu}\,  v_{a, k_2} 
+ O\bigl( \eps_n);
\end{multline*}
like earlier, $v_{k_1,\bullet}$ is defined as the number of vertices $j$ with in-degree $X_j=k_1$ and out-degree $Y_j\ge k_2$.

For $j=u$, since we delete all of $u'$s incident edges, 
$
1_{\{X^\prime_u = a, \,Y^\prime_u \geq k_2\}} = 0, 
$
so that
\begin{equation*}
1_{\{X^\prime_u = a,\, Y^\prime_u \geq k_2\}}-1_{\{X_u = a, \,Y_u \geq k_2\}} = -1_{\{X_u = a, \,Y_u \geq k_2\}}=-1_{\{u\in V_{a,\bullet}\}}.
\end{equation*}
Hence, 
\begin{multline*}
\ex\bigl[dv_{a,\bullet}\boldsymbol|\bold V,\mu,u]=\ex\left[ \sum_{j} 1_{\{X^\prime_j=a, \,Y^\prime_j\geq k_2\}} -1_{\{X_j=a,\, Y_j\geq k_2\}}  \bigg | \bold V,\mu, u\right] \\
=-1_{\{ u \in V_{a, \bullet}\}}
+ \frac{(a+1)Y_u}{\mu}\,v_{a+1, \bullet} - \frac{a Y_u}{\mu}\, v_{a, \bullet}
 - \frac{k_2 X_u}{\mu} \, v_{a, k_2} + O(\eps_n).
\end{multline*}
It remains to average this identity over $u$. 
For $a\le k_1-2$, 
 \begin{equation}\label{a<k1-2}
\ex\left[\left(\sum_{j\text{ light}}Y_j\right)v_{a+1,\bullet}\biggr|\bold V,\mu\right]=
E_o v_{a+1,\bullet},
\end{equation}
with $E_o$ defined in \eqref{=:Eo}.  Now let $a=k_1-1$. Then, introducing $V_{\bullet,\bullet}$,
the set of all doubly heavy vertices,
\begin{multline*}
\left(\sum_{j\text{ light}}Y_j\right)v_{a+1,\bullet}=\left(\mu_o+\sum_{\alpha<k_1}\sum_{j\in V_{\alpha,\bullet}}Y_j\right)\sum_{\ell\in V_{\bullet,\bullet}}1_{\{X_{\ell}=k_1\}}\\
=\mu_o\sum_{\ell\in V_{\bullet,\bullet}}1_{\{X_{\ell}=k_1\}}+\sum_{\alpha<k_1}\,\sum_{j\in V_{\alpha,
\bullet},\,\ell\in V_{\bullet,\bullet}}Y_j1_{\{X_{\ell}=k_1\}}.
\end{multline*}
Therefore, using $|V_{\alpha,\bullet}|=v_{\alpha,\bullet}$ for $\alpha<k_1$, $|V_{\bullet,\bullet}|=v$, we have: for $a=k_1-1$
\begin{align*}
\ex\left[\left(\sum_{j\text{ light}}Y_j\right)v_{a+1,\bullet}\biggr|V,\mu\right]&=\mu_ov\!\pr(X=k_1)
+v\!\pr(X=k_1)\ex[Y] \sum_{\alpha<k_1}v_{\alpha,\bullet}\\
&=\bigl(\mu_o+\ex[Y](v^o-v)\bigr)v\!\pr(X=k_1)=E_ov\!\pr(X=k_1).
\end{align*}
Likewise for $a\le k_1-1$
\begin{multline*}
\ex\left[\left(\sum_{j\text{ light}}X_j\right)v_{a,k_2}\biggr|V,\mu\right]=
\bigl(\mu_i+\ex[X](v^i-v)\bigr)v_{a,\bullet}\pr(Y=k_2) \\ =E_iv_{a,\bullet}\pr(Y=k_2),
\end{multline*}
and of course
\[
\ex\left[\left(\sum_{j\text{ light}}Y_j\right)v_{a,\bullet}\biggr|V,\mu\right]=E_ov_{a,\bullet},
\]
as $v_{a,\bullet}$ is constant, given $\bold V$, $\mu$. Absorbing the negligible expected
contribution of $(\bold X,\bold Y)$ violating the condition \eqref{X,Ysmall}, we conclude that, for all $a\le k_1-1$,
\begin{multline}\label{Edva,bull,V,mu}
\ex\bigl[dv_{a,\bullet}\boldsymbol|\bold V,\mu]=-\frac{v_{a,\bullet}}{L}\\
+\frac{E_o}{L}\,\frac{(a+1)v_{a+1,\bullet}-av_{a,\bullet}}{\mu}-\frac{E_i}{L}\frac{k_2v_{a,k_2}}{\mu}
+O(\eps_n),
\end{multline}
where $v_{k_1,\bullet}:=v\pr(X=k_1)$, $v_{a,k_2}:=v_{a,\bullet}\pr(Y=k_2)$. And we have
a similar expression for $\ex\bigl[dv_{\bullet,b}\boldsymbol|\bold V,\mu]$
with $v_{\bullet,k_2}:=v\!\pr(Y=k_2)$, $v_{k_1,b}:=v_{\bullet, b}\!\pr(X=k_1)$.\\

\begin{Lemma}\label{Edvab,va,bull,vbull,b,s} With $\eps_n=n^{-1}\log^6n$,
\begin{align}
\ex\bigl[dv_{a,b}\boldsymbol |\bold s \bigr]&=1_{\{(a,b)=(0,0)\}}-\frac{v_{a,b}}{L}\,1_{\{(a,b)\neq (0,0)\}}
+\frac{E_o}{L}\,\frac{(a+1)v_{a+1,b}-av_{a,b}}{\mu}\notag\\
&+\frac{E_i}{L}\,\frac{(b+1)v_{a,b+1}-b v_{a,b}}{\mu}+O(\eps_n),\label{edvab}\\
\ex\bigl[dv_{a,\bullet}\boldsymbol|\bold s]&=-\frac{v_{a,\bullet}}{L}\notag\\
&+\frac{E_o}{L}\,\frac{(a+1)v_{a+1,\bullet}-av_{a,\bullet}}{\mu}-\frac{E_i}{L}\frac{k_2v_{a,k_2}}{\mu}
+O(\eps_n),\label{edva}\\
\ex\bigl[dv_{\bullet,b}\boldsymbol|\bold s]&=-\frac{v_{\bullet,b}}{L}\notag\\
&+\frac{E_i}{L}\,\frac{(b+1)v_{\bullet,b+1}-bv_{\bullet,b}}{\mu}-\frac{E_o}{L}\frac{k_1v_{k_1,b}}{\mu}
+O(\eps_n),\label{edvb}\\
\ex[dv\boldsymbol |\bold s]&=-\frac{E_o}{L}\,\frac{k_1v_{k_1,\bullet}}{\mu}-\frac{E_i}{L}\,
\frac{k_2v_{\bullet,k_2}}{\mu}+O(\eps_n),\label{Edv}\\
\ex[d\mu\boldsymbol |\bold s]&=-\frac{E_i}{L}-\frac{E_o}{L}+O(\eps_n).
\label{Edmu}
\end{align}
Here $L=L(\bold s)$ is the total number of non-isolated light vertices, i.e.
\begin{equation}\label{L(t)=}
L = \sum_{a < k_1, b<k_2, \atop (a,b) \neq (0,0)} v_{a,b} + \sum_{0\leq a < k_1} v_{a, \bullet} + \sum_{0 \leq b < k_2} v_{\bullet, b},
 \end{equation}
and
\begin{equation}\label{extrav}
\begin{aligned}
&v_{k_1,b}=v_{\bullet,b}\!\pr(X=k_1)\,, v_{a,k_2}:=v_{a,\bullet}\!\pr(Y=k_2),\\
&v_{\bullet,k_2}=v\!\pr(Y=k_2),\,, v_{k_1,\bullet}=v\!\pr(X=k_1),\\
&E_i=\mu_i+E[X](v^i-v),\quad E_o=\mu_o+E[Y](v^o-v),
\end{aligned}
\end{equation}
\end{Lemma}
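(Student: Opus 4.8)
The plan is to observe that \eqref{edvab}--\eqref{edvb} have, up to notation, already been established in Step~2, and to supply the two missing computations, for $dv$ and $d\mu$, by the same ``condition on $(\bold V,\mu)$, then average over the uniform light vertex $u$ and over $(\bold X,\bold Y)$'' scheme. Indeed, \eqref{dva,b;X,Y, asym} --- extended to the border indices $a=k_1-1$, $b=k_2-1$ by \eqref{Evak2,u}--\eqref{Evk1b} and the conventions $v_{k_1,b}:=v_{\bullet,b}\pr(X=k_1)$, $v_{a,k_2}:=v_{a,\bullet}\pr(Y=k_2)$ --- is precisely the right-hand side of \eqref{edvab}, conditioned on $(\bold V,\mu)$ rather than on $\bold s$, and \eqref{Edva,bull,V,mu} together with its mirror image gives the analogous $(\bold V,\mu)$-conditioned forms of \eqref{edva}--\eqref{edvb}. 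The passage from conditioning on $(\bold V,\mu)$ to conditioning on $\bold s$ is done by the tower rule $\ex[\,\cdot\mid\bold s]=\ex\bigl[\ex[\,\cdot\mid\bold V,\mu]\mid\bold s\bigr]$: the right-hand sides involve $(\bold V,\mu)$ only through the block cardinalities, $\mu$, $L$, and $E_i$, $E_o$, all of which (see \eqref{X,Y}, \eqref{extrav}) are functions of $\bold s$ alone, so the conditional expectation is unchanged by the replacement. This settles \eqref{edvab}--\eqref{edvb}.

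For \eqref{Edv} I would write $dv=\sum_j\bigl(1_{\{X'_j\ge k_1,\,Y'_j\ge k_2\}}-1_{\{X_j\ge k_1,\,Y_j\ge k_2\}}\bigr)$. A step only deletes edges, so no vertex ever enters $V$ and every summand is $\le 0$; for $j\ne u$ a doubly heavy $j$ leaves $V$ iff $E(u,j)\ge X_j-k_1+1$ or $E(j,u)\ge Y_j-k_2+1$. On the typical event \eqref{X,Ysmall}, the hypergeometric formula \eqref{margE(u,v)} gives $\pr(j\notin V'\mid\bold V,\mu,u)=\tfrac{k_1Y_u}{\mu}\,1_{\{X_j=k_1\}}+\tfrac{k_2X_u}{\mu}\,1_{\{Y_j=k_2\}}+(\text{lower order})$, where the subleading pieces --- second-order corrections to the tails, the overlap of the two events, and the full contribution of those $j$ with $X_j>k_1$ and $Y_j>k_2$ (which need two deletions) --- are bounded exactly as in the passage leading to \eqref{dva,b;X,Y,u, asym} and aggregate to $O(\eps_n)$; the term $j=u$ contributes $0$ because $u$ is light. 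Summing over $j$ and averaging over the uniform light vertex $u$, the product structure of $(\hat{\bold X},\hat{\bold Y})$ from \eqref{dist(X,Y)} makes $\sum_{u\text{ light}}Y_u$ independent of $|\{j\in V:X_j=k_1\}|$, so by \eqref{=:Eo} one gets $\ex\bigl[\sum_{u\text{ light}}Y_u\cdot|\{j\in V:X_j=k_1\}|\mid\bold V,\mu\bigr]=E_o\cdot v\pr(X=k_1)$ and, symmetrically via \eqref{=:Ei}, $\ex\bigl[\sum_{u\text{ light}}X_u\cdot|\{j\in V:Y_j=k_2\}|\mid\bold V,\mu\bigr]=E_i\cdot v\pr(Y=k_2)$; dividing by $L\mu$ and using $v_{k_1,\bullet}:=v\pr(X=k_1)$, $v_{\bullet,k_2}:=v\pr(Y=k_2)$ yields \eqref{Edv}, the passage to $\bold s$ being as above.

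For \eqref{Edmu}, deleting $u$ converts exactly $X_u+Y_u-E(u,u)$ non-star pairs into $\{\star,\star\}$, the $-E(u,u)$ correcting for loops at $u$, so $-d\mu=X_u+Y_u-E(u,u)$; on \eqref{X,Ysmall} the hypergeometric mean $\ex[E(u,u)\mid\bold X,\bold Y]=X_uY_u/\mu=O(\eps_n)$, hence $\ex[-d\mu\mid\bold X,\bold Y,u]=X_u+Y_u+O(\eps_n)$, and averaging $X_u$, $Y_u$ over the uniform light $u$ gives $E_i/L$, $E_o/L$ by \eqref{=:Ei}--\eqref{=:Eo}, which after the passage to $\bold s$ is \eqref{Edmu}.

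The genuine difficulty is not any one of these five lines but the uniform control of the $O(\eps_n)$ errors, with $\eps_n=n^{-1}\log^6n$, across all $(k_1+1)(k_2+1)$ coordinates at once: one must combine the tail bound \eqref{P(X>)<,P(Y>)<} (so the expected contribution of $(\bold X,\bold Y)$ violating \eqref{X,Ysmall} is negligible against $\eps_n$) with the a priori bounds $v_{\alpha,\beta}\le\mu/(\alpha+\beta+1)$, $v\le n$ and $\mu\ge n/\log n$, while keeping straight which border counts $v_{k_1,\cdot}$, $v_{\cdot,k_2}$ are honest cardinalities and which are the substituted quantities of \eqref{extrav}. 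Once that accounting is carried out, the five displays follow by collecting the pieces already assembled in Step~2 together with the two computations above.
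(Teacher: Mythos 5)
Your proof is correct and follows essentially the same approach as the paper: for \eqref{edvab}--\eqref{edvb} you (as the paper does) simply cite the Step~2 derivations and pass from $(\bold V,\mu)$-conditioning to $\bold s$-conditioning, and for \eqref{Edv}--\eqref{Edmu} you supply the ``similar and omitted'' computations using the identical indicator-decomposition / hypergeometric / average-over-$u$ machinery, with the right use of the conditional independence of $\hat{\bold X}$ and $\hat{\bold Y}$ and of \eqref{=:Ei}--\eqref{=:Eo}. The accounting of the dominant terms (only $X_j=k_1$ or $Y_j=k_2$ can leave $V$ by a single deletion; $-d\mu=X_u+Y_u-E(u,u)$ with $\ex[E(u,u)\mid\bold X,\bold Y]=X_uY_u/\mu=O(\eps_n)$ on \eqref{X,Ysmall}) and the aggregation of subleading terms into $O(\eps_n)$ are both sound.
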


\begin{proof} The first three formulas follow from \eqref{dva,b;X,Y,u, asym}, \eqref{Edva,bull,V,mu},
and the latter's counterpart  for $\ex[dv_{\bullet,b}\boldsymbol | \bold V,\mu]$, by replacing 
conditioning on $(\bold V,\mu)$ with that on $\bold s$. The proof of the last two equations is
similar and is omitted.
\end{proof}

\begin{Remark} As for the {\it likely\/} bounds  for $dv_{a,b},\dots, d\mu$,
it is clear  that, given $\bold V,\mu, u$, their absolute values are each $X_u+Y_u$ at most.
So, conditioned on an admissible $\bold s$,  the increments 
$|dv_{a,b}|,\dots, d\mu$ are of order $O(\log^2 n)$ with probability 
$\ge 1- e^{-0.5(\log^2 n)\log\log n}$.
\end{Remark}

The total number of variables $v_{a,b}$, $v_{a,\bullet}$, $v_{\bullet,b}$, $v$, $\mu$ is
$(k_1+1)(k_2+1)+1$, fast growing with $k_1$, $k_2$. Observe though that, $L^{-1}(t)$ aside, the RHS in the equations \eqref{Edv}, \eqref{Edmu} depend only on the 
$6$-dimensional $\mathbf R:=(v,v^i,v^o,\mu,\mu_i,\mu_o)$, since $\ex[X], \ex[Y], \pr(X=k_1), 
\pr(Y=k_2)$ are functions of $\mathbf R$ only. Remarkably, it follows from the telescopic
structure of the RHS's in \eqref{edvab}, \eqref{edva} and \eqref{edvb} that the same property holds for the conditional expected changes of $v^{i,o}$ and $\mu_{i,o}$. 

To show this, let us first compute $\ex[dv^a\boldsymbol |\bold s]$. Recall that $v^a = v_{a, \bullet} + \sum_b v_{a,b}$ is the number of (light) vertices with in-degree $a(< k_1)$. So, using 
\eqref{edvab} and \eqref{edva}, 
\begin{align*}
\ex[dv^a&\boldsymbol |\bold s]= - \frac{v_{a, \bullet}+ \sum_b v_{a,b}}{L}\\
&+ \frac{E_o}{L} \frac{(a+1) (v_{a+1, \bullet}+ \sum_b v_{a+1, b} ) - a (v_{a, \bullet} + \sum_b v_{a,b}) }{\mu} +O(\eps_n)\\ &=- \frac{v^a}{L} + \frac{E_o}{L} \frac{(a+1)v^{a+1}-a v^a}{\mu}+O(\eps_n).
\end{align*}
Since $\mu_i = \sum_a a v^a$, we have then
\begin{align}
\ex[d\mu_i&\boldsymbol | \bold s] = \sum_a\left[\frac{- a v^a}{L} + \frac{E_o}{L}\frac{a(a+1)v^{a+1}-a^2v^a}
{\mu}\right]+O(\eps_n)\notag\\
&=-\frac{\mu_i}{L}+\frac{E_o}{L}\sum_a \frac{(a+1) a v^{a+1} - a(a-1) v^a - a v^a}{\mu}+
O(\eps_n)\notag\\ 
&= -\frac{\mu_i}{L} + \frac{E_o}{L} \frac{k_1 (k_1-1) v^{k_1}}{\mu} - \frac{E_o}{L} \frac{\mu_i}{\mu}+O(\eps_n)\notag\\
& = -\frac{\mu_i}{L} \left( 1 + \frac{E_o}{\mu} \right) + \frac{E_o}{L} \frac{k_1(k_1-1) v^{k_1}}{\mu} +O(\eps_n)\notag\\ 
&= -\frac{\mu_i}{L} \left( 1 + \frac{E_o}{\mu} \right) + \frac{E_o k_1 (k_1-1)P(X=k_1) v^i}{L \mu}+O(\eps_n),\label{exdmui}
\end{align}
the last equality following from 
\[
v^{k_1}:=v_{k_1,\bullet}+\sum_bv_{k_1,b}
=\bigl(v+\sum_b v_{\bullet,b}\bigr)\!\pr(X=k_1)=v^i\!\pr(X=k_1).
\]
Exchanging ``i" and ``o", $k_1$ and $k_2$, and $Z_i$ and $Z_o$, we obtain 
\begin{equation}\label{exfmo}
\ex[d\mu_o\boldsymbol | \bold s] =-\frac{\mu_o}{L} \left( 1 + \frac{E_i}{\mu} \right) + \frac{E_i k_2 (k_2-1)P(Y=k_2) v^o}{L \mu}+O(\eps_n).
\end{equation}
Next 
\begin{multline*}
\ex\left[d \sum_b v_{\bullet, b}\biggl|\bold s\right] = - \frac{\sum_b v_{\bullet, b}}{L} + \frac{E_i}{L} \frac{k_2 v_{\bullet, k_2}}{\mu} - \frac{E_o}{L} \frac{k_1 \sum_b v_{k_1, b}}{\mu}+O(\eps_n) \\ = - \frac{\sum_b v_{\bullet, b}}{L} + \frac{E_i \, k_2 P(Y=k_2) v}{L \mu} - \frac{E_o k_1 P(X=k_1) \sum_b v_{\bullet, b}}{L \mu}+O(\eps_n).
\end{multline*}
Therefore
\begin{align}
\ex[dv^i\boldsymbol |\bold s] &= - \frac{\sum_b v_{\bullet, b}}{L} - \frac{E_o k_1 P(Z_i=k_1) v^i}{L \mu}+O(\eps_n)\notag \\&= - \frac{v^i - v}{L} - \frac{E_o k_1 P(X=k_1) v^i}{L \mu}
+O(\eps_n),\label{dvi/dt}
\end{align}
and we have a similar equation for $\ex[dv^o\boldsymbol |\bold s]$. For ease of reference, here is the resulting 
claim.
\begin{Lemma}\label{6eqtns}
\begin{equation}\label{system6}
\begin{aligned}
\ex[dv^i\boldsymbol |\bold s]&=- \frac{v^i - v}{L} - \frac{E_o k_1 P(X=k_1) v^i}{L \mu}+O(\eps_n),\\
\ex[dv^o\boldsymbol |\bold s]&=- \frac{v^o - v}{L} - \frac{E_i k_2 P(Y=k_2) v^0}{L \mu}+O(\eps_n),\\
\ex[d\mu_i\boldsymbol |\bold s] &=-\frac{\mu_i}{L} \left( 1 + \frac{E_o}{\mu} \right) + \frac{E_o (k_1)_2 P(X=k_1) v^i}{L \mu}+O(\eps_n),\\
 \ex[d\mu_o\boldsymbol |\bold s] &=-\frac{\mu_o}{L} \left( 1 + \frac{E_i}{\mu} \right) + \frac{E_i (k_2)_2 P(Y=k_2) v^o}{L \mu}+O(\eps_n),\\
\ex[dv\boldsymbol |\bold s] &
 = -\frac{E_o}{L} \frac{k_1 P(X=k_1) v}{\mu} - \frac{E_i}{L} \frac{k_2 P(Y=k_2) v}{\mu}+O(\eps_n), \\
\ex[d\mu\boldsymbol |\bold s]&= - \frac{E_i}{L} - \frac{E_o}{L}+O(\eps_n).
\end{aligned}
\end{equation}
\end{Lemma}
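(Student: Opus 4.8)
\medskip
\noindent\textbf{Proof strategy.}
The plan is to read off each identity in \eqref{system6} from the microscopic list of Lemma~\ref{Edvab,va,bull,vbull,b,s} by applying the linear combination that expresses the aggregate quantity in terms of the tracked counts: $\mu$ and $v$ are coordinates already, $v^i=v+\sum_{b<k_2}v_{\bullet,b}$ and $v^o=v+\sum_{a<k_1}v_{a,\bullet}$ by \eqref{vi,vo}, and $\mu_i=\sum_{a<k_1}a\,v^a$, $\mu_o=\sum_{b<k_2}b\,v^b$ with $v^a=v_{a,\bullet}+\sum_b v_{a,b}$, $v^b=v_{\bullet,b}+\sum_a v_{a,b}$ by \eqref{sumsdelDel}--\eqref{va,vb}. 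Since $k_1,k_2$ are fixed, each combination has $O(1)$ summands, so the $O(\eps_n)$ remainders in \eqref{edvab}--\eqref{Edmu} still add up to $O(\eps_n)$, and the task reduces to simplifying the main parts.

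Two of the equations need no work: the last line of \eqref{system6} is exactly \eqref{Edmu}, and the fifth line is \eqref{Edv} after inserting $v_{k_1,\bullet}=v\pr(X=k_1)$, $v_{\bullet,k_2}=v\pr(Y=k_2)$ from \eqref{extrav}. For $v^i$ I would sum \eqref{edvb} over $b<k_2$ and add \eqref{Edv}: the interior $E_i$-terms telescope to $k_2 v_{\bullet,k_2}=k_2 v\pr(Y=k_2)$, whose contribution cancels the matching $E_i$-term of $\ex[dv\mid\bold s]$, while the $E_o$-terms combine, via $\sum_b v_{k_1,b}=(v^i-v)\pr(X=k_1)$ and $v_{k_1,\bullet}=v\pr(X=k_1)$, into $-\tfrac{E_o}{L}\tfrac{k_1\pr(X=k_1)v^i}{\mu}$; the $v^o$ line is the mirror image under $i\leftrightarrow o$, $k_1\leftrightarrow k_2$. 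For $\mu_i$ I would first combine \eqref{edvab} and \eqref{edva} into $\ex[dv^a\mid\bold s]=-\tfrac{v^a}{L}+\tfrac{E_o}{L}\tfrac{(a+1)v^{a+1}-av^a}{\mu}+O(\eps_n)$ (already carried out in the text preceding \eqref{exdmui}) and then form $\sum_{a<k_1}a\,\ex[dv^a\mid\bold s]$; writing $a\bigl[(a+1)v^{a+1}-av^a\bigr]=(a+1)a\,v^{a+1}-(a-1)a\,v^a-a\,v^a$ makes the sum telescope to $-\tfrac{\mu_i}{L}-\tfrac{E_o}{L}\tfrac{\mu_i}{\mu}+\tfrac{E_o}{L}\tfrac{k_1(k_1-1)v^{k_1}}{\mu}$, and $v^{k_1}=v^i\pr(X=k_1)$ turns this into the third line of \eqref{system6} with $(k_1)_2=k_1(k_1-1)$; the fourth line is the analogue for $\mu_o$.

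The step that needs genuine care is the bookkeeping of the ``phantom'' boundary quantities $v_{k_1,b}$, $v_{a,k_2}$, $v_{\bullet,k_2}$, $v_{k_1,\bullet}$, $v^{k_1}$: one has to confirm that each telescoping leaves precisely the term named in \eqref{extrav}, which in turn rests on the identities $v^{k_1}=v^i\pr(X=k_1)$, $\sum_b v_{k_1,b}=(v^i-v)\pr(X=k_1)$, $v_{\bullet,k_2}=v\pr(Y=k_2)$ and their siblings. These hold because, conditioned on $(\bold V,\mu)$, the vectors $\hat{\bold X}$ and $\hat{\bold Y}$ are independent with i.i.d.\ components distributed as $X$ and $Y$ (see \eqref{dist(X,Y)}--\eqref{X,Y}), so the conditionally expected count of in-heavy vertices with in-degree exactly $k_1$ equals $v^i\pr(X=k_1)$, and likewise for the other boundary counts. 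The last point is that $\pr(X=k_1)$, $\pr(Y=k_2)$, $\ex[X]$, $\ex[Y]$, and hence $E_i$, $E_o$, are functions of $\bold s$ only through $\bold R=(v,v^i,v^o,\mu,\mu_i,\mu_o)$ (via $z_i$, $z_o$ from \eqref{zi,zo,roots}), so apart from the explicit $L(\bold s)^{-1}$ the right-hand sides of \eqref{system6} depend on $\bold R$ alone---the feature that makes the ODE reduction in the next section possible. I expect the most error-prone spot to be the coefficient arithmetic in the $\mu_i$ and $\mu_o$ lines, namely getting the falling factorial $(k_1)_2=k_1(k_1-1)$ (rather than $k_1^2$ or $k_1(k_1+1)$) out of the double telescoping.
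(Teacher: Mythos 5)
Your proposal is correct and follows the same route as the paper: first obtain $\ex[dv^a\mid\bold s]$ by combining \eqref{edvab} over $b$ with \eqref{edva}, then form $\sum_a a\,\ex[dv^a\mid\bold s]$ with the same double telescoping $a[(a+1)v^{a+1}-av^a]=(a+1)a\,v^{a+1}-(a-1)a\,v^a-a\,v^a$ to get the $\mu_i$ line, and obtain $v^i$ by summing \eqref{edvb} over $b$ and adding \eqref{Edv}, with the boundary counts from \eqref{extrav}. The cancellation of the $E_i$-terms and the combination of the $E_o$-terms via $\sum_b v_{\bullet,b}=v^i-v$ is exactly as you describe, and the falling factorial $(k_1)_2$ does come out of the telescoping precisely as you anticipate.
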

To be sure, $L=L(\bold s)$, the number of non-isolated light vertices at state $\bold s$, is not a function of $(v^{i,o}, \mu_{i,o}, v,\mu)$ only. Fortunately its role is confined to being a sort of scaling parameter, and
to a substantial degree we will be able to view these equations as describing stochastic
dynamics of the leading parameter $\bold R:=(v^{i,o}, \mu_{i,o}, v,\mu)$. 

Finally, we observe
that, by \eqref{PX,PYclose} and Corollary \ref{E[X],E[Y]=appr}, we can, and will replace $X$ and $Y$ with $Z_i$ and $Z_o$ respectively, at the cost of an extra error term $O(n^{-1}\log n)$, absorbed by $O(\eps_n)=O\bigl(n^{-1}\log^2 n\bigr)$.

\section{Deterministic version}
Excluding the near-terminal moments $t$, the random variables $v^{i,o}(t)$, $\mu_{i,o}(t)$, $v(t)$ and $\mu(t)$ are all of order $n$, while the RHS expressions in \eqref{system6} for their
one-step expected changes are bounded. Intuitively this suggests that a deterministic trajectory
defined as a solution of the system differential equations below is a likely, relatively close, 
approximation of the random deletion process for those $t$'s:
\begin{equation}\label{detsystem}
\begin{aligned}
\frac{d v_{0,0}}{dt} &= 1+ \frac{E_o}{L} \frac{v_{1,0} }{\mu}  + \frac{E_i}{L} \frac{v_{0,1}}{\mu}, \\
\frac{d v_{a, b}}{dt} &= -\frac{v_{a, b}}{L} + \frac{E_o}{L}\frac{(a+1) v_{a+1, b} - a v_{a, b}}{\mu}  + \frac{E_i}{L} \frac{(b+1) v_{a, b+1} - b v_{a, b}}{\mu}, \\
\frac{d v_{a, \bullet}}{dt} &= -\frac{v_{a, \bullet}}{L} + \frac{E_o}{L} \frac{(a+1) v_{a+1, \bullet} - a v_{a, \bullet}}{\mu} - \frac{E_i}{L}\frac{k_2 v_{a, k_2}}{\mu}, \\
\frac{d v_{ \bullet,b}}{dt} &= -\frac{v_{\bullet,b}}{L} + \frac{E_i}{L}\frac{(b+1) v_{ \bullet,b+1} - b v_{\bullet,b}}{\mu}- \frac{E_o}{L} \frac{k_1 v_{k_1,b}}{\mu}, \\
\frac{dv}{dt}&=-\frac{E_o}{L}\frac{k_1v_{k_1,\bullet}}{\mu}-\frac{E_i}{L}\frac{k_2v_{\bullet,k_2}}{\mu}\\
&=-\frac{E_o}{L}\frac{k_1P(Z_i=k_1)v}{\mu}-\frac{E_i}{L}\frac{k_2P(Z_o=k_2)v}{\mu},\\
\frac{d\mu}{dt}&=-\frac{E_i}{L}-\frac{E_o}{L},
\end{aligned}
\end{equation}
where $E_{i,o}\,,E_o$ and the border parameters $v_{\alpha,\beta}$ are defined in
\eqref{extrav}, with $X$, $Y$ replaced by $Z_i$ and $Z_o$. 

We took liberty using the old notations, $v_{a,b}$ etc.,  for these non-random variables. In the next
section we will adorn these functions with a bar, $\bar v_{a,b}$ etc., since our task will be to analyze likely magnitude of $v_{a,b}(t)-\bar v_{a,b}(t)$, etc. at integer $t$, $v_{a,b}(t)$ etc. being  components of the random $\bold s(t)$.

The corresponding system for $\bold R=(v^i,v^o,\mu_i,\mu_o,v,\mu)$ is
\begin{equation}\label{system6}
\begin{aligned}
\frac{dv^i}{dt}&=- \frac{v^i - v}{L} - \frac{E_o k_1 P(Z_i=k_1) v^i}{L \mu},\\
\frac{dv^o}{dt}&=- \frac{v^o - v}{L} - \frac{E_i k_2 P(Z_o=k_2) v^0}{L \mu},\\
\frac{d\mu_i}{dt} &=-\frac{\mu_i}{L} \left( 1 + \frac{E_o}{\mu} \right) + \frac{E_o (k_1)_2 P(Z_i=k_1) v^i}{L \mu},\\
 \frac{d\mu_o}{dt} &=-\frac{\mu_o}{L} \left( 1 + \frac{E_i}{\mu} \right) + \frac{E_i (k_2)_2 P(Z_o=k_2) v^o}{L \mu},\\
\frac{dv}{dt}&
 = -\frac{E_o}{L} \frac{k_1 P(Z_i=k_1) v}{\mu} - \frac{E_i}{L} \frac{k_2 P(Z_o=k_2) v}{\mu},
\\
\frac{d\mu}{dt}&= - \frac{E_i}{L} - \frac{E_o}{L}.
\end{aligned}
\end{equation}
We remind the reader that $Z_i$ and $Z_o$ are $\text{Poi}(z_i)$ and $\text{Poi}(z_o)$ conditioned, respectively, on the events ``$\text{Poi}(z_i)\ge k_1$'' and ``$\text{Poi}(z_o)\ge k_2$'',
with $z_i$, $z_o$ chosen such that
\begin{equation}\label{zi,zo;ultim}
\begin{aligned}
\ex[Z_i]&=\frac{z_iP(\text{Poi}(z_i)\ge k_1-1)}{P(\text{Poi}(z_i)\ge k_1)}=\frac{z_i f_{k_1-1}(z_i)}
{f_{k_1}(z_i)}
=\frac{\mu-\mu_i}{v^i},\\
\ex[Z_o]&=\frac{z_oP(\text{Poi}(z_o)\ge k_2-1)}{P(\text{Poi}(z_o)\ge k_2)}=\frac{z_o f_{k_2-1}(z_o)}
{f_{k_2}(z_o)}
=\frac{\mu-\mu_o}{v^o},\\
f_k(z)&:=\sum_{j\ge k}\frac{z^j}{j!}.
\end{aligned}
\end{equation}
i.e. $z_i,z_o$ are ultimately  functions of $\mathbf R$. Contrary to its intimidating appearance, the system
\eqref{system6} has rather remarkable properties that will enable us to to obtain both
explicit and qualitative results on the trajectories behavior. 

\subsection{Conservation laws} As we are about to see, the rates $dz_{i,o}/dt$ provide the keys.  Using the first line in \eqref{zi,zo;ultim}, we have 
\begin{align*}
\frac{dE[Z_i] }{dt} &= \frac{1}{v^i} \frac{d(\mu-\mu_i)}{dt} - \frac{E[Z_i]}{v^i} \frac{d}{dt} v^i \\&= \frac{1}{v^i} \left( -\frac{E_i}{L}- \frac{E_o}{L}  +\frac{\mu_i}{L} \left( 1 + \frac{E_o}{\mu} \right) - \frac{E_o}{L} \frac{k_1(k_1-1) P(Z_i=k_1) v^{i}}{\mu} \right) \\&- \frac{E[Z_i]}{v^i} \left(- \frac{v^i - v}{L} - \frac{E_o k_1 P(Z_i=k_1) v^i}{L \mu}  \right) \\&= \frac{1}{v^i L} \left( - E_i - E_o + \mu_i + \frac{\mu_i E_o}{\mu} - \frac{E_o k_1 (k_1-1) P(Z_i=k_1) v^i}{\mu} \right. \\& \left.+ E[Z_i] (v^i-v) + \frac{E[Z_i] E_o k_1 P(Z_i=k_1) v^i}{\mu} \right).
\end{align*}
Using again $\ex[Z_i]=\tfrac{\mu-\mu_i}{v^i}$, and $E_i= \mu_i+E[Z_i] (v^i-v)$, we transform
the above expression into 
\begin{align*}
\frac{dE[Z_i] }{dt}& = \frac{E_o}{v^i L} \left( - 1 + \frac{\mu_i}{\mu} - \frac{(k_1)_2 P(Z_i=k_1) v^i}{\mu}+\frac{E[Z_i] k_1 P(Z_i=k_1)v^i}{\mu} \right) \\&= \frac{E_o E[Z_i]}{L \mu} \left( - \frac{(\mu-\mu_i)}{v^i E[Z_i]} - \frac{(k_1)_2 P(Z_i=k_1)}{ E[Z_i]}+k_1 P(Z_i=k_1) \right) \\ &= - \frac{E_o E[Z_i]}{L \mu} \left( 1 - k_1 P(Z_i=k_1) + \frac{(k_1)_2 P(Z_i=k_1)}{ E[Z_i]} \right)\\
&=- \frac{E_o E[Z_i]}{L \mu} \left(1-\frac{z_iP(\text{Poi}(z_i)=k_1-1)}{P(\text{Poi}(z_i)\ge k_1)}+
\frac{z_iP(\text{Poi}(z_i)=k_1-2)}{P(\text{Poi}(z_i)\ge k_1-1)}\right)\\
&=- \frac{E_o z_i E[Z_i]}{L \mu}\,\frac{d}{dz_i}\bigl(\log z_i - \log P(\text{Poi}(z_i) \geq k_1)+\log P(\text{Poi}(z_i)\geq k_1-1)\bigr)\\
&=- \frac{E_o z_i}{L \mu}\,E[Z_i] \frac{d\log E[Z_i]}{dz_i}
=-\frac{E_oz_i}{L\mu}\cdot \frac{d\ex[Z_i]}{dz_i}.
\end{align*}
Of course, the analogous identity holds for $d E[Z_o]/dt$. 

In view of \eqref{E[Z]grows}, implicit in the above sequence of equalities is a general formula 
for $Z=\text{Poi}(z)$ truncated at $k$:
\begin{equation}\label{Var(Z)}
\text{Var}(Z)=\ex[Z](1-\phi_k(z)+\phi_{k-1}(z)),\,\, \phi_r(z):=\frac{z\!\pr(\text{Poi}(z)=r-1)}
{\pr(\text{Poi}(z)\ge r)}.
\end{equation}

Since, by \eqref{E[Z]grows}, both $d E[Z_i]/dz_i$ and $d E[Z_o]/dz_o$ are positive, we have 
\begin{Corollary}\label{dzi/dt,dzo/dt}
\[
\frac{d z_i}{dt} = - \frac{E_o z_i}{L \mu}, \quad \frac{d z_o}{dt} = - \frac{E_i z_o}{L \mu}. 
\]
\end{Corollary}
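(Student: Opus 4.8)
The plan is to read the corollary off from the chain of identities just established for $dE[Z_i]/dt$, together with the positivity in \eqref{E[Z]grows}. First I would record that $z_i$ is a well-defined, continuously differentiable function of $t$ along the ODE trajectory. By \eqref{zi,zo;ultim}, $z_i$ is the unique positive solution of $\psi_{k_1}(z_i)=\ex[Z_i]=(\mu-\mu_i)/v^i$; since $\psi_{k_1}'=dE[Z_i]/dz_i=z_i^{-1}\var(Z_i)>0$ by \eqref{E[Z]grows}, the map $z\mapsto\psi_{k_1}(z)$ is a smooth strictly increasing bijection of $(0,\infty)$ onto $(k_1,\infty)$ with nowhere-vanishing derivative. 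Because $\mathbf R=(v^i,v^o,\mu_i,\mu_o,v,\mu)$ solves the system \eqref{system6} and so depends smoothly on $t$ (with $v^i$ bounded away from $0$ and $\mu-\mu_i>k_1v^i$ on the relevant range), the implicit function theorem gives $z_i\in C^1$ in $t$, and likewise $z_o$.

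Then I would apply the chain rule $\dfrac{d\ex[Z_i]}{dt}=\dfrac{d\ex[Z_i]}{dz_i}\cdot\dfrac{dz_i}{dt}$ and equate it with the identity derived immediately above the corollary, namely $\dfrac{d\ex[Z_i]}{dt}=-\dfrac{E_o z_i}{L\mu}\,\dfrac{d\ex[Z_i]}{dz_i}$. Since $\dfrac{d\ex[Z_i]}{dz_i}=z_i^{-1}\var(Z_i)$ is strictly positive, in particular nonzero, I may cancel this common factor from both sides to obtain $\dfrac{dz_i}{dt}=-\dfrac{E_o z_i}{L\mu}$. The formula for $dz_o/dt$ follows by the same argument with the roles of ``$i$'' and ``$o$'', and of $k_1$ and $k_2$, interchanged, using the analogous expression for $d\ex[Z_o]/dt$ noted just before the corollary.

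The computation is essentially finished already, so there is no genuine obstacle here beyond bookkeeping; the one point worth an explicit sentence is the legitimacy of the cancellation, which rests precisely on \eqref{E[Z]grows} guaranteeing $d\ex[Z_i]/dz_i\neq 0$. If one prefers to sidestep the implicit function theorem, an equivalent route is to \emph{define} $z_i(t)$ through \eqref{zi,zo;ultim}, differentiate that relation directly, $\psi_{k_1}'(z_i)\,\dot z_i=\dfrac{d}{dt}\dfrac{\mu-\mu_i}{v^i}$, and substitute the right-hand sides of \eqref{system6}; this reproduces the stated $dz_i/dt$ after exactly the same simplifications that produced the formula for $d\ex[Z_i]/dt$.
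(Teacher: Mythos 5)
Your argument is exactly the paper's: the corollary is obtained by applying the chain rule $\tfrac{d\ex[Z_{i,o}]}{dt}=\tfrac{d\ex[Z_{i,o}]}{dz_{i,o}}\cdot\tfrac{dz_{i,o}}{dt}$ to the identity for $d\ex[Z_{i,o}]/dt$ derived immediately above, then cancelling the strictly positive factor $d\ex[Z_{i,o}]/dz_{i,o}=z_{i,o}^{-1}\var(Z_{i,o})$ from \eqref{E[Z]grows}. Your additional remarks on the implicit-function-theorem justification of $z_i,z_o\in C^1$ are sound but just make explicit what the paper leaves tacit.
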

\begin{proof} Apply $\tfrac{d\ex[Z_{i,o}]}{dt}=\tfrac{d\ex[Z_{i,o}]}{dz_{i,o}}\cdot \tfrac{dz_{i,o}}{dt}$.
\end{proof}

These surprisingly simple formulas yield that $\ex[Z_i]$ and $\ex[Z_o]$ both decrease as $t$ increases.
So $E_i(t)$, the total in-degree of the in-light vertices plus $\ex[Z_i]$ times the total
number of the in-light/out-heavy vertices, is $O(L(t))$, $L(t)$ being the total number of non-isolated
light vertices, uniformly for $t<T=\sup\{t:\,L(t)>0\}$;  $E_o(t)=O(L(t))$ as well. Since also
$v, v^i, v^o$ are $O(\mu)$, it follows then
that the RHS's of the differential equations in \eqref{system6} are bounded, {\it uniformly\/}
for $t<T$. In fact, since $v_{a,b}(t)=O(\mu)$ for $(a,b)\neq (0,0)$ as well, the RHS's of the
detailed differential equations \eqref{detsystem} are bounded as well. Using the definition of
$L(t)$ in \eqref{L(t)=}, we conclude that $|L'(t)|$ is bounded uniformly for $t<T$. Repeatedly 
differentiating both sides of the system \eqref{detsystem}, we conclude that all fixed order
derivatives of $v_{0,0}(t),\dots, \mu(t)$, whence of $L(t)$, are bounded for $t<T$.

These key qualitative results aside, Corollary \ref{dzi/dt,dzo/dt}  combined with the two bottom equations in \eqref{system6} also produces a crucial pair of {\it integrals\/}  of the dynamic system: 
\begin{Lemma}\label{twoconst}
The following two functions of $\bold s$ are constant along the trajectory $\bold s(t)$:
\begin{equation*}
\Phi_1(\bold s)=\frac{nz_i z_o}{\mu}, \qquad \Phi_2(\bold s)=\frac{np_{k_1}(z_i) p_{k_2}(z_o)}{v},
\end{equation*}
where $p_k(z):= \sum_{j \geq k} e^{-z} z^j/j! = Pr(\text{Poi}(z) \geq k)$. 
\end{Lemma}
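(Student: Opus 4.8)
The plan is to verify that $\frac{d}{dt}\log\Phi_1\equiv 0$ and $\frac{d}{dt}\log\Phi_2\equiv 0$ along every solution of \eqref{detsystem}, on the interval $t<T:=\sup\{t:L(t)>0\}$ where the trajectory lives and where $z_{i},z_{o}$ are well defined through \eqref{zi,zo;ultim}; the factor $n$ being constant, it plays no role. For $\Phi_1=nz_iz_o/\mu$ the claim is immediate: $\frac{d}{dt}\log\Phi_1=\frac{1}{z_i}\frac{dz_i}{dt}+\frac{1}{z_o}\frac{dz_o}{dt}-\frac{1}{\mu}\frac{d\mu}{dt}$, and substituting $\frac{1}{z_i}\frac{dz_i}{dt}=-\frac{E_o}{L\mu}$, $\frac{1}{z_o}\frac{dz_o}{dt}=-\frac{E_i}{L\mu}$ from Corollary \ref{dzi/dt,dzo/dt} together with $\frac{d\mu}{dt}=-\frac{E_i+E_o}{L}$ from the last line of \eqref{system6} gives exact cancellation. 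So essentially all the bookkeeping is for $\Phi_2$.

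For $\Phi_2=np_{k_1}(z_i)p_{k_2}(z_o)/v$, the one extra ingredient I would record first is the logarithmic derivative of $p_k$. From $p_k'(z)=p_{k-1}(z)-p_k(z)$ and $\ex[Z]=\psi_k(z)=z\,p_{k-1}(z)/p_k(z)$ for $Z=\text{Poi}(z)$ truncated at $k$ (using $f_k=e^zp_k$, cf.\ \eqref{zi,zo;ultim}) one gets
\[
\frac{d}{dz}\log p_k(z)=\frac{p_{k-1}(z)}{p_k(z)}-1=\frac{\ex[Z]}{z}-1,
\qquad\text{equivalently}\qquad k\,\pr(Z=k)=\ex[Z]-z
\]
(the right-hand quantity being exactly $\phi_k(z)$ from \eqref{Var(Z)}). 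Combining this with $\frac{dz_i}{dt}=-\frac{E_oz_i}{L\mu}$ yields $\frac{d}{dt}\log p_{k_1}(z_i)=-\frac{E_o}{L\mu}\bigl(\ex[Z_i]-z_i\bigr)$, and symmetrically $\frac{d}{dt}\log p_{k_2}(z_o)=-\frac{E_i}{L\mu}\bigl(\ex[Z_o]-z_o\bigr)$, while the $\frac{dv}{dt}$ line of \eqref{system6} gives $-\frac{d}{dt}\log v=\frac{E_ok_1\pr(Z_i=k_1)}{L\mu}+\frac{E_ik_2\pr(Z_o=k_2)}{L\mu}$. Adding the three contributions and collecting the $E_o$- and $E_i$-terms leaves
\[
\frac{d}{dt}\log\Phi_2=-\frac{E_o}{L\mu}\bigl(\ex[Z_i]-z_i-k_1\pr(Z_i=k_1)\bigr)-\frac{E_i}{L\mu}\bigl(\ex[Z_o]-z_o-k_2\pr(Z_o=k_2)\bigr),
\]
and each parenthesis vanishes by the displayed identity $k\,\pr(Z=k)=\ex[Z]-z$, proving $\Phi_2$ constant.

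I do not expect a genuine obstacle here: Corollary \ref{dzi/dt,dzo/dt}, together with the variance identity \eqref{Var(Z)} behind it, has already done the hard analytic work of pinning down $dz_{i,o}/dt$, after which both conservation laws drop out of elementary manipulation. The only points deserving care are (i) keeping straight that $\pr(Z_i=k_1)$ and $\pr(Z_o=k_2)$ in the $dv/dt$-equation are \emph{truncated}-Poisson probabilities, so that the relation $k\,\pr(Z=k)=\ex[Z]-z$ (a restatement of $p_k'=p_{k-1}-p_k$) is precisely what closes the computation; and (ii) recording that "constant along $\bold s(t)$" is understood on the interval $t<T$ where $L(t)>0$, consistent with the standing assumptions under which Corollary \ref{dzi/dt,dzo/dt} was derived.
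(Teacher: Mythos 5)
Your proof is correct and follows essentially the same route as the paper: both reduce the claim to Corollary \ref{dzi/dt,dzo/dt} plus the logarithmic derivative of $p_k$, and the only difference is cosmetic — you record the log-derivative as $\ex[Z]/z-1$ and invoke $k\,\pr(Z=k)=\ex[Z]-z$ at the final cancellation, while the paper writes the log-derivative directly as $k\,\pr(Z=k)/z$ (i.e.\ as $\phi_k(z)/z$), making the cancellation with the $dv/dt$ equation immediate. Same identity, same two-line bookkeeping.
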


\begin{proof}
For $\Phi_1(\bold s)$, note that
\begin{equation*}
\frac{1}{z_i} \frac{dz_i}{dt} + \frac{1}{z_o} \frac{dz_o}{dt} = -\frac{E_o}{L \mu} - \frac{E_i}{L \mu} = \frac{1}{\mu} \frac{d \mu}{dt}.
\end{equation*}
Consequently,
\begin{equation*}
\frac{d}{dt} \log \left( \frac{z_i z_o}{\mu} \right) = 0 \implies \frac{z_i z_o}{\mu} \equiv \text{constant}.
\end{equation*}
Turn to $\Phi_2(\bold s)$. We already used the identity
\[
\frac{d p_k(z)}{dz}=\frac{P(\text{Poi}(z)= k-1)}{\pr(\text{Poi}{z}\ge k)}=\frac{k_1P(Z(z)=k)}{z},
\]
where $Z(z)$ is $\text{Poi}(z)$, conditioned on ``$\text{Poi}(z)\ge k$''.  Applying it again, we have
\[
\frac{d\log p_{k_{1,2}}(z_{i,o})}{dt} = \frac{k_{1,2}P(Z_{i,o}=k_{1,2})}{z_{i,o}}\cdot
\left(-\frac{E_{o,i}z_{i,o}}{L\mu}\right)=-\frac{k_{1,2}P(Z_{i,o}=k_{1,2})E_{o,i}}{L\mu}.
\]
Therefore
\begin{equation*}
\frac{d}{dt}\bigl[ \log  p_{k_1}(z_i)  + \log  p_{k_2}(z_o) \bigr]= \frac{1}{v} \frac{dv}{dt} = \frac{d}{dt} \log v,
\end{equation*}
implying that $p_{k_1}(z_i) p_{k_2}(z_o)/v$ is constant. 
\end{proof}

\subsection{When does the trajectory terminates at a finite time?}  Corollary \ref{dzi/dt,dzo/dt} and Lemma \ref{twoconst} enable us to obtain a key criterion for finiteness of  $T\,(:=\sup\{t:\,L(t)>0\})$.
\begin{Lemma}\label{ziobound}
If $z_i(t),\,z_o(t)>0$ are bounded away from zero uniformly for $t<T$,  then 
\begin{align}
T\le\, \mu&(0)\log\frac{z_i(0)z_o(0)}{\inf_t (z_i(t)z_o(t))}<\infty,\label{T<oo}\\
L(T-)=\mu_i(T-)&=\mu_o(T-)=0, \qquad v(T-),\,\mu(T-)>0. \label{leftlimits}
\end{align}
\end{Lemma}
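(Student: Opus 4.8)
The plan is to get the finite-time bound \eqref{T<oo} from Corollary~\ref{dzi/dt,dzo/dt} plus one elementary inequality, and then to read off \eqref{leftlimits} from the two conservation laws of Lemma~\ref{twoconst} together with the smoothness of the trajectory on $[0,T)$ already established. The crucial inequality is
\[
E_i+E_o\ \ge\ L .
\]
Indeed, by \eqref{extrav}, $E_i=\mu_i+E[Z_i](v^i-v)$ is precisely the total in-degree carried by all light vertices (the in-light ones contribute $\mu_i$; each of the $v^i-v$ in-heavy/out-light vertices contributes, on average, $E[Z_i]$), and symmetrically $E_o$ is the total out-degree of all light vertices. Writing $E_i+E_o$ as a linear combination of the $v_{a,\bullet}$, $v_{\bullet,b}$, $v_{a,b}$, the coefficient of $v_{a,\bullet}$ is $a+E[Z_o]\ge 1$, that of $v_{\bullet,b}$ is $b+E[Z_i]\ge 1$, and that of $v_{a,b}$ is $a+b$, which is $\ge1$ unless $(a,b)=(0,0)$, in which case it vanishes; since by \eqref{L(t)=} $L$ is the same combination with all coefficients $1$ and no $v_{0,0}$ term, the inequality follows. (Here $E[Z_i]\ge k_1$ and $E[Z_o]\ge k_2$ with $k_1,k_2\ge1$; the degenerate cases $k_1k_2=0$ are covered by $c^*(0,k_2)=c^*(1,k_2)$ of Theorem~\ref{broadthm}.)

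\textbf{Proof of \eqref{T<oo}.} By Corollary~\ref{dzi/dt,dzo/dt},
\[
\frac{d}{dt}\log\bigl(z_i z_o\bigr)=\frac{\dot z_i}{z_i}+\frac{\dot z_o}{z_o}=-\frac{E_i+E_o}{L\mu}\ \le\ -\frac1\mu ,
\]
using $E_i+E_o\ge L$. Moreover $\dot\mu=-(E_i+E_o)/L\le-1$, so $\mu$ is decreasing and $\mu(t)\le\mu(0)$ for $t<T$; hence $\tfrac{d}{dt}\log(z_iz_o)\le-1/\mu(0)$. Integrating over $[0,t]$ and using $z_i(t)z_o(t)\ge\inf_\tau(z_i(\tau)z_o(\tau))>0$ gives $t\le\mu(0)\log\frac{z_i(0)z_o(0)}{\inf_\tau(z_i(\tau)z_o(\tau))}$ for every $t<T$, which is \eqref{T<oo}.

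\textbf{Proof of \eqref{leftlimits}.} By Lemma~\ref{twoconst}, $\Phi_1=nz_iz_o/\mu$ is constant, so $\mu(t)=\mu(0)\,z_i(t)z_o(t)/(z_i(0)z_o(0))\ge\mu(0)\,\inf_\tau(z_i(\tau)z_o(\tau))/(z_i(0)z_o(0))>0$ on $[0,T)$; likewise $\Phi_2=np_{k_1}(z_i)p_{k_2}(z_o)/v$ is constant, so $v(t)=v(0)\,p_{k_1}(z_i(t))p_{k_2}(z_o(t))/\bigl(p_{k_1}(z_i(0))p_{k_2}(z_o(0))\bigr)$, and since $z_i,z_o$ are bounded away from $0$ and $p_{k_1},p_{k_2}$ are positive and nondecreasing, $v(t)$ is bounded below by a positive constant there. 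As $\mu$ and $v$ are continuous on $[0,T]$ (all fixed-order derivatives of $v_{0,0},\dots,\mu$ being bounded on $[0,T)$ and $T<\infty$), letting $t\to T$ yields $\mu(T-),v(T-)>0$. The same continuity and finiteness of $T$ extend $L$ continuously to $[0,T]$, and by the above the right-hand sides of \eqref{detsystem} remain smooth at $\bold s(T-)$ (where $\mu,v>0$ and, by hypothesis, $z_i,z_o$ stay away from $0$); so if $L(T-)>0$ the solution would continue past $T$ with $L$ continuous, hence positive on a right neighbourhood of $T$, contradicting $T=\sup\{t:L(t)>0\}$. Therefore $L(T-)=0$, and finally $\mu_i=\sum_{1\le a<k_1}a\,v^a\le(k_1-1)\sum_{a\ge1}v^a\le(k_1-1)L$, and symmetrically $\mu_o\le(k_2-1)L$, so $\mu_i(T-)=\mu_o(T-)=0$.

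\textbf{Main obstacle.} The real content sits in the inequality $E_i+E_o\ge L$: it is what upgrades the innocuous identity for $\tfrac{d}{dt}\log(z_iz_o)$ into a $\mu(0)$-uniform decay rate and so forces $T<\infty$, and proving it needs the conceptually transparent but notation-heavy identification of $E_i+E_o$ with the total degree of the light vertex set. The only other delicate point is the continuation argument giving $L(T-)=0$, where one must verify that the right-hand sides of \eqref{detsystem} stay regular at the left endpoint $\bold s(T-)$ — which is exactly why $\mu(T-),v(T-)>0$ must be proved first.
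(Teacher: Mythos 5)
Your proof is correct and follows the same route as the paper's: the pivotal inequality $E_i+E_o\ge L$ (established by the same coefficient-by-coefficient comparison, using $E[Z_i]\ge k_1\ge 1$, $E[Z_o]\ge k_2\ge 1$), integration of Corollary~\ref{dzi/dt,dzo/dt} together with $\mu(\tau)\le\mu(0)$ to obtain \eqref{T<oo}, and the two conservation laws plus the boundedness of $L'$ to obtain \eqref{leftlimits}. The only cosmetic differences are that you spell out $\dot\mu\le -1$ to justify $\mu(t)\le\mu(0)$ and give the explicit bounds $\mu_i\le(k_1-1)L$, $\mu_o\le(k_2-1)L$ for the last step, both of which the paper leaves implicit.
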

\begin{Remark} Unlike $v(t)$, $\mu(t)$, the functions $L(t)$, $\mu_i(t)$, $\mu_o(t)$ are not necessarily
monotone; so existence of their limits is a part of the claim. In the sequel, we will drop
``minus'' from $T-$, whenever $T<\infty$.
\end{Remark}
\begin{proof} By Corollary \ref{dzi/dt,dzo/dt},
\begin{equation}\label{d/dt(logs)}
\frac{d}{dt}\bigl(\log(1/z_i)+\log(1/z_o)\bigr)=\frac{E_i+E_o}{L}\frac{1}{\mu},\quad (t<T).
\end{equation}
Since $L(t)$ is the total number of non-isolated light vertices, we have 
\begin{align*}
E_i(t)+E_o(t)=& \sum_{a,b\atop (a,b)\neq (0,0)}\!\!\!\!(a+b)v_{a,b}+\sum_a (a+\ex[Z_o]) v_{a,\bullet}+\sum_b (b+\ex[Z_i])v_{\bullet,b}\\
\ge&\,\sum_{a,b\atop (a,b)\neq (0,0)}\!\!\!\!v_{a,b}+\sum_a v_{a,\bullet}+\sum_b v_{\bullet,b}
=L(t),
\end{align*}
because $\ex[Z_i]\ge k_1\ge 1$, $\ex[Z_o]\ge k_2\ge 1$. So, integrating the equation \eqref{d/dt(logs)},
 we obtain
\[
\log\frac{z_i(0)z_o(0)}{z_i(t)z_o(t)}\ge \int_0^t\frac{d\tau}{\mu(\tau)}.
\]
Using $\inf_t z_i(t)>0$, $\inf_t z_o(t)>0$ and $\mu(\tau)\le \mu(0)$, we conclude that
\[
t\le \mu(0)\log\frac{z_i(0)z_o(0)}{\inf_t (z_i(t)z_o(t))}\Longrightarrow T\le  \mu(0)\log\frac{z_i(0)z_o(0)}{\inf_t (z_i(t)z_o(t))}<\infty.
\]
So, by Lemma \ref{twoconst}, we have
\begin{align*}
v(T-)&=v(0)\frac{p_{k_1}(z_i(T-))p_{k_2}(z_o(T-))}{p_{k_1}(z_i(0))p_{k_2}(z_o(0))}>0,\\
\mu(T-)&=\mu(0)\frac{z_i(T-)z_o(T-)}{z_i(0)z_o(0)}>0.
\end{align*}
Finally, by definition of $T$, there exists a sequence $t_s\uparrow T$ such that $L(t_s)\to 0$.
Since $|L'(t)|=O(1)$ uniformly for $t<T$, we see then that $L(T-):=\lim_{t\uparrow T}L(t)$ exists, and it is $0$. So $\mu_i(T-)$, $\mu_o(T-)$ exist, and both are $0$.
\end{proof}
Next we will show that, subject to certain conditions on $(z_i(0),z_o(0),v(0),$ $\mu(0))$, 
the parameters $z_i(t)$, $z_o(t)$ are indeed bounded away from zero, whence the conclusion of Corollary 
\ref{ziobound}  holds. To state the result,
introduce
\[
F_k(x)=\frac{x}{p_k(x)p_{k-1}(x)},\quad (k\ge 0);
\]
so $F_0(x)=x$. Since $p_j(x)$ increases with $x$, we have that $F_k(x)/x$ decreases
with $x$. Introduce the notation $\bold r=(z_i,z_o,v,\mu)$.
\begin{Lemma}\label{1,2}
Suppose that, for some $x_1>0$, $x_2>0$,
at time $t=0$ we have 
\begin{align*}
&P(\bold r):=\frac{z_i}{p_{k_1}(z_i)}\cdot\frac{z_o}{p_{k_2}(z_o)} \cdot\frac{v}{\mu} > 
\sqrt{F_{k_1}(x_1)F_{k_2}(x_2)},\\
&\frac{z_i}{x_1},\,\frac{z_o}{x_2} \in \left(1,\frac{P^2(\bold r)}{F_{k_1}(x_1)F_{k_2}(x_2)}\right).
\end{align*}
Then $z_i(t)>x_1$, $z_o(t)>x_2$ for all $t<T$. Consequently, 
$T\le  \mu(0)\log\tfrac{z_i(0)z_o(0)}{x_1x_2}$,  and
 $v(T)>0$, $\mu(T)>0$.
  \end{Lemma}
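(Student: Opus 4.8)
The plan is to read off an invariant from Lemma \ref{twoconst}, combine it with an elementary pointwise bound valid along the whole trajectory, and then run a first-passage argument that exploits the precise shape of the hypotheses. First, note that $P(\bold r)$ is itself a conserved quantity: since $\Phi_1(\bold s)=nz_iz_o/\mu$ and $\Phi_2(\bold s)=np_{k_1}(z_i)p_{k_2}(z_o)/v$ are both constant along $\bold s(t)$, so is
\[
\frac{\Phi_1(\bold s)}{\Phi_2(\bold s)}=\frac{z_i}{p_{k_1}(z_i)}\cdot\frac{z_o}{p_{k_2}(z_o)}\cdot\frac{v}{\mu}=P(\bold r);
\]
write $P_0:=P(\bold r(0))$ for this value. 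Next I claim that at every point of the trajectory
\[
P(\bold r)\le\sqrt{F_{k_1}(z_i)\,F_{k_2}(z_o)}.
\]
Indeed, the defining relation \eqref{zi,zo;ultim} reads $z_ip_{k_1-1}(z_i)/p_{k_1}(z_i)=\ex[Z_i]=(\mu-\mu_i)/v^i$, and since $\mu_i\ge 0$ and $v^i\ge v$ (every doubly heavy vertex is in-heavy) this gives $v/\mu\le p_{k_1}(z_i)/(z_ip_{k_1-1}(z_i))$; symmetrically $v/\mu\le p_{k_2}(z_o)/(z_op_{k_2-1}(z_o))$. Multiplying these two bounds, taking a square root and substituting into the definition of $P(\bold r)$ yields the displayed inequality, with $F_k(x)=x/(p_k(x)p_{k-1}(x))$ as in the statement.

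Now suppose the conclusion fails, i.e. there is $t<T$ with $z_i(t)\le x_1$ or $z_o(t)\le x_2$. By Corollary \ref{dzi/dt,dzo/dt} the functions $z_i,z_o$ are non-increasing (and smooth) on $[0,T)$, and the hypotheses give $z_i(0)>x_1$, $z_o(0)>x_2$; hence there is a least such time $t^*\in(0,T)$, and at $t^*$ one has $z_i(t^*)=x_1$ with $z_o(t^*)\ge x_2$, or $z_o(t^*)=x_2$ with $z_i(t^*)\ge x_1$. By the symmetry $k_1\leftrightarrow k_2$, $i\leftrightarrow o$, $x_1\leftrightarrow x_2$ we may assume the first case. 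Applying the pointwise bound at $t^*$ gives $P_0^2\le F_{k_1}(x_1)F_{k_2}(z_o(t^*))$. On the other hand, recalling from the text just before the lemma that $F_k(x)/x$ is decreasing, and using successively $z_o(t^*)\ge x_2$, then $z_o(t^*)\le z_o(0)$, and then the hypothesis $z_o(0)/x_2<P_0^2/(F_{k_1}(x_1)F_{k_2}(x_2))$ (note $P(\bold r(0))=P_0$),
\[
F_{k_2}(z_o(t^*))=z_o(t^*)\cdot\frac{F_{k_2}(z_o(t^*))}{z_o(t^*)}\le\frac{z_o(t^*)}{x_2}F_{k_2}(x_2)\le\frac{z_o(0)}{x_2}F_{k_2}(x_2)<\frac{P_0^2}{F_{k_1}(x_1)}.
\]
Thus $F_{k_1}(x_1)F_{k_2}(z_o(t^*))<P_0^2$, contradicting the previous line.

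Hence no such $t^*$ exists, so $z_i(t)>x_1$ and $z_o(t)>x_2$ for all $t<T$; in particular $\inf_{t<T}z_i(t)\ge x_1>0$ and $\inf_{t<T}z_o(t)\ge x_2>0$. Lemma \ref{ziobound} then applies and yields $T\le\mu(0)\log\frac{z_i(0)z_o(0)}{\inf_t(z_i(t)z_o(t))}\le\mu(0)\log\frac{z_i(0)z_o(0)}{x_1x_2}<\infty$, together with $v(T)>0$ and $\mu(T)>0$, as claimed. The substantive steps are the pointwise inequality $P(\bold r)\le\sqrt{F_{k_1}(z_i)F_{k_2}(z_o)}$ and the realization that the oddly shaped hypothesis on $z_i(0)/x_1$ and $z_o(0)/x_2$ is exactly what is needed to push the bound from $z_o(t^*)$ back down to $x_2$ via the monotonicity of $F_{k_2}(x)/x$; once these are in hand the first-passage argument is routine, and I expect the only place requiring real care to be the bookkeeping around the regularity of $z_i(t),z_o(t)$ as $t\uparrow T$.
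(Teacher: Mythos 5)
Your proof is correct and follows essentially the same route as the paper's: derive the pointwise bound $P^2(\bold r)\le F_{k_1}(z_i)F_{k_2}(z_o)$ from $\mu_i,\mu_o\ge 0$ and $v^i,v^o\ge v$, note that $P(\bold r)=\Phi_1/\Phi_2$ is conserved, and then at the first time $z_i$ or $z_o$ reaches its threshold use the monotonicity of $F_k(x)/x$ together with $z_o(t^*)\le z_o(0)$ to contradict the hypothesis $z_o(0)/x_2<P_0^2/(F_{k_1}(x_1)F_{k_2}(x_2))$, finishing with Lemma \ref{ziobound}. The only cosmetic difference is that you spell out the conservation of $P(\bold r)$ via $\Phi_1/\Phi_2$, which the paper uses implicitly.
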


\begin{proof}
First, note that 
\begin{equation}\label{fracfrac}
\begin{aligned}
P^2(\bold r) =&\, \frac{z_i}{p_{k_1}(z_i) p_{k_1-1}(z_i)} \cdot \frac{z_o}{p_{k_2}(z_o) p_{k_2-1}(z_o)}\\
&\times \frac{\mu-\mu_i}{v^i}\, \frac{\mu-\mu_o}{v^o} \,\frac{v^2}{\mu^2}\\
 & \leq  \frac{z_i}{p_{k_1}(z_i) p_{k_1-1}(z_i)}\cdot \frac{z_o}{p_{k_2}(z_o) p_{k_2-1}(z_o)}\\
 &=F_{k_1}(z_i)F_{k_2}(z_o).
\end{aligned}
\end{equation}
Suppose $\bold r(0)$ meets the conditions of Lemma \ref{1,2};
in particular,  $z_i(0)>x_1$, $z_o(0)>x_2$. We know that $z_i(t)$ and $z_o(t)$ are decreasing along the trajectory. Suppose that for some
$t<T$ either $z_i(t)\le x_1$ or $z_o(t)\le x_2$.  Let $t_0>0$ be the smallest such $t$.  Suppose, for instance, that $z_i(t_0)=x_1$ and $z_o(t_0) \geq x_2$. 
Using constancy of  $P(\bold r(t))$ and \eqref{fracfrac},  for $t=t_0$ we have
\begin{align*}
P^2(\bold r(0))\le&\, F_{k_1}(x_1)F_{k_2}(z_o(t_0))\\
\le&\,F_{k_1}(x_1)\frac{F_{k_2}(z_o(t_0))}{z_o(t_0)}\,z_o(t_0)\\
\le&\,\frac{F_{k_1}(x_1)F_{k_2}(x_2)}{x_2}\,z_o(0),
\end{align*}
as $F_{k_2}(x)/x$ decreases. Therefore
\begin{equation*}
z_o(0)\ge x_2\frac{P^2(\bold r(0))}{F_{k_1}(x_1)F_{k_2}(x_2)},
\end{equation*}
which contradicts the condition on $z_o$ in this lemma. Thus $z_i(t)>x_1$, $z_o(t)>x_2$
for all $t<T$. Using Lemma \ref{ziobound}  we complete the proof. 
\end{proof}

\subsection{Threshold edge density for termination at a finite time} For the likely parameters coming from $D(n,m=cn)$, we have that $z_i(0) \approx c$ and $z_o(0) \approx c$ and
\begin{equation*}
P(\bold r(0))=\frac{z_i(0)}{p_{k_1}(z_i(0))}\frac{z_o(0)}{p_{k_2}(z_o(0))} \frac{v(0)}{\mu(0)} \approx c.
\end{equation*}
Motivated by these observations, we focus then on the initial states of the deletion process such that
\begin{equation}\label{motiv}
\begin{aligned}
&(1-\eps)\max\{z_i(0),z_o(0)\}\le P(\bold r(0)) \le (1+\eps)\min\{z_i(0),z_o(0)\};\\
&\qquad\qquad\qquad 1-\eps\le \frac{v(0)}{np_{k_1}(z_i(0))\,p_{k_2}(z_o(0))} \le 1+\eps,\\
&(1-\eps)n\max\{z_i(0),z_o(0)\}\le\mu(0)\le (1+\eps)n \min\{z_i(0),z_o(0)\};
\end{aligned}
\end{equation}
here $\eps\in (0,1/2)$.  The tuple $\bold r(0)=\bold r_c:=(c,c,np_{k_1}(c)p_{k_2}(c), cn)$ is definitely admissible for every
given $c$ and $\eps\in (0,1)$. Eventually we will send $\eps$ to zero. 
\begin{Corollary}\label{g-large} There exists $\gamma_0=\gamma_0(k_1,k_2)>0$ such that if the condition \eqref{motiv} is met and $P(\bold r(0))>\gamma_0$, 
then the conditions \eqref{T<oo}, \eqref{leftlimits} hold, i.e. the process terminates at a finite time.
\end{Corollary}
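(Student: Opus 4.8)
The plan is to obtain the conclusion from Lemma~\ref{1,2}: for every initial state satisfying \eqref{motiv} with $P(\bold r(0))$ large I would exhibit a pair $x_1,x_2>0$ meeting the two displayed conditions of that lemma, whereupon Lemma~\ref{1,2} (and, within its proof, Lemma~\ref{ziobound}) gives exactly \eqref{T<oo} and \eqref{leftlimits}. Write $P:=P(\bold r(0))$, $z_i^0:=z_i(0)$, $z_o^0:=z_o(0)$. The first line of \eqref{motiv} yields $\max\{z_i^0,z_o^0\}\le P/(1-\eps)$ and $\min\{z_i^0,z_o^0\}\ge P/(1+\eps)$, and since $\eps<1/2$ this confines both $z_i^0$ and $z_o^0$ to the fixed window $[\tfrac23 P,\,2P]$; in particular they are large as soon as $P$ is.

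Next I would set $x_1:=z_i^0/5$ and $x_2:=z_o^0/5$ (any fixed multiplier exceeding $4$ would serve). Then $z_i^0/x_1=z_o^0/x_2=5>1$, so the left endpoints in the second display of Lemma~\ref{1,2} hold, and both the right endpoints and the first display follow once we show $5\,F_{k_1}(x_1)F_{k_2}(x_2)<P^2$. Using $F_k(y)=y/\bigl(p_k(y)p_{k-1}(y)\bigr)$ together with $z_i^0z_o^0\le P^2/(1-\eps)^2<4P^2$,
\[
5\,F_{k_1}(x_1)F_{k_2}(x_2)=\frac{z_i^0 z_o^0}{5\,\Pi}<\frac{4P^2}{5\,\Pi},\qquad
\Pi:=p_{k_1}(x_1)\,p_{k_1-1}(x_1)\,p_{k_2}(x_2)\,p_{k_2-1}(x_2),
\]
so it is enough to force $\Pi>4/5$.

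To do that, observe that $x_1,x_2\ge \tfrac{2}{15}P$, and that $p_j(y)\uparrow 1$ as $y\to\infty$ for each fixed $j$. Hence there is $y_0=y_0(k_1,k_2)$ with $p_j(y)>(4/5)^{1/4}$ for all $y\ge y_0$ and all $j\le\max\{k_1,k_2\}$ (the indices $\le 0$ contribute a factor $1$), so that $\Pi>4/5$ whenever $\tfrac{2}{15}P\ge y_0$. Taking $\gamma_0:=\tfrac{15}{2}y_0$, every state obeying \eqref{motiv} with $P(\bold r(0))>\gamma_0$ satisfies the hypotheses of Lemma~\ref{1,2}, hence $z_i(t)>x_1>0$ and $z_o(t)>x_2>0$ for all $t<T$, and \eqref{T<oo}, \eqref{leftlimits} follow.

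The one place that needs genuine care — and the reason the estimate is packaged this way — is that $\gamma_0$ must not depend on $\eps$. This is fine because the slack actually required ($5\Pi$ exceeding $z_i^0z_o^0/P^2\le(1-\eps)^{-2}$, which is $<4$ uniformly over $\eps\in(0,1/2)$) is bounded, and because \eqref{motiv} pins $z_i^0,z_o^0$ to within a bounded factor of $P$, so ``$P$ large'' already drives the arguments of all the Poisson-tail factors to infinity. Beyond this observation the argument is the routine computation above.
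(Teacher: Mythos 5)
Your proposal is correct and, like the paper, reduces the claim to Lemma~\ref{1,2}, but it makes a genuinely different choice of the auxiliary pair $(x_1,x_2)$. You take $x_1,x_2$ \emph{proportional to the initial state} ($x_1=z_i(0)/5$, $x_2=z_o(0)/5$), so that the Poisson tail factors $p_j(x_1),p_j(x_2)$ tend to $1$ as $P(\bold r(0))\to\infty$ and the threshold emerges from requiring $\Pi:=p_{k_1}(x_1)p_{k_1-1}(x_1)p_{k_2}(x_2)p_{k_2-1}(x_2)>4/5$; this gives $\gamma_0$ only implicitly as $\tfrac{15}{2}y_0$, where $y_0$ is the point beyond which the four tails all exceed $(4/5)^{1/4}$. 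The paper instead fixes $x_1=x_2=1$, uses that $z_i(0),z_o(0)\ge\gamma/(1+\eps)>\gamma/2$ to secure $z_i(0),z_o(0)>1$, and then the interval condition of Lemma~\ref{1,2} becomes $\max\{z_i(0),z_o(0)\}\le 2P(\bold r(0))\le P^2(\bold r(0))/\bigl(F_{k_1}(1)F_{k_2}(1)\bigr)$, giving the explicit $\gamma_0=\max\{3,\,2F_{k_1}(1)F_{k_2}(1)\}$. Both routes are sound; the paper's fixed choice is shorter and yields a closed-form threshold, while yours has the minor virtue of showing that no careful tuning is needed — any scaling with a large enough multiplier works. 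One small remark: when you set up the argument you should also confirm, as the paper implicitly does, that the hypothesis $P(\bold r(0))>\sqrt{F_{k_1}(x_1)F_{k_2}(x_2)}$ follows — in your setup it does, since you are already enforcing the strictly stronger $5F_{k_1}(x_1)F_{k_2}(x_2)<P^2(\bold r(0))$.
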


\begin{proof} Pick $\gamma>0$. If $P(\bold r(0))\ge \gamma$, then by the right inequality in the
first line of \eqref{motiv},  $z_i(0),\,z_o(0)\ge
\tfrac{\gamma}{1+\eps}\ge \tfrac{\gamma}{2}$. Choose $x_1=x_2=1,$ say; then $z_i(0)>x_1$, $z_o(0)>x_2$ for $\gamma>2$.  Furthermore, the left inequality in the first line of \eqref{motiv}
and $\eps<1/2$ imply that
\[
\max\left\{\frac{z_i}{x_i},\frac{z_o}{x_2}\right\}=\max\{z_i,z_o\}\le 2P(\bold r(0))\le \frac{P^2(\bold r(0))}{F_{k_1}(x_1)F_{k_2}(x_2)},
\]
the last inequality holding provided that 
\[
P(\bold r(0))\ge 2 F_{k_1}(x_1)F_{k_2}(x_2)=2 F_{k_1}(1)F_{k_2}(1).
\]
So we can choose $\gamma_0(k_1,k_2)=\max\{3,2 F_{k_1}(1)F_{k_2}(1)\}$.
The claim then follows from Lemma \ref{1,2}.
\end{proof}
Corollary \ref{g-large} implies: if $c \ge c(k_1,k_2)$, then for the likely values of the initial state of $D(n,m=cn)$ the trajectory terminates at a finite time $T$ and the condition \eqref{leftlimits}
hold.
So we introduce 
\[
\gamma^*(\eps)=\inf\{\gamma>0:\,P(\bold r(0))\ge \gamma\text{ and }\eqref{motiv}
\Longrightarrow \eqref{T<oo}\text{ and }\eqref{leftlimits}\};
\]
by Corollary \ref{g-large}, we have $\gamma^*(\eps)\le \gamma_0(k_1,k_2)$.
\begin{Lemma}\label{g*=min} {\bf (i)\/} The infimum $\gamma^*(\eps)$ is positive, and more precisely $\gamma^*(\eps)\ge C(1-\eps)$, 
for some absolute constant $C=C(k_1,k_2)$. {\bf (ii)\/} For $\eps\le 1/2$, there exists $\bold r(0;\eps)$ such that $P(\bold r(0;\eps))=
\gamma^*(\eps)$ and $\bold r(t;\eps)$ terminates at a finite $T(\eps)=O(n)$ with $\mu(T(\eps);\eps)=\Theta(n)$, $v(T(\eps);\eps)=\Theta(n)$, with $O(n)$, $\Theta(n)$ being uniform.
\end{Lemma}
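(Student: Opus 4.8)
The plan is to use the two conservation laws of Lemma~\ref{twoconst} together with Lemma~\ref{ziobound} to pin down the value of $P(\bold r(0))$ at \emph{any} finite-time termination, and then to extract the extremal state by a compactness argument.

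\medskip
\noindent\textbf{Part (i).} I would first show that any admissible $\bold r(0)$ whose trajectory satisfies \eqref{T<oo}--\eqref{leftlimits} must have $P(\bold r(0))$ bounded below by a positive constant depending only on $k_1,k_2$. Write $z_1=z_i(T-)>0$, $z_2=z_o(T-)>0$ (positive by Lemma~\ref{ziobound}). From $L(T-)=0$ one gets $v^i(T-)=v^o(T-)=v(T-)$, and combining $\mu_i(T-)=\mu_o(T-)=0$ with \eqref{zi,zo;ultim} gives $\ex[Z_i(T-)]=\ex[Z_o(T-)]=\mu(T-)/v(T-)$. Substituting the conservation laws $\Phi_1,\Phi_2$ of Lemma~\ref{twoconst} (which express $\mu(T-)$ and $v(T-)$ through $z_1,z_2$ and the initial data) and the definition of $P(\bold r)$ into these two equalities, a short cancellation yields
\begin{equation*}
P(\bold r(0))=\frac{z_1}{p_{k_1}(z_1)\,p_{k_2-1}(z_2)}=\frac{z_2}{p_{k_1-1}(z_1)\,p_{k_2}(z_2)} ,
\end{equation*}
which already anticipates the variational principle of Lemma~\ref{c*,genk1,k2}. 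Since each $p_j\le 1$, $p_{-1}\equiv p_0\equiv 1$, $\max\{k_1,k_2\}\ge 2$, and $\inf_{z>0} z/p_k(z)>0$ for every $k\ge 1$, the right-hand sides force $P(\bold r(0))\ge C$ for an explicit $C=C(k_1,k_2)>0$. Hence every admissible $\bold r(0)$ with $P(\bold r(0))<C$ fails \eqref{T<oo}--\eqref{leftlimits}; since $\bold r_c:=(c,c,np_{k_1}(c)p_{k_2}(c),cn)$ is admissible for every $c$ and $\eps$ with $P(\bold r_c)=c$, choosing $c\in[\gamma,C)$ shows that no $\gamma<C$ lies in the defining set of $\gamma^*(\eps)$, whence $\gamma^*(\eps)\ge C\ge C(1-\eps)$ (the factor $1-\eps$ is stated only for later uniformity; the bound is in fact $\eps$-free).

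\medskip
\noindent\textbf{Part (ii).} Pick $\gamma_j\downarrow\gamma^*(\eps)$ with $\gamma_j>\gamma^*(\eps)$, and for each $j$ an admissible $\bold r_j(0)$ with $P(\bold r_j(0))=\gamma_j$ (nonempty, e.g.\ $\bold r_{\gamma_j}$). Since $\gamma_j$ exceeds the infimum $\gamma^*(\eps)$ and the defining set is upward closed, $\bold r_j(0)$ terminates at a finite $T_j$ with \eqref{leftlimits}; set $z_1^{(j)}=z_i(T_j)$, $z_2^{(j)}=z_o(T_j)$. By the identities of Part~(i), $\gamma_j=z_1^{(j)}/(p_{k_1}(z_1^{(j)})p_{k_2-1}(z_2^{(j)}))=z_2^{(j)}/(p_{k_1-1}(z_1^{(j)})p_{k_2}(z_2^{(j)}))$; as $\gamma_j$ stays in a fixed compact subset of $(0,\infty)$, the same elementary estimates as in Part~(i) (using $\max\{k_1,k_2\}\ge 2$) confine $z_1^{(j)},z_2^{(j)}$ to a fixed compact subset of $(0,\infty)$, and \eqref{motiv} keeps $z_i^{(j)}(0),z_o^{(j)}(0)$ in such a set and $v_j(0)/n,\mu_j(0)/n$ bounded away from $0$ and $\infty$. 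Passing to a subsequence, $\bold r_j(0)$ converges (after scaling the $v,\mu$ components by $n$) to an admissible $\bold r^*(0)$ with $P(\bold r^*(0))=\gamma^*(\eps)$, and $z_1^{(j)}\to z_1^*>0$, $z_2^{(j)}\to z_2^*>0$ with $\gamma^*(\eps)=z_1^*/(p_{k_1}(z_1^*)p_{k_2-1}(z_2^*))=z_2^*/(p_{k_1-1}(z_1^*)p_{k_2}(z_2^*))$.

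\medskip
It remains to show that the trajectory of $\bold r^*(0)$ has the required termination. On $[0,T^*)$, $T^*:=\sup\{t:L^*(t)>0\}$, the right-hand side of \eqref{detsystem}--\eqref{system6} is locally Lipschitz: all components are bounded, $\mu^*(t)>0$ by $\Phi_1$, and $L^*$ and $\mu^*$ are bounded away from $0$ on compact subintervals. Hence solutions depend continuously on initial data and the termination time is lower semicontinuous, so $\liminf_j T_j\ge T^*$. Fixing $t<T^*$, we have $T_j>t$ for large $j$, so by monotonicity of $z_i^{(j)}$ we get $z_i^{(j)}(t)\ge z_i^{(j)}(T_j)=z_1^{(j)}$, and letting $j\to\infty$, $z_i^*(t)\ge z_1^*$; similarly $z_o^*(t)\ge z_2^*$. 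Thus $z_i^*,z_o^*$ stay bounded away from $0$ on $[0,T^*)$, and Lemma~\ref{ziobound} gives $T^*<\infty$ with \eqref{leftlimits}. Finally $T^*\le\mu^*(0)\log\frac{z_i^*(0)z_o^*(0)}{z_i^*(T^*)z_o^*(T^*)}=O(n)$ by \eqref{T<oo}, while the conservation laws give $\mu^*(T^*)=\mu^*(0)\frac{z_i^*(T^*)z_o^*(T^*)}{z_i^*(0)z_o^*(0)}=\Theta(n)$ and $v^*(T^*)=v^*(0)\frac{p_{k_1}(z_i^*(T^*))p_{k_2}(z_o^*(T^*))}{p_{k_1}(z_i^*(0))p_{k_2}(z_o^*(0))}=\Theta(n)$, all implied constants depending only on $k_1,k_2,\eps$ (through $\gamma^*(\eps)$ and the compact sets above), hence uniform in $n$. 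Taking $\bold r(0;\eps):=\bold r^*(0)$ finishes the proof.

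\medskip
\noindent\textbf{Expected main obstacle.} The delicate step is the last paragraph: ruling out that the limiting trajectory \emph{degenerates}, i.e.\ that $z_i^*(t)$ or $z_o^*(t)\to 0$ (equivalently, that the would-be core shrinks to nothing). This is exactly where the threshold discontinuity lives; it is handled by combining continuous dependence / lower semicontinuity of the termination time with the a priori lower bound on the terminal values $z_1^{(j)},z_2^{(j)}$ of the approximating good trajectories, which itself comes from the Part~(i) identities. The conservation-law computation in Part~(i) and the compactness extraction in Part~(ii) are routine by comparison.
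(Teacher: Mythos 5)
Your proposal is essentially correct and follows the paper's strategy: use constancy of $P(\bold r(t))$ together with the terminal conditions $\mu_i=\mu_o=0$, $v^i=v^o=v$ (hence $\ex[Z_i]=\ex[Z_o]=\mu(T)/v(T)$) to derive the identity $P(\bold r(0))=\tfrac{z_1}{p_{k_1}(z_1)p_{k_2-1}(z_2)}=\tfrac{z_2}{p_{k_1-1}(z_1)p_{k_2}(z_2)}$ at any admissible finite-time termination; then extract a lower bound on $P(\bold r(0))$ and a compactness bound on the terminal $(z_1,z_2)$; then pass to the infimum by a sequential compactness argument. For part (i) your route is marginally cleaner than the paper's: the paper keeps only the inequality $\tfrac{z_i(T)}{p_{k_1}(z_i(T))}\le P(\bold r(0))$, establishes $z_i(T)\ge\min\{1,\Theta(P(\bold r(0))^{-1/(k_1-1)})\}$, and combines it with $z_i(T)\le z_i(0)\le(1-\eps)^{-1}P(\bold r(0))$ from \eqref{motiv}, which is how the factor $1-\eps$ enters; your direct $P(\bold r(0))\ge z_1/p_{k_1}(z_1)\ge\inf_{z>0}z/p_{k_1}(z)$ gives an $\eps$-free constant, which trivially dominates $C(1-\eps)$. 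Two small points you should make precise. First, the positivity $z_1,z_2>0$ is not a consequence of Lemma~\ref{ziobound} (whose hypothesis is exactly that $z_{i,o}$ are bounded below); it follows from \eqref{leftlimits} together with the integral $\Phi_1$, since $\mu(T-)>0$ forces $z_1z_2>0$. Second, in part (ii) the compactness of $z_2^{(j)}$ is immediate from your second identity only when $k_2\ge2$; for $k_2\le1$ you must argue, as the paper does, via $\psi_{k_1}(z_1^{(j)})=\psi_{k_2}(z_2^{(j)})$ with the left side $>k_1\ge2$ (since $z_1^{(j)}$ is already bounded away from $0$) while the right side tends to $k_2\le1$ if $z_2^{(j)}\to0$, a contradiction. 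Your part (ii) compactness argument — lower semicontinuity of the blow-up time plus monotonicity of $z_{i,o}$ plus Lemma~\ref{ziobound} applied to the limit trajectory — is a fleshed-out version of the paper's "standard, sequential compactness argument" and is sound.
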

\begin{proof} For every $\gamma>0$  there exists $\bold r(0)$ satisfying \eqref{motiv}
with $P(\bold r(0))=\gamma$. Let $\gamma>\gamma^*(\eps)$. By the definition of $\gamma^*(\eps)$,  the process $\bold r(t)$ terminates at a finite time $T$,
with $L(T)=v_i(T)=v_o(T)=0$, and $\mu(T)>0$, $v(T)>0$. 

{\bf (i)\/} Suppose $k_1\ge 2$. By the definition of $P(\bold r)$, and constancy of $P(\bold r(t))$ for $t\in [0,T]$, we have
\begin{equation}\label{<P(boldr0)}
\begin{aligned}
\frac{z_i(T)}{\pr(\text{Poi}(z_i(T))\ge k_1)}\le&\,\frac{z_i(T)}{\pr(\text{Poi}(z_i(T))\ge k_1)\pr(\text{Poi}(z_o(T))\ge k_2-1)}\\
=&\,P(\bold r(T))\frac{\tfrac{\mu(T)}{v(T)}}{\tfrac{z_o(T)\pr(\text{Poi}(z_o(T))\ge k_2-1)}{\pr(\text{Poi}(z_o(T))\ge k_2)}}\\
=&\,P(\bold r(T))=P(\bold r(0)).
\end{aligned}
\end{equation}
Observe that for $z_i(T)\le 1$, say,
\[
\frac{z_i(T)}{p_{k_1}(z_i(T))}=\Theta\bigl(z_i(T)^{-k_1+1}\bigr).
\]
Therefore, by \eqref{<P(boldr0)},
\begin{equation}\label{zi(T)>}
z_i(T)\ge\min\left\{\!1,\Theta\!\left(\!P(\bold r(0))^{-\tfrac{1}{k_1-1}}\!\right)\!\right\}.
\end{equation}
On the other hand, by \eqref{motiv},
\[
z_i(T)\le z_i(0)\le (1-\eps)^{-1}P(\bold r(0)).
\]
Thus
\[
(1-\eps)^{-1}P(\bold r(0))\ge \min\left\{\!1,\Theta\!\left(\!P(\bold r(0))^{-\tfrac{1}{k_1-1}}\!\right)\!\right\}.
\]
It follows easily that, for some absolute constant $C$,
\[
\gamma=P(\bold r(0))\ge \min\left\{\!(1-\eps),\,\Theta\!\left(\!(1-\eps)^{\tfrac{k_1-1}{k_1}}\right)\!\right\}\Longrightarrow \gamma^*(\eps)\ge C(1-\eps).
\]
{\bf (ii)\/} Let $\eps\le 1/2$ and $\gamma\in (\gamma^*(\eps),2\gamma^*(\eps)$.  By \eqref{motiv}, $z_i(0),\,z_o(0)$ are bounded away from both zero and infinity. So $z_i(T)$, $z_o(T)$ are
bounded away from $\infty$.  By \eqref{zi(T)>},  $z_i(T)$ is bounded away from zero. Furthermore,  $z_o(T)$ is bounded away from zero, too. Just like the case $k_1\ge 2$, this claim holds if $k_2\ge 2$. Let $k_2 \leq 1$.
We have
\[
\frac{z_i(T)\!\pr(\text{Poi}(z_i(T))\ge k_1-1)}{\pr(\text{Poi}(z_i(T))\ge k_1)}=\frac{\mu(T)}{v(T)}=
\frac{z_o(T)\!\pr(\text{Poi}(z_o(T))\ge k_2-1)}{\pr(\text{Poi}(z_o(T))\ge k_2)}.
\]
The first fraction is $k_1$ at least; in fact it exceeds $k_1$ as $z_i(T)$ is bounded away from zero. However, 
the infimum of the third fraction is $1$ at most,  if  $z_o(T)$ is not bounded away from zero.
Contradiction! 

In addition, by \eqref{motiv}, we have $\mu(T)=\Theta(n)$ and then, by the equation above, $v(T)=\Theta(n)$, both uniformly for all $\gamma$ in question.
So, by Lemma \ref{ziobound} we obtain that, uniformly again,
\[
T\le \mu(0)\log \frac{z_i(0)z_o(0)}{z_i(T)z_o(T)}=O(n).
\]
This bound is not obvious, since it relates to the differential
equations, rather than to the random deletion process itself.  A standard, sequential compactness, argument shows then existence of the limiting trajectory starting at some admissible
$\bold r(0;\eps)$ with $P(\bold r(0;\eps))=\gamma^*(\eps)$, that terminates at time $T(\eps)=O(n)$, 
with the big-Oh estimate uniform for $\eps\le 1/2$, and $z_i(T(\eps);\eps)$, $z_o(T(\eps);\eps)$ 
each bounded away from $0$. 
\end{proof}
\bigskip
The function $\gamma^*(\eps)$ is increasing as $\eps$ is decreasing, since the range of admissible $\bold r(0)$, defined in \eqref{motiv}, is shrinking. Since $\gamma^*(\eps)\le \gamma_0$, there exists a finite 
\[
0<c^*=\lim_{\eps\to 0}\gamma^*(\eps)=\lim_{\eps\to 0}z_i(0;\eps)=\lim_{\eps\to 0}z_o(0;\eps)=
n^{-1}\lim_{\eps\to 0}\mu(0;\eps).
\]
We can assume existence of a sequence $\eps^s\to
0$, such that the corresponding trajectory $\bold  r^s(t)$, $t\in [0,T(\eps^s)]$, converges to some
$\bold r^*(t)$, $t\in [0,T^*]$, where $T^*=\lim T(\eps^s)=O(n)$, $v^*(T^*)=\Theta(n)$, $\mu^*(T^*)=\Theta(n)$, and
\[
\bold r^*(0)=(z_i^*(0),z_o^*(0), v^*(0),\mu^*(0)):=(c^*,c^*, np_{k_1}(c^*)p_{k_2}(c^*), c^*n)=:\bold r_{c^*}.
\]
Thus $\bold r^*(0)$ is determined up to $c^*$. 
\bigskip

The next step is to identify $c^*$ more explicitly. Here is a preliminary discussion.
By \eqref{<P(boldr0)}, the terminal pair $(z_i,z_o):=(z_i(T),z_o(T))$ satisfies the system of two
equations,

\begin{equation}\label{zi,zo=}
\begin{aligned}
z_i=&\,P(\bold r(0))\,p_{k_1}(z_i)p_{k_2-1}(z_o),\\
z_o=&\,P(\bold r(0))\,p_{k_1-1}(z_i)p_{k_2}(z_o),
\end{aligned}
\end{equation}
or equivalently
\begin{equation}\label{system}
\begin{aligned}
\log z_i-\log p_{k_1}(z_i)-\log p_{k_2-1}(z_o)=&\,\log P(\bold r(0)),\\
\log z_o-\log p_{k_1-1}(z_i)-\log p_{k_2}(z_o)=&\,\log P(\bold r(0)).
\end{aligned}
\end{equation}
We know that this system has a solution $(z_i,z_o)=(z_i^*(T^*),z_o^*(T^*))$
for $P(\bold r(0))=c^*$.
Let $J(z_i,z_o)$ denote the Jacobian for this system, i.e. the determinant of the $2\times 2$ matrix, whose $r$-th row is the (transposed) gradient of the $r$-th LHS expression, $1\le r\le 2$.
Using
\[
\frac{d}{dz}\log p_k(z)=z^{-1}\phi_k(z),\quad \phi_k(z):=\frac{z\pr(\text{Poi}(z)=k-1)}{\pr(\text{Poi}(z)\ge k)},
\]
($\phi_{-1}(z) = \phi_0(z):=0$), we have
\[
J(z_i,z_o)=\frac{1}{z_iz_o}\text{det}\begin{pmatrix}
1-\phi_{k_1}(z_i)&-\phi_{k_2-1}(z_o)\\
-\phi_{k_1-1}(z_i)&1-\phi_{k_2}(z_o)\end{pmatrix},
\]
or
\begin{equation}\label{J=?}
\begin{aligned}
&\quad J(z_i,z_o)=\frac{H(z_i,z_o)}{z_iz_o};\\
H(z_i,z_o):=&\,\bigl(1-\phi_{k_1}(z_i)\bigr)\bigl(1-\phi_{k_2}(z_o)\bigr)-\phi_{k_1-1}(z_i)\phi_{k_2-1}(z_o).
\end{aligned}
\end{equation}
\begin{Lemma}\label{J=0}
\begin{equation}\label{H*=0}
H\bigl(z_i^*(T^*),z_o^*(T^*)\bigr)=0.
\end{equation}
\end{Lemma}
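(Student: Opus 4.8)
The plan is to exploit the extremal property of $c^*$: it is the \emph{smallest} value of $P(\bold r(0))$ for which, over the shrinking admissible family \eqref{motiv}, the trajectory still terminates at a finite time with the nondegeneracy \eqref{leftlimits}. At $c^* = \lim_\eps \gamma^*(\eps)$ we are sitting exactly on the boundary between ``terminates in finite time'' and ``does not,'' so the terminal pair $(z_i^*(T^*), z_o^*(T^*))$ must be a \emph{critical} solution of the defining system \eqref{system} with $P(\bold r(0)) = c^*$ — meaning the implicit function theorem fails there, i.e. the Jacobian $J$ vanishes, which by \eqref{J=?} is exactly $H(z_i^*(T^*), z_o^*(T^*)) = 0$.

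To make this rigorous I would argue as follows. First, recall from Lemma~\ref{g*=min}(ii) and the passage to the limit that $\bold r^*(t)$, $t \in [0, T^*]$, is a genuine trajectory of the ODE system with $P(\bold r^*(0)) = c^*$, terminating at finite $T^*$ with $\mu^*(T^*), v^*(T^*) = \Theta(n)$ and $z_i^*(T^*), z_o^*(T^*)$ bounded away from $0$ (and from $\infty$, since $z_{i,o}$ decrease). By \eqref{<P(boldr0)} and its symmetric counterpart, the terminal pair solves \eqref{system} with right-hand side $\log c^*$. Suppose for contradiction that $H(z_i^*(T^*), z_o^*(T^*)) \neq 0$, i.e. $J \neq 0$ there. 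Then the implicit function theorem applied to \eqref{system} gives: for every $c$ in a neighborhood of $c^*$ there is a solution $(z_i(c), z_o(c))$ depending continuously (indeed smoothly) on $c$, with $(z_i(c^*), z_o(c^*)) = (z_i^*(T^*), z_o^*(T^*))$, hence bounded away from $0$ for $c$ slightly \emph{below} $c^*$ as well. I would then show this forces the trajectory started from $\bold r_c$ (the canonical admissible point $(c,c,np_{k_1}(c)p_{k_2}(c),cn)$, which lies in \eqref{motiv} for every $\eps$) to also have $z_i(t), z_o(t)$ bounded away from $0$ uniformly for $t < T$ — because the conserved quantities $\Phi_1, \Phi_2$ of Lemma~\ref{twoconst} pin the terminal $z$'s to exactly the solution of \eqref{system}, and the inequality chain in \eqref{fracfrac} combined with continuity in $c$ keeps them away from $0$. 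By Lemma~\ref{ziobound} this yields finite-time termination with \eqref{leftlimits} for $c$ slightly below $c^*$, contradicting $c^* = \lim_\eps \gamma^*(\eps) = \inf$ of such $c$'s.

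The main obstacle I anticipate is the middle step: turning ``the terminal $z$'s stay bounded away from $0$ for $c < c^*$'' into ``the \emph{whole} trajectory has $z_i(t), z_o(t)$ bounded away from $0$ for $t < T$,'' since a priori the trajectory could terminate prematurely (with $L(t) \to 0$) at a point where the $z$'s have \emph{not} reached the solution of \eqref{system}, or it could fail to terminate while the $z$'s decay to $0$. This is exactly the gap Lemma~\ref{1,2} was built to close: one needs the quantitative monotonicity $F_k(x)/x$ decreasing together with the conserved $P(\bold r)$ to convert an inequality satisfied at $c^*$ into one satisfied, with room to spare, just below $c^*$. So I expect the proof to re-run the argument of Lemma~\ref{1,2} with $(x_1, x_2)$ chosen slightly below $(z_i^*(T^*), z_o^*(T^*))$, using $H \neq 0$ (equivalently, strict inequality in a suitable form of \eqref{fracfrac} at the terminal point) to get the strict separation needed. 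An alternative, possibly cleaner route is a direct variational one: $c^*$ is characterized in the next subsection as the solution of a min–max (Lemma~\ref{c*,genk1,k2}), and the vanishing of $H$ is precisely the first-order (stationarity) condition for that min–max — so one could instead prove \eqref{H*=0} by showing that $(z_i^*(T^*), z_o^*(T^*))$ is the optimizer in the variational principle and differentiating. I would present the contradiction/IFT argument as the main line, since it uses only material already in hand (Lemmas~\ref{twoconst}, \ref{ziobound}, \ref{1,2}, \ref{g*=min}), and remark on the variational interpretation.
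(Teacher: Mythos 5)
The first two paragraphs of your proposal reproduce, in essence, the ``naive attempt'' that the paper explicitly flags as insufficient at the start of its proof of Lemma~\ref{J=0}: existence of an IFT-continued solution to \eqref{system} for $c$ slightly below $c^*$ does not by itself say anything about whether the ODE trajectory launched from $\bold r_c$ actually \emph{reaches} that solution in finite time. You do correctly identify this as the gap; the problem is with your proposed fix.

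Re-running Lemma~\ref{1,2} with $(x_1,x_2)$ slightly below $(z_i^*(T^*),z_o^*(T^*))$ does not work, for two reasons. First, the claim that ``$H\neq0$ is equivalently a strict inequality in a suitable form of \eqref{fracfrac} at the terminal point'' is false: at termination one has $\mu_i=\mu_o=0$ and $v^i=v^o=v$, so the middle factors in \eqref{fracfrac} equal $1$ and \eqref{fracfrac} degenerates to the \emph{equality} $P^2(\bold r)=F_{k_1}(z_i(T))F_{k_2}(z_o(T))$ regardless of $H$; $H$ is the determinant of the linearization $B$, not a slack in \eqref{fracfrac}. Second, precisely because $P^2$ saturates $F_{k_1}F_{k_2}$ at the terminal $z$'s, the hypotheses of Lemma~\ref{1,2} are at the boundary of failure there — you need $P^2(\bold r(0))>F_{k_1}(x_1)F_{k_2}(x_2)$ and $z_{i,o}(0)/x_{1,2}<P^2/(F_{k_1}(x_1)F_{k_2}(x_2))$, but at $c=c^*$ with $x_{1,2}$ at the terminal values the right-hand side of the second condition is $1$ while $z_i(0)/x_1=c^*/z_i^*(T^*)>1$. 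There is no obvious choice of slightly smaller $(x_1,x_2)$ that restores the headroom, and the direction in which $F_{k_1}(x_1)F_{k_2}(x_2)$ moves as you perturb $(x_1,x_2)$ is exactly governed by the stationarity condition $H=0$ you are trying to refute, so the argument risks circularity. The alternative variational route you sketch is also circular in the paper's logical order: Lemma~\ref{c*,genk1,k2} uses \eqref{H*=0} and \eqref{<1,<1} as inputs.

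What the paper actually does to establish the ``stronger fact'' is a stability/linearization argument that has no counterpart in your sketch. It rewrites the ODE in coordinates $\mathcal R=(\boldsymbol\rho,v,\mu)$ with $\boldsymbol\rho=(v^i-v,v^o-v,\mu_i,\mu_o)$, observes $\sum_j\rho_j/L$ is pinned in $[1,\max(k_1,k_2)-1]$, and extracts the $4\times4$ Jacobian block $B=C-I_4$ with $\det B=H(z_i,z_o)$ (this is \eqref{matrix}--\eqref{detB(R)=}). If $H\neq0$, then since a partial limit $\boldsymbol\xi=\lim L^{-1}\boldsymbol\rho\neq\mathbf 0$ satisfies $C\boldsymbol\xi\le\boldsymbol\xi$ and $C$ is non-negative and indecomposable, Perron–Frobenius forces the spectral radius of $C$ to be strictly below $1$, producing a strictly positive left eigenvector $\boldsymbol\eta$ with $\boldsymbol\eta^TB<-\gamma\boldsymbol\eta^T$. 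A Gronwall-type estimate on $\boldsymbol\eta^T\boldsymbol\rho^{(\nu)}$, combined with continuous/equicontinuous dependence on the starting point, then forces trajectories with $P(\bold r(0))$ slightly below $c^*$ to terminate shortly after $T^*$, which is the contradiction. That dynamical ingredient — finding a positive linear functional of $\boldsymbol\rho$ that decays at a uniform rate near termination — is the missing idea in your proposal and cannot be replaced by re-invoking Lemma~\ref{1,2}.
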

\begin{proof}  Here is a naive attempt to prove this lemma. If $H\bigl(z_i^*(T^*),$
$z_o^*(T^*)\bigr)\neq 0$, then, by the implicit function theorem,  for $c$ sufficiently close to $c^*$
from {\it below\/} the system \eqref{system} has
a positive solution $z_i=z_i(c),$ $z_o=z_o(c)$. This ought to contradict the
minimality of $c^*$. However, to have a genuine contradiction we need to establish a stronger
fact. Namely that if 
$H\bigl(z_i^*(T^*),z_o^*(T^*)\bigr)\neq 0$, then for $c$ sufficiently close to $c^*$ from below
the trajectory for $P(\mathbf r(0)) =c$ also terminates at a {\it finite\/} time $T=T(c)$. 

To this end, let us show first that the condition $H\bigl(z_i^*(T^*),z_o^*(T^*)\bigr)\neq 0$ rules
out degenerate behavior of the trajectory for $P(\bold r(0))=c^*$ at $t$ close to  termination moment $T^*=T(c^*)$.

Consider, for instance, the case $k_1\ge 2$, $k_2\ge 2$. Instead of $\mathbf R=(v^i, v^o ,\mu_i,$
$\mu_o, v,\mu)$,
It is convenient to introduce $\mathbf{\mathcal R}=(\boldsymbol{\rho}, v,\mu)$, where the $4$-dimensional $\boldsymbol{\rho}:=(v^i-v,v^o-v,\mu_i,\mu_o)$. Let $v_i$ ($v_o$ resp.) be the total
number of in-light (out-light) vertices of a positive in-degree (out-degree resp.). Since
\[
v_i\le \mu_i\le (k_1-1)v_1,\quad v_o\le \mu_o\le (k_2-1)v_o,
\]
we see that for all $t\le T^*$,
\begin{equation}\label{>L>}
\sum_{j=1}^4 \rho_j\ge L\ge (\max(k_1,k_2)-1)^{-1}\sum_{j=1}^4\rho_j.
\end{equation}
The double inequality \eqref{>L>} implies that $\frac{\sum_j\rho_j(t)}{L(t)}$ is sandwiched between
$1$ and $\max(k_1,k_2)-1$. So for some sequence $t_i\to T^*$  there exists a finite
$\boldsymbol\xi=\lim_{t_i\to T^*}\boldsymbol{\rho}(t)/L(t)\ge \mathbf 0$, and $\boldsymbol\xi\neq \mathbf 0$.

The system \eqref{system6} can be
rewritten as 
\begin{equation}\label{rhosystem}
\begin{aligned}
\frac{d\mathbf{\mathcal R}}{dt}&=\frac{1}{L(t) }\mathbf F(\mathbf{\mathcal R(t)}), \quad (L(t)>0),\\
\mathbf F(\mathbf{\mathcal R})&:= A(\mathbf{\mathcal R})\boldsymbol{\rho} + 
\mathbf D(\mathbf{\mathcal R}),
\end{aligned}
\end{equation}
where $A(\mathbf{\mathcal R})$ is a $6\times 4$ matrix with $\mathbf{\mathcal R}$-dependent
entries, uniformly bounded for a given initial $\mathbf{\mathcal  R}(0)$,  and $D(\mathbf{\mathcal R})\in \Bbb R^6$ is a remainder term such that $\|D(\mathbf{\mathcal R})\|=
O\bigl(\|\boldsymbol\rho\|^2\bigr)$.  Let $B(\mathbf{\mathcal R})$
denote the $4\times 4$ submatrix of $B(\mathbf{\mathcal R})$, formed by the first $4$ rows. It
can be easily obtained that, with $I_4$ standing for the $4\times 4$ identity matrix,
\begin{equation}\label{matrix}
\begin{aligned}
B(\mathbf{\mathcal R})&=C(\mathbf{\mathcal R})- I_4,\\
C(\mathbf{\mathcal R})&=
\begin{pmatrix}k_2P(Z_o=k_2)&0&\frac{k_2P(Z_o=k_2)}{\ex[Z_o]}&0\\
0&k_1P(Z_i=k_1)&0&\frac{k_1P(Z_i=k_1)}{\ex[Z_i]}\\
0&(k_1)_2P(Z_i=k_1)&0&\frac{(k_1)_2P(Z_i=k_1)}{\ex[Z_i]}\\
(k_2)_2P(Z_o=k_2)&0&\frac{(k_2)_2P(Z_o=k_2)}{\ex[Z_0]}&0
\end{pmatrix}
\end{aligned}
\end{equation}
 Somewhat laborious computations show that
 \begin{equation}\label{detB(R)=}
\begin{aligned}
\text{det}\,B(\mathbf{\mathcal R})&=\bigl(k_1P(Z_i=k_1)-1\bigr)\bigl(k_2P(Z_o=k_2)-1\bigr)\\
&\quad-
\frac{(k_1)_2P(Z_i=k_1)}{\ex[Z_i]}\cdot \frac{(k_2)_2P(Z_o=k_2)}{\ex[Z_o]}\\
&=\bigl(\phi_{k_1}(z_i)-1\bigr)\bigl(\phi_{k_2}(z_o)-1\bigr)-\phi_{k_1-1}(z_i)\phi_{k_2-1}(z_o)\\
&= H(z_i,z_o). 
\end{aligned}
\end{equation}
Thus  Lemma \ref{J=0} asserts that the submatrix $B(\mathbf{\mathcal R(T^*)})$
is {\it singular\/}. 

For the proof by contradiction, suppose that $B(\mathbf{\mathcal R(T^*)})$ is non-singular.
Since $d\boldsymbol{\rho}/dt$ is bounded, without loss of generality there exists 
a partial $\lim_{t_i \to T^*} d\boldsymbol{\rho}/dt$, which can not have positive components.  So we obtain from 
\eqref{rhosystem} that $\boldsymbol\xi:=$ $\lim_{t_i\to T^*}$ $L^{-1}(t)\boldsymbol{\rho}(t)\neq\mathbf 0$ satisfies
\begin{equation}\label{B(T(c*))xi<0}
B(\mathbf{\mathcal R}(T^*))\boldsymbol\xi \le \bold 0\Longrightarrow C(\mathbf{\mathcal R}(T^*))\boldsymbol\xi\le\boldsymbol\xi.
\end{equation}
As the matrix $C(\mathbf{\mathcal R}(T^*))$ is non-negative, and indecomposable, we
see that $\boldsymbol\xi>\mathbf 0$.
Moreover, by Perron-Frobenius theorem (Gantmacher \cite{Gan}), the spectral
radius, i.e. the largest, necessarily positive, eigenvalue of $C(\mathbf{\mathcal R}(T^*))$, is at most $1$, hence strictly below $1$ because $\text{det}\,(C(\mathbf{\mathcal R}(T^*))-I_4)=\text{det}\,B(\mathbf{\mathcal R}(T^*))\neq 0$. 
Therefore there exist  $\boldsymbol{\eta}>\bold 0$ and $\gamma>0$ and such that, with $T$ standing for ``transpose'',
\begin{equation}\label{eta}
\boldsymbol{\eta}^TB(\mathbf{\mathcal R}(T^*))<-\gamma\,\boldsymbol{\eta}^T.
\end{equation}

We need to show that, for every $\eps>0$, there exists $\delta>0$ such that  for $P(\bold r(0))=
c\in (c^*-\delta,c^*)$  the trajectory $\mathbf{\mathcal R}(t)$ terminates no later than 
$T^*+\eps$. Otherwise there exist $\eps>0$ and a sequence $\delta_{\nu}\downarrow 0$ such
that for $P(\bold r(0))=c^*-\delta_{\nu}$ the corresponding trajectory $\mathbf{\mathcal R}^{(\nu)}(t)$ does not terminate until $T^*+\eps$. By continuous dependence on the initial condition,
$\|\mathbf{\mathcal R}^{(\nu)}(T^*)-\mathbf{\mathcal R}(T^*)\|\to 0$. In particular,
$L^{(\nu)}(T^*)\to 0$, $\boldsymbol{\rho}^{(\nu)}(T^*)\to 0$, and $\boldsymbol{\eta}^T
B(\mathbf{\mathcal R}^{(\nu)}(T^*))\le -(\gamma/2)\,\boldsymbol{\eta}$.  By 
(equi)continuity of $\mathbf{\mathcal R}^{(\nu)}(t)$, there exists $\eps_0<\eps$ such that 
\[
\boldsymbol{\eta}^TB(\mathbf{\mathcal R}^{(\nu)}(t))\le -(\gamma/3)\, \boldsymbol{\eta},
\]
for all $t\in [T^*,T^*+\eps_0]$. From \eqref{rhosystem}
\[
\frac{d\boldsymbol{\rho}^{(\nu)}}{dt}=\frac{1}{L^{(\nu)}}\bigl[B(\mathbf{\mathcal R}^{(\nu)})
\boldsymbol{\rho}^{(\nu)}+\mathbf O(\|\boldsymbol{\rho}^{(\nu)}\|^2)\bigr].
\]
Multiplying this equation by $\boldsymbol\eta^T$, and using \eqref{>L>},  we have
\begin{align*}
\frac{d (\boldsymbol{\eta}^T\boldsymbol{\rho}^{(\nu)})}{dt}&\le -\frac{\gamma}{3}
\frac{(\boldsymbol{\eta}^T\boldsymbol{\rho}^{(\nu)})}{L^{\nu}}+O\bigl(\|\boldsymbol{\rho}^{(\nu)}\|
\bigr)\\
&=-\frac{\gamma}{3}
\frac{(\boldsymbol{\eta}^T\boldsymbol{\rho}^{(\nu)})}{L^{\nu}}+O\bigl(\boldsymbol{\eta}^T\boldsymbol{\rho}^{(\nu)}\bigr)\\
&\le -a +b\,(\boldsymbol{\eta}^T\boldsymbol{\rho}^{(\nu)}),
\end{align*}
where $a$ and $b$ are constants independent of $\nu$. Integrating this differential inequality
we obtain that  that $\boldsymbol{\eta}^T\boldsymbol{\rho}^{(\nu)}(t)$ cannot be positive
for 
\begin{align*}
t &\ge T^*+b^{-1}\log \frac{1}{1-\tfrac{b}{a}\bigl(\boldsymbol{\eta}^T\boldsymbol{\rho}^{(\nu)}(T^*)\bigr)}\\
&=T^* + O\bigl(\boldsymbol{\eta}^T\boldsymbol{\rho}^{(\nu)}(T^*)\bigr)=T^*+o(1),
\end{align*}
as $\nu\to\infty$. Contradiction! The proof of Lemma \ref{J=0} is complete.
\end{proof}

For $c\ge c^*$, the trajectory terminates at a finite time $T(c)$. By continuous 
dependence of the trajectory on the starting point, $T(c+)\ge T(c)$. Further, 
for a partial limit $\xi:=\lim_{t_i\to T(c)}L^{-1}(t)\boldsymbol\rho(t)\neq \bold 0$, we have an
extension of \eqref{B(T(c*))xi<0}, namely
\begin{equation}\label{B(T(c))xi<0}
B(\mathbf{\mathcal R}(T(c)))\boldsymbol\xi \le \bold 0\Longrightarrow C(\mathbf{\mathcal R}(T^*))\boldsymbol\xi\le\boldsymbol\xi.
\end{equation}
As in the proof above, it follows that $\boldsymbol\xi>\bold 0$. So, by \eqref{detB(R)=} and
the first two inequalities in \eqref{B(T(c))xi<0}, we have: for $z_i=z_i(T(c))$, $z_o=z_o(T(c))$,
\begin{align}
H(z_i,z_o):=(1-\phi_{k_1}&(z_i))(1-\phi_{k_2}(z_o))-\phi_{k_1-1}(z_i)\phi_{k_2-1}(z_o)\ge 0,
\label{H(T(c))<=0}\\
&\phi_{k_1}(z_i)<1,\quad \phi_{k_2}(z_o)<1.\label{<1,<1}
\end{align}

\subsection{Variational characterization of  $c^*$}
 \bigskip
{\bf (a)\/} Suppose $k_2 \leq 1$. Then we have $\phi_{k_2-1}(z^*_o)=0$, and
\[
\phi_{k_2}(z_o^*)-1 \leq \frac{z_o^* e^{-z_o^*}}{1-e^{-z_o^*}}-1=\frac{z_o^*}{e^{z_o^*}-1}-1 <0.
\]
So $z_i^*$ is the unique root of
\begin{equation}\label{zi*root}
\phi_{k_1}(z)=1\Longleftrightarrow\frac{1}{\sum_{j\ge k_1}\frac{z^{j-k_1}}{j!}}=(k_1-1)!.
\end{equation}
Consequently, from the first equation in \eqref{zi,zo=},
\begin{equation}\label{c*=}
c^*=\frac{z_i^*}{p_{k_1}(z_i^*)}=\frac{z_i^*}{\pr(\text{Poi}(z_i^*)\ge k_1)}.
\end{equation}
Further, given $z_i^*$, the parameter $z_o^*$ is the unique root of
\[
\frac{z}{\pr(\text{Poi}(z) \geq k_2)}=\frac{z_i^*\pr(\text{Poi}(z_i^*)\ge k_1-1)}{\pr(\text{Poi}(z_i^*)\ge k_1)}.
\]
Notice that the equation \eqref{zi*root} means that $z_i^*$ is the absolute minimum point
of $\tfrac{z}{p_{k_1}(z)}$.  
Indeed,
\[
\left(\frac{z}{p_k(z)}\right)'=\frac{1-\phi_k(z)}{p_k(z)},
\]
and $\phi_k(z)$ decreases with $z$, with $\phi_k(0)=k$, $\phi(\infty)=0$.
So $c^*$ is also the critical value for the birth of a giant 
$(k_1+1)$-core for the {\it undirected\/} random graph $G(n,m=cn)$ (see \cite{PitSpeWor} for the {\it undirected} core phase transition).\\

{\bf (b)\/} Suppose $k_1=k_2:=k\ge 2$. In this case $z_i^*=z_o^*=:z^*$. So the equation \eqref{J=?} and Lemma \ref{J=0} imply that
\begin{equation}\label{k,k-1}
1-\phi_k(z^*)-\phi_{k-1}(z^*)=0,
\end{equation}
or equivalently
\[
\left.\frac{d}{dz}\!\left(\log z -\log p_k(z)-\log p_{k-1}(z)\right)\right|_{z=z^*}=0.
\]
So, analogously to the case $k_2=1$,  $z^*$ is an absolute minimum  point of $F_k(z):=\tfrac{z}{p_k(z)p_{k-1}(z)}$.\\

 {\bf (c)\/} Generally,
\begin{Lemma}\label{c*,genk1,k2}  Suppose $\max\{k_1, k_2\} \geq 2$. Then 
\begin{equation}\label{c*=,genk1,k2}
\begin{aligned}
c^*&=\min_{z_i,z_o}\Psi(z_i,z_o),\\
\Psi(z_i,z_o)&:=\max\left\{\frac{z_i}{p_{k_1}(z_i)p_{k_2-1}(z_o)};\,\frac{z_o}{p_{k_2}(z_o)p_{k_1-1}(z_i)}\right\}.
\end{aligned}
\end{equation}
\end{Lemma}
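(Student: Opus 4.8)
The plan is to establish the two inequalities $c^*\le\min_{z_i,z_o}\Psi(z_i,z_o)$ and $c^*\ge\min_{z_i,z_o}\Psi(z_i,z_o)$ separately. Throughout write $g_1(z_i,z_o):=z_i/\bigl(p_{k_1}(z_i)p_{k_2-1}(z_o)\bigr)$ and $g_2(z_i,z_o):=z_o/\bigl(p_{k_2}(z_o)p_{k_1-1}(z_i)\bigr)$, so that $\Psi=\max\{g_1,g_2\}$. The inequality $c^*\ge\min\Psi$ is already in hand: the limiting terminal pair $(z_i^*,z_o^*):=\bigl(z_i^*(T^*),z_o^*(T^*)\bigr)$ solves \eqref{zi,zo=} with $P(\bold r^*(0))=c^*$, and this says exactly that $g_1(z_i^*,z_o^*)=g_2(z_i^*,z_o^*)=c^*$; hence $\Psi(z_i^*,z_o^*)=c^*$ and therefore $\min\Psi\le c^*$.

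For the reverse inequality I would first locate the minimizer of $\Psi$. I would treat the case $k_1,k_2\ge2$ in full; the cases $\min\{k_1,k_2\}\le1$ reduce, via $c^*(k_1,k_2)=c^*(k_2,k_1)$, to the explicit one-dimensional minimizations carried out in part {\bf (a)} above, once one checks that the right side of \eqref{c*=,genk1,k2} there collapses to $\min_z z/p_{\max\{k_1,k_2\}}(z)$. For $k_1,k_2\ge2$ one has $g_1\to\infty$ as $z_i\to0$, $g_1\ge z_i\to\infty$ as $z_i\to\infty$, and symmetrically for $g_2$ in $z_o$, so $\Psi$ is proper on $(0,\infty)^2$ and attains its minimum at an interior point $(\hat z_i,\hat z_o)$; set $\hat c:=\min\Psi=\Psi(\hat z_i,\hat z_o)$, which by the previous paragraph is $\le c^*$. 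Using $\tfrac{d}{dz}\log p_k(z)=z^{-1}\phi_k(z)$ one computes $\nabla\log g_1=\bigl(z_i^{-1}(1-\phi_{k_1}(z_i)),\,-z_o^{-1}\phi_{k_2-1}(z_o)\bigr)$ and $\nabla\log g_2=\bigl(-z_i^{-1}\phi_{k_1-1}(z_i),\,z_o^{-1}(1-\phi_{k_2}(z_o))\bigr)$. If one of $g_1,g_2$ strictly exceeded the other at $(\hat z_i,\hat z_o)$ its gradient would vanish there, forcing $\phi_{k_2-1}(\hat z_o)=0$ or $\phi_{k_1-1}(\hat z_i)=0$, impossible for $k_1,k_2\ge2$; so $g_1(\hat z_i,\hat z_o)=g_2(\hat z_i,\hat z_o)=\hat c$, i.e.\ $(\hat z_i,\hat z_o)$ lies on the ridge $R:=\{g_1=g_2\}$, which equals $\{\psi_{k_1}(z_i)=\psi_{k_2}(z_o)\}$ since $g_1/g_2=\psi_{k_1}(z_i)/\psi_{k_2}(z_o)$. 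Minimality of the maximum then forces $\bold 0\in\mathrm{conv}\{\nabla g_1,\nabla g_2\}$ at $(\hat z_i,\hat z_o)$, i.e.\ $\lambda\nabla\log g_1+(1-\lambda)\nabla\log g_2=\bold 0$ for some $\lambda\in[0,1]$; the values $\lambda\in\{0,1\}$ are excluded as above, and the two components of this identity read $\lambda\bigl(1-\phi_{k_1}(\hat z_i)\bigr)=(1-\lambda)\phi_{k_1-1}(\hat z_i)$ and $(1-\lambda)\bigl(1-\phi_{k_2}(\hat z_o)\bigr)=\lambda\phi_{k_2-1}(\hat z_o)$, whence $\phi_{k_1}(\hat z_i)<1$, $\phi_{k_2}(\hat z_o)<1$ and, on multiplying and cancelling $\lambda(1-\lambda)>0$, exactly $H(\hat z_i,\hat z_o)=0$ with $H$ as in \eqref{J=?}. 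On the other hand, by Lemma \ref{J=0}, by \eqref{<1,<1}, and by the fact $(z_i^*,z_o^*)\in R$ (which follows from $\mu_i(T^*)=\mu_o(T^*)=0$, $L(T^*)=0$ and \eqref{zi,zo;ultim}), the terminal pair $(z_i^*,z_o^*)$ enjoys these same three properties: it lies on $R$, it lies in the region $\Omega:=\{\phi_{k_1}(z_i)<1,\ \phi_{k_2}(z_o)<1\}$, and $H$ vanishes there.

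To finish I would show that $H$ vanishes at exactly one point of $R$; then $(\hat z_i,\hat z_o)=(z_i^*,z_o^*)$, hence $\hat c=c^*$, and substituting $\min\Psi=c^*$ into the definition of $\Psi$ gives \eqref{c*=,genk1,k2}. Parametrize $R$ by $t=\psi_{k_1}(z_i)=\psi_{k_2}(z_o)$, so that $z_i=\psi_{k_1}^{-1}(t)$, $z_o=\psi_{k_2}^{-1}(t)$ both increase with $t$; as $t$ grows $R$ is swept through the three stretches (I) $\{\phi_{k_1}(z_i)\ge1,\ \phi_{k_2}(z_o)\ge1\}$, then (II) the set where exactly one of these holds, then (III) $\Omega$. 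On (II) one has $(1-\phi_{k_1}(z_i))(1-\phi_{k_2}(z_o))\le0<\phi_{k_1-1}(z_i)\phi_{k_2-1}(z_o)$, so $H<0$; on (I) the elementary bound $\phi_k(z)-\phi_{k-1}(z)<1$ for $z>0$ --- equivalently, positivity of the variance of the truncated Poisson, cf.\ \eqref{E[Z]grows}, \eqref{Var(Z)} --- gives $\tfrac{\phi_{k_1}(z_i)-1}{\phi_{k_1-1}(z_i)}\cdot\tfrac{\phi_{k_2}(z_o)-1}{\phi_{k_2-1}(z_o)}<1$, i.e.\ again $H<0$; and on (III), where $1-\phi_{k_1}(z_i)$ and $1-\phi_{k_2}(z_o)$ are both positive and all the $\phi$'s strictly decreasing, $\partial_{z_i}H=-\phi_{k_1}'(z_i)\bigl(1-\phi_{k_2}(z_o)\bigr)-\phi_{k_1-1}'(z_i)\phi_{k_2-1}(z_o)>0$ and likewise $\partial_{z_o}H>0$, so --- together with $dz_i/dt,\,dz_o/dt>0$ --- the map $t\mapsto H$ is strictly increasing on (III); since it is $<0$ at the left endpoint of (III) and tends to $1$ as $t\to\infty$, it has exactly one zero there and none on (I)$\cup$(II). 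The main obstacle is precisely this sign analysis of $H$ along the ridge; everything else is routine bookkeeping, and what makes the analysis go through is once more the ``surprisingly simple, yet very useful'' variance identity of \eqref{E[Z]grows}--\eqref{Var(Z)}.
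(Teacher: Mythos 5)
Your proof is correct and follows essentially the same route as the paper: you show the minimizer of $\Psi$ lies on the ridge $\{g_1=g_2\}$ (equivalently $\ex[Z_i]=\ex[Z_o]$), that the minimizer satisfies $H=0$ together with $\phi_{k_1}<1$, $\phi_{k_2}<1$, and that $H$ has a unique zero on the ridge in this region, which must coincide with $(z_i^*,z_o^*)$ by Lemma \ref{J=0} and \eqref{<1,<1}. The only difference is cosmetic: you obtain the stationarity conditions at once from the minimax first-order condition $\mathbf 0\in\mathrm{conv}\{\nabla g_1,\nabla g_2\}$, whereas the paper differentiates $\log g_1$ along the implicitly defined curve $z_o=F(z_i)$ and reads off the same factor $H$.
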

\begin{proof}  The case $\min\{k_1,k_2\} \leq 1$ was effectively covered in the item {\bf (a)\/}.
So let us assume that $k_1>1$ and $k_2>1$. Since the function $\Psi(z_i,z_o)\to\infty$ for $\min(z_i,z_o)\to 0$ or $\max(z_i,z_0)$ $\to\infty$, it attains the infimum at an interior point $(\hat z_i,\hat z_o\}$. Since $\tfrac{z_i}{p_{k_1}(z_i)p_{k_2-1}(z_o)}$ and $\tfrac{z_o}{p_{k_2}(z_o)p_{k_1-1}(z_i)}$ are strictly decreasing as functions of
$z_o$ and $z_i$ respectively, we have
\[
\frac{\hat z_i}{p_{k_1}(\hat z_i)p_{k_2-1}(\hat z_o)}=\frac{\hat z_o}{p_{k_2}(\hat z_o)p_{k_1-1}(\hat z_i)}.
\]
Equivalently, for $Z_i$ and $Z_o$ being conditioned on the events $\{\text{Poi}(z_i) \geq k_1\}$ and $\{\text{Poi}(z_o) \geq k_2\}$, respectively, we have that
$\ex[Z_i]=\ex[Z_o]$ at $z_i=\hat z_i$, $z_o=\hat z_o$. The latter condition definitely holds for $z_i=z_i^*$,
$z_o=z_o^*$. Now  
\begin{equation}\label{d/dzE}
\frac{d}{dz_i}\ex[Z_i]=\frac{\text{Var}(Z_i)}{z_i}>0,\quad \frac{d}{dz_o}\ex[Z_o]=
\frac{\text{Var}(Z_o)}{z_o}>0.
\end{equation}
So the condition $\ex[Z_i]=\ex[Z_o]$ determines, implicitly, $z_o=F(z_i)$, where $F(z_i)$ is
continuously differentiable, and $F'(z_i)>0$.  By implicit differentiation,
\begin{equation}\label{F'(zi)=}
F'(z_i)=\frac{z_o}{z_i}\,\frac{1+\phi_{k_1-1}(z_i)-\phi_{k_1}(z_i)}{1+\phi_{k_2-1}(z_o)-\phi_{k_2}(z_o)},
\end{equation}
with the numerator and the denominator being both {\it positive\/}, see \eqref{Var(Z)}. Then 
\begin{multline}\label{J/(1+)}
z_i\frac{d}{dz_i}\left(\log \frac{z_i}{p_{k_1}(z_i)p_{k_2-1}(z_o)}\right)=1-\phi_{k_1}(z_i)-
\frac{z_i}{z_o}\phi_{k_2-1}(z_o)F^\prime(z_i)\\
=1-\phi_{k_1}(z_i)-\phi_{k_2-1}(z_o)\,\frac{1+\phi_{k_1-1}(z_i)-\phi_{k_1}(z_i)}{1+\phi_{k_2-1}(z_o)-\phi_{k_2}(z_o)}\\
=\frac{(1-\phi_{k_1}(z_i))(1-\phi_{k_2}(z_o))-\phi_{k_1-1}(z_i)\phi_{k_2-1}(z_o)}
{1+\phi_{k_2-1}(z_o)-\phi_{k_2}(z_o)}\\
=\frac{H(z_i,z_o)}{1+\phi_{k_2-1}(z_o)-\phi_{k_2}(z_o)}.
\end{multline}
Since
\[
\phi_{k_1}(z_i)-1<\phi_{k_1-1}(z_i),\quad \phi_{k_2}(z_o)-1<\phi_{k_2-1}(z_o),
\]
the function $H(z_i,z_o)$ is negative when at least one of $1-\phi_{k_1}(z_i)$, $1-\phi_{k_2}(z_o)$ is non-positive. Therefore $\hat z_i$, $\hat z_o$ satisfy
\begin{equation}\label{sat}
1-\phi_{k_1}(z_i)>0,\quad 1-\phi_{k_2}(z_o)>0.
\end{equation}
By \eqref{<1,<1} we know that the last inequalities hold for $z_i^*$, $z_o^*$. 
Since each $\phi_j(z)$ decreases with $z$, the function $H(z_i,z_o)$ increases
with $z_i$ as long as $(z_i,z_o=F(z_i))$ satisfy \eqref{sat}. So the condition $H(z_i^*,z_o^*)=0$ means that $(z_i^*,z_o^*)$ is the unique absolute
minimum point of $\tfrac{z_i}{p_{k_1}(z_i)p_{k_2-1}(z_o)}$, subject to constraint $\ex[Z_i]=
\ex[Z_o]$.
Hence $\hat z_i=z_i^*$, $\hat z_o=z_o^*$.
\end{proof}

For $c\ge c^*$, the terminal $z_i=z_i(T(c))$, $z_o=z_o(T(c))$ satisfy
\begin{equation}\label{two}
c=\frac{z_i}{p_{k_1}(z_i)p_{k_2-1}(z_o)}=\frac{z_o}{p_{k_1-1}(z_i)p_{k_2}(z_o)},
\end{equation}
So $(z_i,z_o)\neq (z_i^*,z_o^*)$ for $c>c^*$; consequently $H(z_i,z_o)\neq 0$, and whence
$H(z_i,z_o)>0$ by \eqref{H(T(c))<=0}. Arguing as in the proof of Lemma \ref{J=0} we obtain then that $T(c+)\le T(c)$, implying that $T(c+)=T(c)$.\\
Computationally, 
given $c\ge c^*$, $z_{i,o}(c)$ can be obtained as the limit of the monotone decreasing
sequence $z_{i,o}^{(r)}$
defined recursively as follows:  $z_i^{(0)}=z_o^{(0)}=c$, and for $r>0$, 
\begin{equation}\label{recur}
\begin{aligned}
z_i^{(r)}&=cp_{k_1}(z_i^{(r-1)})\,p_{k_2-1}(z_o^{(r-1)}),\\
z_o^{(r)}&=cp_{k_2}(z_o^{(r-1)})\,p_{k_1-1}(z_i^{(r-1)}).
\end{aligned}
\end{equation}

The interested reader may notice a conceptual similarity of the equations \eqref{recur} to the equations (2.1) in \cite{PitSpeWor}, suggested there as a heuristic, branching process related, explanation of the formula for the threshold value fo the edge density for birth of the $k$-core in the undirected case. Later Riordan \cite{Rio} found a proof of the threshold based on this approach. While it is not difficult to ``explain" the equations \eqref{recur}, just like those equations in \cite{PitSpeWor}, we decided to stick with our deletion process. Local nature of the one-step transition enabled us to describe, succinctly, the process as a Markov chain, with tractable expected state changes at each step. With the ODE system as a possible deterministic approximation, we were led to identification of the critical $c^*$, as a candidate for the $(k_1, k_2)$-core threshold. In the next, final, section we will prove the approximation property, thereby rigourously proving that $c^*$ is the threshold edge density for existence of a giant $(k_1, k_2)$-core in the graph $D(n,m)$.

\section{$c^*$ is the threshold edge density for a $(k_1k_2)$-core in $D(n,m)$}
For the starting digraph $D(n,m=[cn])$, w.h.p. the initial state $\bold s(0)$ is such that,
with $\delta_n:=n^{-1/2}\log n$,
\begin{equation}\label{s(0)=}
\begin{aligned}
v_{a,b}(0)&=(1+O(\delta_n)) n\pr(Z_i(c)=a)\pr(Z_o(c)=b),\quad a<k_1,\,b<k_2,\\
v_{a,\bullet}(0)&=(1+O(\delta_n)) n\pr(Z_i(c)=a)\pr(Z_o(c)\ge k_2),\quad a<k_1,\\
v_{\bullet,b}(0)&=(1+O(\delta_n)) n\pr(Z_i(c)\ge k_1)\pr(Z_o(c)=b),\quad b<k_2,\\
v(0)&=(1+O(\delta_n)) n\pr(Z_i(c)\ge k_1)\pr(Z_o(c)\ge k_2),\\
\mu(0)&=m=[cn].
\end{aligned}
\end{equation}
The random deletion process $\{\bold s(t)\}$ stops at 
\[
\tau:=\min\{t:\,\mu_i(t)=\mu_o(t)=0\}=\min\{t:\,L(t)=0\}. 
\]
\subsection{Supercritical case} 
\begin{Theorem}\label{super} If $c>c^*$ then w.h.p. the deletion
process delivers a $(k_1,k_2)$-core of size asymptotic to $np_{k_1}(z_i)p_{k_2}(z_0)$,
with about $n\tfrac{z_iz_o}{c}$ edges, and the parameters $z_i$, $z_0$ determined as the
limit of the recurrence \eqref{recur}.
\end{Theorem}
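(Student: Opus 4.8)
The plan is to show that the random deletion chain $\{\bold s(t)\}$ follows the deterministic trajectory of \eqref{detsystem} and then read off the terminal value of $v$ and $\mu$ using the conservation laws of Lemma \ref{twoconst}. For $c>c^*$ we already know (the discussion following Lemma \ref{J=0}, together with \eqref{H(T(c))<=0}--\eqref{<1,<1}) that the deterministic trajectory with $P(\bold r(0))=c$ terminates at a \emph{finite} time $T(c)=O(n)$, with $\mu(T(c)),v(T(c))=\Theta(n)$, and that the termination is \emph{non-degenerate}: $H(z_i,z_o)>0$ and $\phi_{k_1}(z_i),\phi_{k_2}(z_o)<1$ at $t=T(c)$, so by \eqref{detB(R)=} the submatrix $B(\mathcal R(T(c)))=C(\mathcal R(T(c)))-I_4$ is nonsingular and, $C$ being nonnegative and indecomposable, $C(\mathcal R(T(c)))$ has Perron eigenvalue strictly below $1$. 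By Lemma \ref{twoconst}, since $z_i(0)=z_o(0)=c$ and $v(0)=np_{k_1}(c)p_{k_2}(c)$, $\mu(0)=cn$ for the initial state \eqref{s(0)=}, the terminal values are $v(T(c))=np_{k_1}(z_i)p_{k_2}(z_o)$ and $\mu(T(c))=nz_iz_o/c$, with $(z_i,z_o)=(z_i(T(c)),z_o(T(c)))$ the solution of \eqref{two}, i.e.\ the limit of the monotone recursion \eqref{recur}.

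\textbf{Step 1: Wormald tracking away from termination.} First I would invoke Wormald's differential equation method for the chain $\{\bold s(t)\}$ (after the harmless truncation of steps with $X_u+Y_u>\log^2n$, whose total probability is $e^{-\Omega(\log^2n\log\log n)}$): Lemma \ref{6eqtns} and the detailed version supply the one-step drifts with additive error $O(\eps_n)=O(n^{-1}\log^6n)$, the per-step increments are $O(\log^2n)$ w.h.p., and the right-hand sides of \eqref{detsystem} are Lipschitz on the region $\mathcal D_\delta=\{L\ge\delta n,\ v\ge\delta n,\ \mu-\mu_i-k_1v^i\ge\delta n,\ \mu-\mu_o-k_2v^o\ge\delta n\}$, where moreover $z_i,z_o=O(\log n)$ are bounded away from $0$. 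Thus, for each fixed $\delta>0$, letting $\sigma_\delta$ be the first time the deterministic trajectory leaves $\mathcal D_\delta$ (which by non-degeneracy happens at $\sigma_\delta=T(c)-\Theta(\delta n)\to T(c)$ as $\delta\to0$), w.h.p.\ the deletion process is still running at $\sigma_\delta$ and $\max_t\|\bold s(t)-\bar{\bold s}(t)\|_\infty=O(n^{1/2}\log^3n)=o(n)$ for all $t\le\sigma_\delta$. In particular $L(\sigma_\delta)=(\delta+o(1))n$, while $v(\sigma_\delta)=\bar v(T(c))+O(\delta n)+o(n)$, $\mu(\sigma_\delta)=\bar\mu(T(c))+O(\delta n)+o(n)$, and $z_i,z_o$ at time $\sigma_\delta$ are within $O(\delta)$ of their terminal values.

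\textbf{Step 2: the endgame — the main obstacle.} The remaining task, and the delicate point, is to show that from $\sigma_\delta$ the process terminates within $O(\delta n)$ further steps. Here I would mimic the Perron--Frobenius argument in the proof of Lemma \ref{J=0}. Since $C(z_i,z_o)$ depends on $\mathcal R$ only through $(z_i,z_o)$, continuity of eigenvalues gives a \emph{fixed} vector $\boldsymbol\eta>\bold 0$ and $\gamma>0$ with $\boldsymbol\eta^TC(z_i,z_o)\le(1-\gamma)\boldsymbol\eta^T$ whenever $(z_i,z_o)$ is within $O(\delta)$ of its terminal value, $\delta$ small. Set $M(t)=\boldsymbol\eta^T\boldsymbol\rho(t)\ge0$, so $M\asymp L$ near termination and $M=0$ exactly at termination. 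Writing the drift of $\boldsymbol\rho$ via \eqref{rhosystem}--\eqref{matrix}, the genuinely quadratic remainder is $O(\|\boldsymbol\rho\|^2/n)$, whence
\begin{equation*}
\ex\bigl[M(t+1)-M(t)\,\big|\,\bold s(t)\bigr]\le \frac{1}{L}\,\boldsymbol\eta^TB\boldsymbol\rho+O\!\left(\tfrac{M}{n}\right)+O(\eps_n)\le -\gamma\,\frac{M}{L}+O(\delta)\le -c_0<0
\end{equation*}
throughout the region described by $\mathcal D_\delta^c\cap\{v,\mu=\Theta(n)\}$, for a constant $c_0>0$ and $\delta$ small. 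Thus $M(t)$ is a supermartingale with constant negative drift and $O(\log^2n)$ increments; a standard optional-stopping/Azuma estimate shows it stays $O(\delta n)$ and reaches $0$ within $O(\delta n)$ steps w.h.p., the process never leaving the region in the meantime (as $v,\mu$ change by only $O(\delta n)$ there). Hence $\tau-\sigma_\delta=O(\delta n)$ w.h.p., where $\tau=\min\{t:L(t)=0\}$.

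\textbf{Step 3: conclusion.} Since each deletion step only removes edges, both $v(t)$ (number of doubly heavy vertices) and $\mu(t)$ are monotone non-increasing along the random process. Combining with Steps 1--2, $\bar v(T(c))+O(\delta n)+o(n)=v(\sigma_\delta)\ge v(\tau)\ge v(\sigma_\delta)-O(\delta n)=\bar v(T(c))+O(\delta n)+o(n)$ (the lower bound using $\ex[v(t)-v(t+1)\mid\bold s(t)]=O(1)$ over the $O(\delta n)$ endgame steps), and likewise for $\mu$. Letting $n\to\infty$ and then $\delta\to0$ gives $v(\tau)=(1+o(1))\,np_{k_1}(z_i)p_{k_2}(z_o)$ and $\mu(\tau)=(1+o(1))\,nz_iz_o/c$ w.h.p. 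At time $\tau$ there are no non-isolated light vertices, so every non-isolated vertex has in-degree $\ge k_1$ and out-degree $\ge k_2$; by the standard fact that deleting a light vertex does not change the $(k_1,k_2)$-core, the terminal digraph equals the $(k_1,k_2)$-core of $D$ plus isolated vertices. This core is nonempty since $v(\tau)=\Theta(n)>0$ w.h.p.\ (consistent with the dichotomy implied by \eqref{akc bound}, which in turn shows the process never approaches the early-stopping threshold). Finally, all statements proved here for the surrogate $D_{\bold x}$ transfer to $D(n,m)$ through \eqref{4}. The hard part is Step 2: making the deterministic non-degeneracy ($H>0$, spectral radius $<1$) yield a genuine uniform negative drift for a concrete supermartingale on the random endgame, and ensuring the process does not escape the small neighbourhood before terminating.
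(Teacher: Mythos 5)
Your strategy is the same as the paper's: Wormald ODE tracking plus a Perron--Frobenius supermartingale endgame plus the conservation laws to read off terminal $v$, $\mu$. The difference is in where you stop the tracking and how long the endgame is, and I think the paper's version is the one that actually compiles cleanly, so it is worth spelling out the contrast.

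The paper does not include any lower bound on $L$ in the tracking region $\bold S_\omega$: it only requires $\mu-\mu_{i,o}-k_{1,2}v^{i,o}\ge\omega$ and $v\ge n/\log n$, and the key observation is that at the ODE termination time $T_0$ these quantities are still $\Theta(n)$ even though $\bar L(T_0)=0$. This lets Wormald run all the way to $T_0$, with error $O(\log^6n)$, so that w.h.p.\ only $O(\log^6n)$ light vertices remain. The endgame is then only $\Theta(\log^7 n)$ steps, and the paper finishes it with the \emph{exponential} supermartingale $M(t)=\exp\bigl(u(t-T_0)\sigma/2\bigr)\exp\bigl(u(r(t)-r(T_0))\bigr)$, where $r=\boldsymbol\eta^T\boldsymbol\rho$; optional stopping then gives a super-polynomially small failure probability, and the whole argument needs to control the state over only $O(\log^7n)$ steps. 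You instead impose $L\ge\delta n$ on the tracking region, so the tracking stops a macroscopic $\Theta(\delta n)$ steps short of $T(c)$, and your endgame runs for $\Theta(\delta n)$ steps on the linear supermartingale $M(t)=\boldsymbol\eta^T\boldsymbol\rho(t)$ with Azuma. This is not wrong, but it is substantially more delicate than you let on: you must simultaneously show that $M$ decreases to $0$ and that during those $\Theta(\delta n)$ steps the state does not exit the small neighbourhood where the Perron--Frobenius bound $\boldsymbol\eta^T C\le(1-\gamma)\boldsymbol\eta^T$, the $v,\mu=\Theta(n)$ assumption, and the $O(\|\boldsymbol\rho\|^2/(Ln))$ remainder estimate all hold. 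That requires its own bootstrap/stopping-time argument over a number of steps comparable to the total running time, which you have essentially acknowledged but not supplied, whereas the paper sidesteps it entirely by making the endgame logarithmically short.

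On the positive side, your version is in one respect cleaner: in the region $\mathcal D_\delta$ all $1/L$ terms are uniformly bounded, so verifying the Lipschitz/Trend/Boundedness hypotheses of Wormald's theorem is routine; the paper's choice to track up to $\bar L=0$ requires one to check that quantities like $E_o/L$, $v_{a,b}/L$ remain well-behaved as $L\downarrow 0$, which is true but less transparent. Your Step 3, using monotonicity of $v$ and $\mu$ along the random process and a diagonalization in $\delta$, is a perfectly good way to pass to the stated asymptotics once Steps 1--2 are settled.

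Two small points to fix. The quadratic remainder in \eqref{rhosystem} should be taken as $O(\|\boldsymbol\rho\|^2/n)$ (as you wrote) rather than the $O(\|\boldsymbol\rho\|^2)$ the paper writes, so that after dividing by $L$ one gets the paper's $O(n^{-1}\log^{18}n)$; your bookkeeping is consistent with this. And you should be explicit that $\boldsymbol\rho\ge\bold 0$ component-wise and $\boldsymbol\eta>\bold 0$, so $M\ge 0$ and $M=0$ is equivalent to $L=0$; otherwise the claim that Azuma drives $M$ to $0$ does not by itself yield termination of the deletion process.
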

\smallskip

\begin{proof}
{\bf (a)\/} Consider the ODE trajectory $\bar {\bold s}_c(t)$,
i.e. the solution of \eqref{detsystem}, 
that starts at $\bold s(0)=\bold s_c(0)$ given by \eqref{s(0)=}, with the factor $1+O(\delta_n)$ dropped and $\mu(0)=cn$. We know that, for $c>c^*$,  the trajectory terminates at  a finite time 
$T(\bold s_c(0)):=T(c)=\Theta(n)$, and that 
\begin{equation}\label{H>0}
H\bigl[\bar z_i(T(\bold s_c(0))),\bar z_o(T(\bold s_c(0)))\bigr]>0.
\end{equation}
Since the RHS of  \eqref{detsystem} is a zero-degree homogeneous vector-function of $\bold s$,
independent of $t$, we can scale both the state $\bold s$ and the time $t$ by $n$. Using \eqref{H>0}
and an argument similar to the proof of Lemma \ref{J=0}, we obtain that for every $\bold s(0)$
satisfying \eqref{s(0)=}, the ODE trajectory terminates at a finite time $T(\bold s(0))$ such
that  $|T(\bold s(0))-T(\bold s_c(0))|=O(n^{1/2}\log n)$. In particular, it follows from \eqref{H>0}
that 
\begin{equation}\label{infH>0}
\inf\bigl\{H\bigl[\bar z_i(T(\bold s(0))),\bar z_o(T(\bold s(0)))\bigr]:\,\bold s(0)\text{ meets }\eqref{s(0)=}
\bigr\}>0.
\end{equation}
It is convenient to extend the definition of 
$\bar {\bold s}(t)$, setting $\bar {\bold s}(t)\equiv \bar {\bold s}(T(\bold s(0)))$ for
$t\ge T(\bold s(0))$. 

{\bf (b)\/} Introduce $\bold S_{\omega}$, the set of all $\bold s$ meeting the constraints
\begin{equation}\label{Somega}
\mu-\mu_{i,o} -k_{1,2}v^{i,o}\ge \omega,\quad v\ge \frac{n}{\log n}.
\end{equation}
where $L=L(\bold s)$ is the total number of non-isolated light vertices, and $\omega=\omega(n)\to\infty$ however slowly. Every $\bold s(0)$, satisfying \eqref{s(0)=}, certainly belongs to
$\bold S_{\omega}$. For the deletion process $\bold s(t)$, let $\tau_{\omega}$ be the first $t\le\tau$ such that $\bold s(t)\notin \bold S_{\omega}$. Denoting 
the (explicit part of) RHS of \eqref{edvab}-\eqref{Edmu} by $\bold f(\bold s)$, we have: for 
$t<\tau_{\omega}$,
\begin{equation}\label{trend}
\ex[\bold s(t+1) - \bold s(t)\,|\bold s(t)]=\bold f(\bold s(t))+O(n^{-1}\log^6n).
\end{equation}
In addition, by \eqref{X,Ysmall},
\begin{equation}\label{smallchange}
\pr(\|\bold s(t+1)-\bold s(t)\|\le \log^2 n\,\bigr|\, \bold s(t)) \ge 1 -e^{-\log^2 n}.
\end{equation}
The attendant ODE system \eqref{detsystem} is 
\[
\frac{d\bar{\bold s}(t)}{dt}=\bold f(\bar{\bold s}(t)),\quad t<T(\bold s(0)).
\]
By a general purpose theorem due to Wormald, \cite{Wor1}, with probability
\[
1-O\left(ne^{-0.5(\log^2 n)\log\log n}+n^{1/3}e^{-\log^2 n}\right)\ge 1- e^{-0.5\log^2n},
\]
we have:  
\[
\max\bigl[\|\bold s(t) -\bar{\bold s}(t)\|\,:\, t\le \tau_{\omega}(\bold s(0))\bigr]
 =O(\log^6 n).
\]
 where $\tau_{\omega}(\bold s(0)):=\min\{T_0,\tau_{\omega}\}$, and $T_0:=
 \lceil T(\bold s(0))\rceil$.
 (The conditions \eqref{trend}, \eqref{smallchange} are particular examples of  ``Trend hypothesis'' and ``Boundedness hypothesis'' in \cite{Wor1}.) 
 
 On the ODE trajectory $\bar z_{i,o}(\tau_{\omega}(\bold s(0)))\ge \bar z_{i,o}(T(\bold s(0))$,  because $\bar z_{i,o}(t)$ are decreasing with $t$.  Likewise
 $\bar \mu(\tau_{\omega}(\bold s(0)))$ and $\bar v(\tau_{\omega}(\bold s(0)))$ are both of order $n$. From the definition
 of $z_{i,o}$, it follows then that $\bar\mu -\bar\mu_{i,o} -k_{1.2}\bar{v}^{\,i,0}$ are both of order $n$ at $\tau_{\omega}(\bold s(0))$. So if $\tau_{\omega}<T_0$ then 
necessarily $L(\tau_{\omega})=0$. If, on the other hand,  $\tau_{\omega}\ge T_0$, then 
 $\bar L(\tau_{\omega}(\bold s(0))=\bar L(T_0)=0$. 
Therefore with probability $\ge 1- e^{-0.5\log^2n}$ we either end up (1)
with a giant $(k_1,k_2)$-core at $\tau_{\omega}$ or (2) with $O(\log^6 n)$ light vertices at $T_0$.

Consider the second alternative. Let $\tau'$ be the first $t> T_0$ such that either $L(t)=0$ or
 $\|\bold s(t) -\bar{\bold s}(t)\|\ge \log^9 n$; obviously $\tau'<\infty$.  In light of
\eqref{smallchange},  we have
\[
\|\bold s(\tau') -\bar{\bold s}(\tau')\|\le  2\log^9 n,\quad \forall\,t\le \tau',
\]
except on the event of probability $< e^{-\log ^2 n}$.
Furthermore,  again by \eqref{smallchange} and 
$\|\bold s(\tau_{\omega})-\bar{\bold s}(T_0)\|=O(\log^6 n)$,  we
have
\begin{equation}\label{diff}
\pr\bigl(\{L(\tau')\neq 0\}\setminus \{\tau'-T_0 =\Omega(\log^7 n)\}\,|\,\bold s(T_0)\bigr)\le e^{-\log^2 n}.
\end{equation}
What follows next is a probabilistic, discrete-time, counterpart of our proof of Lemma 3.1. 
Let $t\in [T_0,\tau']$, so that $\bar{\bold s}(t)=\bar{\bold s}(T(\bold s(0)))$. Using the notations $\bold{\mathcal R}=(\boldsymbol\rho,v,\mu)$, where $\boldsymbol\rho=(v^i-v,v^o-v,\mu_i,\mu_0)$, with a little work we obtain from \eqref{system6}:
\begin{equation}\label{rho(t+1)-rho(t)}
\ex[\boldsymbol\rho(t+1)-\boldsymbol\rho(t)\,|\bold s(t)]=\frac{1}{L(t)}B
\boldsymbol\rho(t)+\bold O\bigl(n^{-1}\log^{18}n\bigr).
\end{equation}
The $4\times 4$ matrix $B=C(\bold{\mathcal R})-I_4$ is defined in \eqref{matrix}, and $\bold{\mathcal R}=\bold{\mathcal R}(T(\bold s(0)))$, which is the terminal value of $(v^{i,o},\mu_{i,o},v,\mu)$
for the ODE trajectory that starts at $\bold s(0)$.  

The equation \eqref{infH>0} implies existence of $\boldsymbol\eta>\bold 0$, and $\gamma>0$ such that, analogously to \eqref{eta},  we have $\boldsymbol\eta^T B\le -\gamma\eta^T$, ($T$
standing for  ``transpose''),  for every $\bold s(0)$ meeting the constraints \eqref{s(0)=}. Let $r(t)=\boldsymbol\eta^T \boldsymbol\rho(t)$. By the left inequality in \eqref{>L>},  it follows from \eqref{rho(t+1)-rho(t)} that
\[
\ex[r(t+1)-r(t)\,|\bold s(t)]\le -\frac{\gamma}{2}\cdot \frac{r(t)}{L(t)} +O\bigl(n^{-1}\log^{18}n\bigr)\le
-\sigma,
\]
where $\sigma>0$ is fixed. We emphasize that this bound holds for all $t\in [T_0,\tau']$.
 
The rest is short. By \eqref{smallchange},  it follows easily that for a fixed, sufficiently small, $u>0$,
and $t\in [t_0,\tau'-1]$
\[
\ex\bigl[\exp\bigl(u(r(t+1)-r(T_0)\bigr)\,|\bold s(t)\bigr]
\le e^{-u\sigma/2}\exp\bigl(u(r(t)- r(T_0)\bigr).
\]
Therefore 
\[
M(t):=\exp\bigl(u(t-T_0)\sigma/2\bigr)\,\exp\bigl(u(r(t)- r(T_0))\bigr),\quad t\in [T_0,\tau'],
\]
is a super-martingale, whence  (Durrett \cite{Dur}, Section 4.7)
\[
\ex[M(\tau')\,|\,\bold s(T_0)]\le M(T_0)=1.
\]
Now, on the event $\{L(\tau')\neq 0\}$ we have $\tau' -T_0=\Omega(\log^7 n)$. So from
\begin{multline*}
1\ge \ex[M(\tau')\,|\,\bold s(T_0)]\ge e^{-ur(T_0)}\ex\bigl[\exp\bigl(u(t-T_0)\sigma/2\bigr)\,|\,\bigr]\\
=\exp\bigl(-O(\log^6 n)+\Omega(\log^7 n)\bigr)\!\!\pr\bigl(\tau'-T_0 =\Omega(\log^7 n)\,|\,\bold s(T_0)\bigr)
\end{multline*}
we get
\[
\pr\bigl(\tau'-T_0 =\Omega(\log^7 n)\,|\,\bold s(T_0)\bigr)\le \exp\bigl(-\Omega(\log^7 n)\bigr),
\]
implying, via \eqref{diff}, that 
\[
\pr\bigl(L(\tau')\neq 0\,|\,\bold s(T_0)\bigr)\le e^{-0.5\log^2 n}. 
\]
\end{proof}

\subsection{Subcritical case}
\begin{Theorem}\label{sub} If $c<c^*$ then w.h.p. the $(k_1,k_2)$-core is empty.
\end{Theorem}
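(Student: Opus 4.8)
The plan is to run the same deterministic-trajectory/Wormald-approximation machinery as in the supercritical case, but now exploit that for $c<c^*$ the ODE trajectory starting from the likely initial state of $D(n,m=[cn])$ drives the leading parameters down to the regime where the $(k_1,k_2)$-core must be empty. Concretely, recall from Section 3 that $\gamma^*(\eps)$ is increasing as $\eps\downarrow 0$ with $\lim_{\eps\to 0}\gamma^*(\eps)=c^*$; hence for $c<c^*$ we may fix $\eps\in(0,1/2)$ small enough that $c<\gamma^*(\eps)$, and then the initial tuple $\bold r_c=(c,c,np_{k_1}(c)p_{k_2}(c),cn)$ satisfies \eqref{motiv} but \emph{fails} the sufficient condition for finite-time termination. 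The first step is therefore a complementary analysis of the ODE system \eqref{detsystem}: show that for $P(\bold r(0))=c<c^*$ the trajectory does \emph{not} terminate at a finite time, and in fact $L(t)\to 0$ only as $t\to\infty$, with $z_i(t),z_o(t)\downarrow 0$. Using the conservation laws of Lemma \ref{twoconst}, $z_i(t)z_o(t)/\mu(t)$ and $p_{k_1}(z_i(t))p_{k_2}(z_o(t))/v(t)$ stay constant, so as $z_{i,o}(t)\to 0$ we get $v(t)=\Theta\bigl(z_i(t)^{k_1}z_o(t)^{k_2}\bigr)\to 0$ and $\mu(t)=\Theta(z_i(t)z_o(t))\to 0$; that is, the ODE solution runs all the way down to the trivial (empty-core) state.

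The second step is to transfer this to the random process. Let $\bold s(0)$ be the likely initial state of $D(n,m=[cn])$ from \eqref{s(0)=}. Run the Wormald approximation (the general-purpose theorem of \cite{Wor1}, exactly as quoted in the proof of Theorem \ref{super}) on the set $\bold S_\omega$ of \eqref{Somega}: with probability $\ge 1-e^{-0.5\log^2 n}$ we have $\max_{t\le \tau_\omega(\bold s(0))}\|\bold s(t)-\bar{\bold s}(t)\|=O(\log^6 n)$, where now $\tau_\omega$ is the first time $\bold s(t)$ leaves $\bold S_\omega$. Since the ODE trajectory $\bar{\bold s}(t)$ eventually makes $\bar\mu-\bar\mu_{i,o}-k_{1,2}\bar v^{i,o}$ drop below any fixed multiple of $\omega(n)$ (because these quantities tend to $0$ along $\bar{\bold s}$), and makes $\bar v$ drop below $n/\log n$, the random process is pushed, w.h.p., to a state $\bold s(t_1)$ with either $v(t_1)<n/\log n$ or $\mu-\mu_{i,o}-k_{1,2}v^{i,o}$ small — in any case a state with $v(t)$ much smaller than $\alpha(k,c)n$.

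The third step invokes the a priori structural bound from Section 2.1: for $k=\max\{k_1,k_2\}\ge 2$, with probability $1-O(n^{-k+3/2})$ the $(k_1,k_2)$-core of $D(n,m=[cn])$ is either empty or has at least $0.9\,\alpha(k,c)\,n$ vertices, where $\alpha(k,c)$ is defined in \eqref{akc bound}. Once the deletion process has reached a state whose heavy-vertex count $v(t)$ has fallen below $0.8\,\alpha(k,c)n$, continuing it can only end with the empty digraph (this is exactly the stopping rule noted after \eqref{akc bound}). Combining: w.h.p. the trajectory enters that regime, so w.h.p. the $(k_1,k_2)$-core is empty, which is the claim. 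One can also bypass the $\alpha(k,c)$ bound entirely by pushing the ODE/Wormald approximation down to a state with $v(t),\mu(t)=o(n)$ and $z_i(t),z_o(t)=o(1)$, then noting directly that such a sub-digraph is too sparse to contain any sub-digraph with minimum in- and out-degree $\ge1$ with $\max\{k_1,k_2\}\ge 2$, but using \eqref{akc bound} is cleaner.

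The main obstacle is the ODE analysis in the first step: one must verify that for $c<c^*$ the trajectory genuinely reaches the degenerate state (rather than, say, approaching a nontrivial fixed point). The cleanest route is via Corollary \ref{dzi/dt,dzo/dt} and \eqref{d/dt(logs)}: as long as $L(t)>0$ we have $\tfrac{d}{dt}(\log(1/z_i)+\log(1/z_o))=\tfrac{E_i+E_o}{L\mu}\ge \tfrac1\mu\ge \tfrac1{\mu(0)}$, so $z_i(t)z_o(t)\to 0$ is forced unless $L(t)$ hits $0$ in finite time; and the latter is ruled out precisely because $c<\gamma^*(\eps)$ means the sufficient conditions of Lemma \ref{1,2} fail in a way that (as in the argument establishing $\gamma^*(\eps)$) prevents $z_{i,o}$ from staying bounded away from $0$ — equivalently, the terminal system \eqref{zi,zo=} with $P(\bold r(0))=c<c^*$ has no positive solution, by the variational characterization in Lemma \ref{c*,genk1,k2}. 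Turning this "no positive terminal solution $\Rightarrow$ trajectory degenerates" implication into a clean, self-contained argument (including the care needed when $\min\{k_1,k_2\}\le 1$, where $z_o$ need not stay positive) is the delicate part; the probabilistic transfer in steps two and three is then routine given the tools already developed for Theorem \ref{super}.
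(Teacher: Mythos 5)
Your proposal follows essentially the same route as the paper: argue that for $c<c^*$ the ODE trajectory degenerates (so that $\mu(t),v(t)\to 0$ via the conservation laws $\Phi_1,\Phi_2$), transfer this to the random deletion process by Wormald's theorem, and then finish with the \L uczak-type bound \eqref{akc bound} ruling out small nonempty cores. The only difference is that you spell out (via $\tfrac{d}{dt}\log\tfrac{1}{z_iz_o}\ge\tfrac{1}{\mu(0)}$ and the non-solvability of \eqref{zi,zo=} for $c<c^*$ from Lemma \ref{c*,genk1,k2}) why the trajectory must degenerate, a step the paper merely asserts.
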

\begin{proof} This time for the ODE solution with $\bold s(0)$ meeting constraints \eqref{s(0)=}
we have that $\min\{z_i(t),z_o(t)\}\to 0$. Since the ODE system \eqref{detsystem} has two
explicit integrals, 
\[
\Phi_1(\bold s)=\frac{ z_iz_o}{\mu/n},\quad \Phi_2(\bold s)=
\frac{p_{k_1}( z_i)p_{k_2}(z_o)}{v/n},
\]
we obtain that $\mu(t) \to 0$, whence $v(t) \to 0$, as $t\to\infty$. Thus, given any $b<p_{k_1}(c)p_{k_2}(c)$, there exists
a time $T_b\approx a(b)n$ such that $v(T_b)=bn$. Applying Wormald's theorem we see that w.h.p. in the random deletion process after about $T_b$ steps the total number of the remaining doubly heavy
vertices is sharply concentrated around $bn$ vertices. Since $b>0$ is arbitrary, we obtain that w.h.p. the number of vertices in the $(k_1, k_2)$-core is below $0.8 \alpha(k,c) n$, see \eqref{akc bound} for $\alpha(k,c)$. By the directed counterpart of \L uczak's result \cite{Luc1} for $G(n,p=c/n)$, it follows that w.h.p. the $(k_1, k_2)$-core of $D(n, m=[cn])$ is empty.
\end{proof}

\end{document}